\theoremstyle{definition}
\newtheorem{thm}{Theorem}[section]
\newtheorem{pro}[thm]{Proposition}
\newtheorem{cor}[thm]{Corollary}
\newtheorem{lem}[thm]{Lemma}
\newtheorem{rem}[thm]{Remark}
\newtheorem*{mainthm}{Theorem A}
\newtheorem*{mainthm2}{Theorem B}
\theoremstyle{definition}
\begin{document}
\title[Equivariant absorption for the Razak-Jacelon algebra]
{Equivariant Kirchberg-Phillips type absorption for the Razak-Jacelon algebra}
\author{Norio Nawata}
\address{Department of Pure and Applied Mathematics, Graduate School of Information Science and Technology, Osaka University, Yamadaoka 1-5, Suita, Osaka 565-0871, Japan}
\email{nawata@ist.osaka-u.ac.jp}
\keywords{Razak-Jacelon algebra; Amenable group action; Kirchberg's central sequence C$^*$-algebra; 
Kirchberg-Phillips type absorption}
\subjclass[2020]{Primary 46L55, Secondary 46L35; 46L40}
\thanks{This work was supported by JSPS KAKENHI Grant Number 20K03630}

\begin{abstract}
Let $A$ and $B$ be simple separable nuclear monotracial C$^*$-algebras, and let $\alpha$ and $\beta$ 
be strongly outer actions of a countable discrete amenable group $\Gamma$ on 
$A$ and $B$, respectively. In this paper, we show that $\alpha\otimes\mathrm{id}_{\mathcal{W}}$ 
on $A\otimes\mathcal{W}$ and 
$\beta\otimes\mathrm{id}_{\mathcal{W}}$ on $B\otimes\mathcal{W}$ are cocycle conjugate where 
$\mathcal{W}$ is the Razak-Jacelon algebra. 
Also, we characterize such actions by using 
the fixed point subalgebras of Kirchberg's central sequence C$^*$-algebras.
\end{abstract}
\maketitle

\section{Introduction}

The Razak-Jacelon algebra $\mathcal{W}$ is a simple separable nuclear monotracial 
$\mathcal{Z}$-stable C$^*$-algebra which is $KK$-equivalent to $\{0\}$. 
By classification results \cite{CE} and \cite{EGLN} (see also \cite{Na4}), 
we see that such a C$^*$-algebra is unique. 
The Razak-Jacelon algebra $\mathcal{W}$ is an important example of a simple separable nuclear 
stably projectionless C$^*$-algebra. 
We refer the reader to \cite{EGLN0}, \cite{EGLN}, \cite{GL2}, \cite{GL3} 
and \cite{L} for recent progress of the classification of simple separable nuclear 
stably projectionless C$^*$-algebras. 
See, for example, \cite{Ell}, \cite{J},  \cite{KK1}, \cite{Raz} and \cite{Tsa} for concrete constructions of 
$\mathcal{W}$ and simple stably projectionless C$^*$-algebras. 
We can regard $\mathcal{W}$ as a stably finite analog of the 
Cuntz algebra $\mathcal{O}_2$ generated by two isometries. 
Kirchberg and Phillips showed in \cite{KP} that a simple separable nuclear unital C$^*$-algebra $B$ is 
isomorphic to $\mathcal{O}_2$ if and only if there exists an asymptotically central 
unital homomorphism from $\mathcal{O}_2$ to $B$, that is, there exists a unital homomorphism 
from $\mathcal{O}_2$ to the central sequence C$^*$-algebra $B^{\omega}\cap B^{\prime}$ of $B$. 
In particular, if $A$ is a simple separable nuclear unital C$^*$-algebra, then 
$A\otimes\mathcal{O}_2$ is isomorphic to $\mathcal{O}_2$. 
As an analog of this result, 
we see that if $A$ is a simple separable nuclear monotracial C$^*$-algebra, then 
$A\otimes\mathcal{W}$ is isomorphic to $\mathcal{W}$ 
by classification results \cite{CE} and \cite{EGLN}. 
In \cite{Na4}, the author characterized $\mathcal{W}$ by using properties of 
Kirchberg's central sequence C$^*$-algebra $F(\mathcal{W})$ of $\mathcal{W}$. 
Indeed, he showed that a simple separable nuclear monotracial C$^*$-algebra $B$ is isomorphic to 
$\mathcal{W}$ if and only if $B$ satisfies the following properties: \ \\
\ \\
(i) for any $\theta\in [0,1]$, there exists a projection $p$ in $F(B)$ such that 
$\tau_{B, \omega}(p)=\theta$, \ \\
(ii) if $p$ and $q$ are projections in $F(B)$ such that $0<\tau_{B, \omega}(p)=\tau_{B, \omega}(q)$, 
then  $p$ is Murray-von Neumann equivalent to $q$,  \ \\
(iii) there exists an injective homomorphism from $B$ to $\mathcal{W}$, \ \\
\ \\
where $\tau_{B,\omega}$ is the induced tracial state on $F(B)$ by the unique tracial state on $B$. 

The classification of group actions on operator algebras is 
one of the central themes in the theory of operator algebras. 
We refer the reader to \cite{I}, \cite{MT}, \cite{Sza} and the references given 
there for details and previous results. 
We shall recall only some results that are directly related to this paper. 
In \cite{I1}, Izumi showed that an outer action $\beta$ of a finite group on $\mathcal{O}_2$ has the 
Rohlin property if and only if there exists a unital homomorphism from $\mathcal{O}_2$ to 
the fixed point subalgebra $(\mathcal{O}_2^{\omega}\cap\mathcal{O}_2^{\prime})^{\beta}$. 
In particular, if $\alpha$ is an outer action of a finite group on a simple separable nuclear 
unital C$^*$-algebra, then $\alpha\otimes\mathrm{id}_{\mathcal{O}_2}$ on $A\otimes\mathcal{O}_2$ 
has the Rohlin property. 
Since Rohlin actions of a finite group on $\mathcal{O}_2$ are unique up to conjugacy, 
we can regard this result as an equivariant version of the Kirchberg-Phillips type absorption 
for outer actions of finite groups. 
As an analog of this result, the author showed that if $\alpha$ is a strongly outer action of a 
finite group on a simple separable nuclear monotracial C$^*$-algebra, then 
$\alpha\otimes\mathrm{id}_{\mathcal{W}}$ on $A\otimes\mathcal{W}$ has the 
Rohlin property in \cite{Na3}. 
In \cite{Sza4}, Szab\'o generalized Izumi's result to actions of 
countable discrete amenable groups. 
For any countable discrete amenable group $\Gamma$, 
he gave the model action $\delta^{\Gamma}$ of $\Gamma$ on $\mathcal{O}_2$ 
such that if $\alpha$ is an action of $\Gamma$ on a unital Kirchberg algebra $A$, 
then $\alpha\otimes \delta^{\Gamma}$ on $A\otimes\mathcal{O}_2$ is strongly cocycle conjugate 
to $\delta^{\Gamma}$. 
Also, he showed that if $\alpha$ is an outer action of $\Gamma$ 
on a simple separable nuclear unital C$^*$-algebra $A$, then 
$\alpha\otimes\mathrm{id}_{\mathcal{O}_2}$ is strongly cocycle conjugate to $\delta^{\Gamma}$. 
Note that  Szab\'o's results are based on strongly self-absorbing actions in 
\cite{Sza3}, \cite{Sza1}, \cite{Sza1c} and \cite{Sza2}. 
Moreover, Suzuki generalized Szab\'o's  result to amenable actions (or actions with the quasi-central 
approximation property (QAP)) of countable discrete exact groups in \cite{Su}. 

In this paper, we consider an equivariant version of the Kirchberg-Phillips type absorption 
for strongly outer actions of countable discrete amenable groups on $\mathcal{W}$. 
In particular, one of the main results is the following theorem. 

\begin{mainthm} (Corollary \ref{cor:main1}) \ \\
Let $A$ and $B$ be simple separable nuclear monotracial C$^*$-algebras, and let $\alpha$ and $\beta$ 
be strongly outer actions of a countable discrete amenable group $\Gamma$ on 
$A$ and $B$, respectively. Then $\alpha\otimes\mathrm{id}_{\mathcal{W}}$ on $A\otimes\mathcal{W}$ 
is cocycle conjugate to $\beta\otimes\mathrm{id}_{\mathcal{W}}$ on $B\otimes\mathcal{W}$.
\end{mainthm}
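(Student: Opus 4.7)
The plan is to prove Theorem A via an equivariant version of the intrinsic characterization of $\mathcal{W}$ recalled in the introduction. The essential observation is that both $\alpha\otimes\mathrm{id}_{\mathcal{W}}$ on $A\otimes\mathcal{W}$ and $\beta\otimes\mathrm{id}_{\mathcal{W}}$ on $B\otimes\mathcal{W}$ are strongly outer $\Gamma$-actions on C$^*$-algebras isomorphic to $\mathcal{W}$ (using the absorption $A\otimes\mathcal{W}\cong\mathcal{W}$ for simple separable nuclear monotracial $A$), so it suffices to establish a uniqueness statement: any two strongly outer $\Gamma$-actions of the form $\gamma\otimes\mathrm{id}_{\mathcal{W}}$ on such a tensor product are cocycle conjugate. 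Accordingly, I would first prove an abstract characterization theorem (Theorem B in the paper) and then obtain Theorem A as a direct corollary.

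First, I would upgrade the characterization (i)--(iii) of $\mathcal{W}$ to its equivariant counterpart. For a strongly outer action $\gamma$ of $\Gamma$ on a simple separable nuclear monotracial C$^*$-algebra $C$, the goal is to show that the fixed point subalgebra $F(C\otimes\mathcal{W})^{\gamma\otimes\mathrm{id}_{\mathcal{W}}}$ of Kirchberg's central sequence C$^*$-algebra carries projections of every trace value in $[0,1]$ and that any two projections of equal positive trace are Murray--von Neumann equivalent inside that fixed point subalgebra. Strong outerness of $\gamma$ should give the induced $\Gamma$-action on $F(C\otimes\mathcal{W})$ enough freeness to allow pushing the projections provided by property (i) of \cite{Na4} into the fixed point subalgebra via an averaging/tracial Rohlin-type argument over F\o lner sets, with amenability of $\Gamma$ controlling the approximation errors.

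Next, I would combine this equivariant (i)--(ii) with an equivariant embedding statement (the analog of (iii), likely obtained by tensoring an embedding $A\otimes\mathcal{W}\hookrightarrow\mathcal{W}$ with the model action $\delta^{\Gamma}$ built in previous work and then using strong outerness to move it into the fixed point subalgebra). Once this is in place, cocycle conjugacy of $\alpha\otimes\mathrm{id}_{\mathcal{W}}$ and $\beta\otimes\mathrm{id}_{\mathcal{W}}$ should follow from a one-sided approximate intertwining argument in the spirit of Evans--Kishimoto and Kirchberg's $\varepsilon$-test: at each step one constructs, using the equivariant projection structure just produced, unitary cocycles living in $F(\,\cdot\,)^{(\cdot)}$ that implement approximate equivariant isomorphisms between finite subsets of $A\otimes\mathcal{W}$ and $B\otimes\mathcal{W}$, and then passes to the limit to obtain a cocycle conjugacy.

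The main obstacle I anticipate is the first step: producing an adequate equivariantly invariant projection structure inside $F(C\otimes\mathcal{W})^{\gamma\otimes\mathrm{id}_{\mathcal{W}}}$ for an arbitrary countable discrete amenable group. Strong outerness does not by itself supply a genuine Rohlin tower in the stably projectionless setting, and the F\o lner averaging must be delicately compatible with the tracial conditions underlying (i) and (ii). I expect this to require a tracial Rohlin/Ocneanu--Matui--Sato type technique adapted to $\mathcal{W}$, likely relying on $\mathcal{W}$-stability of $C\otimes\mathcal{W}$ and on the uniqueness of tracial states to control trace values of the averaged projections. Once this equivariant ingredient is in hand, the remainder of the argument -- the embedding step and the intertwining -- should proceed in close parallel to Szab\'o's treatment of $\mathcal{O}_2$ in \cite{Sza4}, with $\mathcal{O}_2$ replaced by $\mathcal{W}$ and unitality replaced throughout by the monotracial hypothesis.
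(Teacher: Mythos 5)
Your top-level reduction is the same as the paper's: both $\alpha\otimes\mathrm{id}_{\mathcal{W}}$ and $\beta\otimes\mathrm{id}_{\mathcal{W}}$ are strongly outer actions on copies of $\mathcal{W}$ (via $A\otimes\mathcal{W}\cong\mathcal{W}$), their crossed products are again isomorphic to $\mathcal{W}$ by classification, and Theorem A is deduced from the characterization theorem (Theorem B) once properties (i) and (ii) are verified for $F(A\otimes\mathcal{W})^{\alpha\otimes\mathrm{id}_{\mathcal{W}}}$. However, your plan for the key equivariant step has a genuine gap. You propose to obtain invariant projections by pushing the projections of $F(A\otimes\mathcal{W})$ into the fixed point subalgebra by a F\o lner averaging or tracial Rohlin argument; averaging a projection over a group orbit does not yield a projection, and no Rohlin tower is available here. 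In fact this is not where the difficulty lies: the paper gets property (i) essentially for free, via the trace-preserving unital homomorphism $\Theta([(x_n)_n])=[(h_n\otimes x_n)_n]$ from $F(\mathcal{W})$ into $F(A\otimes\mathcal{W})^{\alpha\otimes\mathrm{id}_{\mathcal{W}}}$, whose range is automatically invariant because the action is trivial on the $\mathcal{W}$ tensor factor. The genuinely hard part is property (ii) -- Murray--von Neumann equivalence of equal-trace projections \emph{inside the fixed point subalgebra} -- which requires equivariant strict comparison of $F(\Phi(A),A\otimes\mathcal{W})^{\alpha\otimes\mathrm{id}_{\mathcal{W}}}$. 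That in turn rests on equivariant property (SI) (Matui--Sato/Szab\'o techniques), on the factoriality of the fixed-point von Neumann central sequence algebra obtained from Ocneanu's Rohlin theorem and cohomology vanishing at the level of $\pi_{\tau}(B)''$, and on a stable uniqueness theorem. None of this is supplied or even identified in your outline.

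A second gap concerns the intertwining itself. An Elliott/Evans--Kishimoto-type argument needs not only an existence theorem (approximate cocycle morphisms in both directions, which the paper builds from a Schafhauser-style purely large extension to embed $\mathcal{W}$ into $(B^{\omega}\cap B')^{\beta}$) but also a uniqueness theorem: the paper proves that every trace-preserving sequential asymptotic cocycle endomorphism of $(\mathcal{W},\gamma)$ is inner, using Connes' $2\times 2$ matrix trick and comparison in the relative central sequence algebra $F(\Phi(\mathcal{W}),M_2(\mathcal{W}))^{\beta^{\omega}}$, and this is exactly what properties (i) and (ii) are used for. Your proposal asserts that "unitary cocycles living in $F(\cdot)^{(\cdot)}$" will implement the approximate conjugacies but gives no mechanism for producing them; without the innerness/uniqueness statement the intertwining cannot be closed up. So while the architecture you describe is the right one, the two load-bearing ingredients -- equivariant comparison of projections and the uniqueness theorem -- are missing, and the one concrete technique you do propose (F\o lner averaging of projections) would not work.
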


Note that we cannot expect an analog of Suzuki's generalization in \cite{Su} for $\mathcal{W}$
because no action of a countable discrete non-amenable exact group 
on $\mathcal{W}$ is amenable (or QAP). Also, we must assume that actions are 
strongly outer. 

For any countable discrete (amenable) group $\Gamma$, there exists a strongly outer action 
of $\Gamma$ on the uniformly hyperfinite (UHF) algebra $M_{2^{\infty}}$ 
of type $2^{\infty}$. Indeed, the Bernoulli shift action $\mu^{\Gamma}$ on 
$\bigotimes_{g\in \Gamma}M_{2^{\infty}}\cong M_{2^{\infty}}$ is such an action. 
Therefore we can regard $\mu^{\Gamma}\otimes\mathrm{id}_{\mathcal{W}}$ as a model action 
on $M_{2^{\infty}}\otimes\mathcal{W}\cong \mathcal{W}$. 
We obtain the theorem above as a corollary of 
the following characterization of such actions by using the fixed point subalgebras of Kirchberg's 
central sequence C$^*$-algebras, which is also one of the main results. 

\begin{mainthm2} (Theorem \ref{thm:main2}) \ \\
Let $\gamma$ be a strongly outer action of a countable discrete amenable group $\Gamma$ on 
$\mathcal{W}$. Then $\gamma$ is cocycle conjugate to 
$\mu^{\Gamma}\otimes\mathrm{id}_{\mathcal{W}}$ on $M_{2^{\infty}}\otimes \mathcal{W}$ 
if and only if $(\mathcal{W}, \gamma)$ satisfies the following properties: \ \\
\ \\
(i) for any $\theta\in [0,1]$, there exists a projection $p$ in 
$F(\mathcal{W})^{\gamma}$ such that $\tau_{\mathcal{W}, \omega}(p)=\theta$, \ \\
(ii) if $p$ and $q$ are projections in $F(\mathcal{W})^{\gamma}$ 
such that $0<\tau_{\mathcal{W}, \omega}(p)=\tau_{\mathcal{W}, \omega}(q)$, 
then  $p$ is Murray-von Neumann equivalent to $q$, \ \\
(iii) There exists an injective homomorphism from 
$\mathcal{W}\rtimes_{\gamma}\Gamma$ to $\mathcal{W}$.
\end{mainthm2}

Our main technical tool of the proof of the theorem above is Szab\'o's approximate cocycle 
intertwining argument in \cite{Sza7} (see also \cite{Ell2}). 
This is an equivariant version of Elliott's approximate intertwining argument \cite{Ell0}.
For this argument, we need to show a uniqueness type theorem and an existence type theorem in 
a suitable sense. 
Using properties of fixed point subalgebras of usual central sequence C$^*$-algebras
and Kirchberg's relative central sequence C$^*$-algebras, 
we obtain these theorems. 
Note that our analysis of fixed point subalgebras of usual central sequence C$^*$-algebras
and Kirchberg's relative central sequence C$^*$-algebras 
are essentially based on Sato's results \cite{Sa2}, Szab\'o's results \cite{Sza6} 
and Matui-Sato's results \cite{MS}, \cite{MS2}, \cite{MS3}. 
In particular, techniques around property (SI) is important. 
See also \cite{Sa0}, \cite{Sa} for the pioneering works and \cite{BBSTWW} for generalizations. 

This paper is organized as follows. In Section \ref{sec:pre}, we collect notations and definitions.
In Section \ref{sec:fixed}, we shall consider properties of fixed point subalgebras of usual 
central sequence C$^*$-algebras and Kirchberg's relative central sequence C$^*$-algebras
in general settings. As explained above, arguments in this section are essentially based on 
\cite{Sa2}, \cite{Sza6}, \cite{MS}, \cite{MS2} and \cite{MS3}. Also,  
Ocneanu's results in \cite{Oc} play an important role. 
In Section \ref{sec:properties}, we shall consider properties of the fixed point subalgebra 
$F(A\otimes\mathcal{W})^{\alpha\otimes\mathrm{id}_{\mathcal{W}}}$ where $\alpha$ is 
an outer action of a countable discrete amenable group on 
a simple separable nuclear monotracial C$^*$-algebra $A$. 
This section is based on the author's previous works in \cite{Na2} and \cite{Na3}. 
In Section \ref{sec:sza}, we shall consider a slight variant of Szab\'o's approximate cocycle 
intertwining argument, which is our main technical tool in this paper. 
In Section \ref{sec:unique}, we shall show a uniqueness type theorem. Our proof of this theorem 
is based on Connes' $2\times 2$ matrix trick and properties of Kirchberg's relative central sequence 
C$^*$-algebras. 
In Section \ref{sec:exist}, we shall show an existence type theorem.  
Many arguments in this section are based on \cite[Section 5]{Na4} and Schafhauser's ideas \cite{Sc} 
in his proof of the Tikuisis-White-Winter theorem \cite{TWW}. Also, we use properties of fixed point 
subalgebras of usual central sequence C$^*$-algebras. 
In Section \ref{sec:main}, we shall show the main results in this paper.

\section{Preliminaries}\label{sec:pre}

In this section we shall collect notations and definitions.  
We refer the reader to \cite{Bla} and \cite{Ped2} for basics of operator algebras. 

For a C$^*$-algebra $A$, we denote by $A_{+}$ the set of positive elements of $A$,  
by $A^{\sim}$ the unitization algebra of $A$ and by $M(A)$ the multiplier algebra of $A$. 
Note that if $A$ is unital, then $A=A^{\sim}=M(A)$. Let $\mathrm{GL}(A^{\sim})$ be the set of 
invertible elements in $A^{\sim}$. 
For a unitary element $u$ in $M(A)$, define an automorphism $\mathrm{Ad}(u)$ of $A$ by 
$\mathrm{Ad}(u)(a)=uau^*$. Such an automorphism is said to be an \textit{inner automorphism} of 
$A$. 
Every automorphism $\alpha$ of $A$ induces an automorphism of $M(A)$. 
We denote it by the same symbol $\alpha$ for simplicity. 
For $a,b\in A$, let $[a,b]$ be the commutator $ab-ba$. 
We say that $A$ is \textit{monotracial} if $A$ has a unique tracial state 
and no unbounded traces. In the case where $A$ is monotracial, we denote 
by $\tau_{A}$ the unique tracial state on $A$ unless otherwise specified. 
In general, every tracial state $\tau$ on $A$ extends uniquely to a tracial state on $M(A)$. 
We denote it by the same symbol $\tau$ for simplicity. 
We say that positive elements $a$ and $b$ are \textit{Murray-von Neumann equivalent in $A$} if 
there exists an element $z$ in $A$ such that $z^*z=a$ and $zz^*=b$. 
For any $a\in A_{+}$ and $\varepsilon>0$, let $(a-\varepsilon)_{+}:=f_{\varepsilon}(a)\in A_{+}$ where 
$f_{\varepsilon}(t)= \max\{0, t-\varepsilon\}$, $t\in\mathrm{Sp}(a)$.

\subsection{Group actions}
In this paper, we assume that $\Gamma$ is a countable discrete group 
unless otherwise specified. 
An \textit{action} $\alpha$ of $\Gamma$ on a C$^*$-algebra $A$ is a 
homomorphism from $\Gamma$ to $\mathrm{Aut}(A)$. 
We denote by $A\rtimes_{\alpha}\Gamma$ and $A^{\alpha}$ the reduced crossed product C$^*$-algebra 
and the fixed point subalgebra, respectively.  
Since $\Gamma$ is discrete, $\alpha$ induces an action on $M(A)$. We denote it by the same symbol 
$\alpha$ for simplicity. 
An $\alpha$-\textit{cocycle} $u$ is a map from 
$\Gamma$ to the unitary group of $M(A)$ such that $u_{gh}=u_g\alpha_g(u_{h})$ 
for any $g, h\in\Gamma$. We say that an $\alpha$-cocycle $u$ is a \textit{coboundary} if there exists a 
unitary element $w$ in $M(A)$ such that $u_g=w\alpha_g(w^*)$ for any $g\in\Gamma$. 
Let $\alpha$ and $\beta$ be actions of $\Gamma$ on $A$ and $B$, respectively. 
We say that $\alpha$ and $\beta$ are \textit{conjugate} if there exist an isomorphism 
$\varphi$ from $A$ onto $B$ such that $\varphi \circ \alpha_{g} =\beta_g \circ \varphi$ 
for any $g\in \Gamma$. They are said to be \textit{cocycle conjugate} 
if there exist an isomorphism $\varphi$ from $A$ onto $B$ and $\beta$-cocycle $u$ such that 
$\varphi \circ \alpha_{g} = \mathrm{Ad}(u_g) \circ \beta_g \circ \varphi$ for any $g\in \Gamma$. 
If $\alpha$ and $\beta$ are cocycle conjugate, then $A\rtimes_{\alpha}\Gamma$ is isomorphic to 
$B\rtimes_{\beta}\Gamma$, but $A^{\alpha}$ is not isomorphic to $B^{\beta}$ in general.  
On the other hand, if 
$\alpha$ and $\beta$ are conjugate, then $A\rtimes_{\alpha}\Gamma$ and $A^{\alpha}$ are 
isomorphic to $B\rtimes_{\beta}\Gamma$ and $B^{\beta}$, respectively. 

An action $\alpha$ of $\Gamma$ on $A$ is said to be \textit{outer} if $\alpha_g$ is not an inner 
automorphism of $A$ 
for any $g\in \Gamma\setminus \{\iota\}$ where $\iota$ is the identity of $\Gamma$. 
Assume that $A$ is monotracial, and consider the Gelfand-Naimark-Segal (GNS) representation 
$(\pi_{\tau_A}, H_{\tau_A})$ of $\tau_{A}$. Then $\alpha$ induces an action $\tilde{\alpha}$ of $\Gamma$ 
on $\pi_{\tau_{A}}(A)''$. We say that $\alpha$ is \textit{strongly outer} if 
$\tilde{\alpha}_g$ is not an inner automorphism of $\pi_{\tau_{A}}(A)''$ for any 
$g\in \Gamma\setminus \{\iota\}$. 
(See, for example, \cite{GH} for the definition of strongly outerness in more general settings.) 
It is known that if $\alpha$ is a strongly outer action of $\Gamma$ on a simple monotracial 
C$^*$-algebra $A$, then $A\rtimes_{\alpha}\Gamma$ is simple and monotracial.  
In particular, the unique tracial state $\tau_{A\rtimes_{\alpha}\Gamma}$ is given by 
$\tau_{A}\circ E_{\alpha}$ where $E_{\alpha}$ is the canonical conditional expectation 
from $A\rtimes_{\alpha}\Gamma$ onto $A$. 

\subsection{Kirchberg's relative central sequence C$^*$-algebras}

Fix a free ultrafilter $\omega$ on $\mathbb{N}$. 
For a C$^*$-algebra $B$, put 
$$
B^{\omega}:= \ell^{\infty}(\mathbb{N}, B)/\{\{x_n\}_{n\in\mathbb{N}}\in \ell^{\infty}(\mathbb{N}, B)\; |
\; \lim_{n\to\omega} \|x_n \|=0 \}.
$$
We denote by $(x_n)_n$ a representative of an element in $B^{\omega}$. 
Let $\Phi$ be a homomorphism from a C$^*$-algebra $A$ to $M(B)^{\omega}$. 
Set 
$$
B^{\omega}\cap \Phi (A)^{\prime}:= \{(x_n)_n\in B^{\omega}\; |\; [(x_n)_n, \Phi (a)]=0 \text{ for any } a\in A
\}
$$
and 
$$
\mathrm{Ann}(\Phi (A), B^{\omega}):=\{(x_n)_n\in B^{\omega}\cap \Phi (A)^{\prime}\; |\; 
(x_n)_n\Phi(a)=0 \text{ for any } a\in A\}. 
$$
Note that we do not assume that $\Phi(A)$ is a subalgebra of $B^{\omega}$ in general but 
$(x_n)_n\Phi (a)$ and $\Phi(a)(x_n)_n$ are elements in $B^{\omega}$ for any 
$(x_n)_n\in B^{\omega}$ and $a\in A$. 
It is easy to see that $\mathrm{Ann}(\Phi (A), B^{\omega})$ is a closed ideal of 
$B^{\omega}\cap \Phi (A)^{\prime}$. 
Define \textit{Kirchberg's relative central sequence C$^*$-algebra} $F(\Phi(A), B)$ 
of $\Phi$ by 
$$
F(\Phi(A), B):=B^{\omega}\cap \Phi(A)^{\prime}/\mathrm{Ann}(\Phi(A), B^{\omega}).
$$ 
We identify $B$ with the C$^*$-subalgebra of $B^\omega$ consisting of equivalence 
classes of constant sequences. In the case $A=B$ and $\Phi=\mathrm{id}_{B}$, 
we denote $F(\Phi(B), B)$ by $F(B)$ and call it \textit{Kirchberg's central sequence C$^*$-algebra}. 
If $A$ is $\sigma$-unital and $\Phi(A)\subseteq B^{\omega}$, then $F(\Phi(A), B)$ has a unit by 
\cite[Proposition 1.9]{Kir2} (see also \cite[Section 2.2]{Na4}). 

For a tracial state $\tau_{B}$ on $B$, 
define a tracial state $\tau_{B, \omega}$ on $M(B)^{\omega}$ by 
$\tau_{B, \omega}((x_n)_n):= \lim_{n\to\omega}\tau_{B}(x_n)$ for any $(x_n)_n\in M(B)^{\omega}$. 
If $\tau_{B, \omega}\circ \Phi$ is a state on $A$, then $\tau_{B, \omega}$ induces a tracial state on 
$F(\Phi(A), B)$ by the same proof as in \cite[Proposition 2.1]{Na4}. We denote it by the same symbol 
$\tau_{B, \omega}$ for simplicity. 

Let $\beta$ be an action of $\Gamma$ on $B$. Since $\Gamma$ is discrete, $
\beta$ induces an action of $\Gamma$ on 
$M(B)^{\omega}$ and $F(B)$, respectively. 
We denote them by the same symbol $\beta$ for simplicity. 
It is easy to see that if an action $\alpha$ on $A$ is cocycle conjugate to an action $\beta$ on $B$, 
then $\alpha$ on $F(A)$ is conjugate to $\beta$ on $F(B)$. 
In particular, $F(A)^{\alpha}$ is isomorphic to $F(B)^{\beta}$.  
We say that an action $\beta^{\omega}$ of $\Gamma$ on $M(B)^{\omega}$ (respectively,  
$F(\Phi(A)), B)$) is \textit{semiliftable} if there exists a set 
$\{\beta_{g,n}\; |\; g\in\Gamma, n\in\mathbb{N}\}$ of automorphisms of $B$ such that 
$\beta_g^{\omega}((x_n)_n)=(\beta_{g,n}(x_n))_n$ for any $g\in G$ and $(x_n)_n\in M(B)^{\omega}$
(respectively, $\beta^{\omega}([(x_n)_n])=[(\beta_{g,n}(x_n))_n]$ for any $g\in G$ and 
$[(x_n)_n]\in F(\Phi(A)), B)$.)
If a semiliftable action $\beta^{\omega}$ of $\Gamma$ on $M(B)^{\omega}$ satisfies 
$\beta_g( \Phi(A))= \Phi(A)$ for any $g\in \Gamma$, then $\beta^{\omega}$ 
induces (semiliftable) actions on $B^{\omega}\cap \Phi(A)^{\prime}$ and $F(\Phi(A), B)$, respectively.
We denote them by the same symbol $\beta^{\omega}$ for simplicity.

For analyzing (relative) central sequence algebras, it is useful to consider the reindexing argument and 
the diagonal argument (or Kirchberg's $\varepsilon$-test \cite[Lemma A.1]{Kir2}). 
We refer the reader to \cite[Section 1.3]{BBSTWW} and \cite[Chapter 5]{Oc} for details of these 
arguments. 

\section{Fixed point subalgebra of relative central sequence C$^*$-algebras}\label{sec:fixed}

In this section we shall consider properties of 
$(B^{\omega}\cap \Phi(A)^{\prime})^{\beta^{\omega}}$ and $F(\Phi (A), B)^{\beta^{\omega}}$ 
for certain $A$, $B$, $\Phi$ and $\beta^{\omega}$. 
Note that arguments in this section are essentially based on 
Sato's observation \cite{Sa2}, 
Szab\'o's arguments \cite{Sza6} and Matui-Sato's techniques \cite{MS}, \cite{MS2}, \cite{MS3}. 
Also, Ocneanu's results in \cite{Oc} play an important role. 

We say that a monotracial C$^*$-algebra $A$ has \textit{strict comparison} if for any $k\in\mathbb{N}$, 
$a,b \in (A\otimes M_{k}(\mathbb{C}))_{+}$ with 
$d_{\tau_A\otimes Tr_{k}}(a)<d_{\tau_A\otimes Tr_{k}}(b)$, there exists a sequence 
$\{r_n\}_{n\in\mathbb{N}}$ in $A\otimes M_{k}(\mathbb{C})$ such that $\| r_n^* br_n- a\|\to 0$. 
The following lemma is essentially based on \cite[Lemma 3.2]{Sza6}. 

\begin{lem}\label{lem:property-si}
Let $A$ be a simple separable monotracial C$^*$-algebra and $B$ a monotracial 
C$^*$-algebra with strict comparison, and let $\Phi$ be a homomorphism from 
$A$ to $M(B)^{\omega}$ such that $\tau_{A}=\tau_{B, \omega}\circ \Phi$. 
Suppose that $e$ and $f$ are positive contractions in $B^{\omega}\cap \Phi(A)^{\prime}$ satisfying   
$$
\tau_{B, \omega}(e)=0 \quad \text{and} \quad \inf_{m\in\mathbb{N}}\tau_{B, \omega}(f^m)>0.
$$
If $b$ is a positive element in $A$ of norm one, then there exists an element $r$ in $B^{\omega}$ 
such that
$$
fr=r, \quad \Phi(b)r=r \quad \text{and} \quad r^*r=e.
$$
\end{lem}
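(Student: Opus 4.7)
The proof adapts the Matui--Sato property (SI) argument (cf.~\cite{MS}, \cite{Sa2}) in the relative form of Szab\'o's \cite[Lemma 3.2]{Sza6}. My plan is: reduce to approximate conditions at each level of the ultrapower via Kirchberg's $\varepsilon$-test, produce approximate witnesses $r_n \in B$ using strict comparison in $B$, and diagonalize to obtain the exact $r \in B^{\omega}$.

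First I would lift $e, f$ to representing sequences $(e_n)_n, (f_n)_n$ of positive contractions in $B$, and lift $\Phi(b)$ to $(b_n)_n$ with $b_n \in M(B)$ positive contractions. The relations $e, f \in B^{\omega} \cap \Phi(A)^{\prime}$ become $\|[e_n, b_n]\|, \|[f_n, b_n]\| \to 0$ along $\omega$. The hypothesis $\tau_{B,\omega}(e) = 0$ gives $d_{\tau_B}((e_n - \varepsilon)_+) \leq \varepsilon^{-1}\tau_B(e_n) \to 0$ for every fixed $\varepsilon > 0$. Simplicity of $A$ forces $\tau_A$ to be faithful, so $\tau_{B,\omega}(\Phi(b)^N) = \tau_A(b^N) > 0$ for every $N$. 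Combining this with $\inf_m \tau_{B,\omega}(f^m) > 0$ and the commutation $[f, \Phi(b)] = 0$, I would aim to extract an exponent $m_0 = m_0(N)$ and a constant $\delta_N > 0$ such that the positive element $h := f^{m_0}\Phi(b)^N = (\Phi(b)^{N/2} f^{m_0/2})^*(\Phi(b)^{N/2} f^{m_0/2}) \in B^{\omega}$ satisfies $\tau_{B,\omega}(h) \geq \delta_N$.

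With that lower bound in hand, for $n$ in the ultrafilter the estimates $d_{\tau_B}((e_n - \varepsilon)_+) < \delta_N/2 \leq d_{\tau_B}(f_n^{m_0} b_n^N)$ combined with strict comparison in $B$ produce elements $s_n \in B$ with $s_n^* f_n^{m_0} b_n^N s_n$ arbitrarily close to $(e_n - \varepsilon)_+$. Setting $r_n := f_n^{m_0/2} b_n^{N/2} s_n$, one has $r_n^* r_n \approx (e_n - \varepsilon)_+$, and spectral-cutoff estimates on $f_n^{m_0/2}$ and $b_n^{N/2}$ (for $m_0, N$ sufficiently large) yield $\|f_n r_n - r_n\|, \|b_n r_n - r_n\| < \varepsilon$. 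A diagonal argument via Kirchberg's $\varepsilon$-test then furnishes the exact $r$.

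The main obstacle I anticipate is securing the lower bound $\tau_{B,\omega}(h) \geq \delta_N$: a priori the commuting positive elements $f^{m_0}$ and $\Phi(b)^N$ could have disjoint ``supports,'' which would make $\tau_{B,\omega}(h)$ vanish. Ruling this out uses that $f \notin \mathrm{Ann}(\Phi(A), B^{\omega})$; indeed, any $x$ in the annihilator has $\tau_{B,\omega}(x) = 0$, as one sees from the $L^2(\tau_{B,\omega})$-convergence $\Phi(e_{\lambda}) \to 1$ for an approximate unit $(e_{\lambda})$ of $A$ (which follows from $\tau_A = \tau_{B,\omega} \circ \Phi$) combined with $x \Phi(e_{\lambda}) = 0$ and the Cauchy--Schwarz inequality, and this would force $\tau_{B,\omega}(f^m) = 0$ for all $m$, contradicting the hypothesis. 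Simplicity of $A$ then upgrades $f \notin \mathrm{Ann}(\Phi(A), B^{\omega})$ to $f \Phi(b) \neq 0$ (since $b$ generates $A$ as a closed two-sided ideal, so $f \Phi(b) = 0$ would propagate to $f \Phi(a) = 0$ for every $a \in A$), from which the required positive trace estimate should follow by a further refinement.
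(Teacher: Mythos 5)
Your overall strategy (lift to representing sequences, get $d_{\tau_B}((e_n-\varepsilon)_+)\to 0$ from $\tau_{B,\omega}(e)=0$, compare against a product of $f$-powers and $\Phi(b)$-powers by strict comparison in $B$, then diagonalize with Kirchberg's $\varepsilon$-test) is the same Matui--Sato property (SI) scheme the paper follows. But there is a genuine gap exactly at the point you flag as the ``main obstacle,'' and your proposed resolution does not close it. You need a \emph{quantitative} lower bound $\tau_{B,\omega}(f^{m_0}\Phi(b)^N)\geq\delta_N>0$ (equivalently a uniform lower bound on $d_{\tau_B}(f_n^{m_0}b_n^N)$ for $n$ in the ultrafilter), and what you derive is only the \emph{qualitative} statement $f\Phi(b)\neq 0$. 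Since $\tau_{B,\omega}$ is not faithful on $B^{\omega}$, a nonzero positive element can perfectly well have trace zero, so non-vanishing of $f\Phi(b)$ gives no lower bound whatsoever; the phrase ``should follow by a further refinement'' is precisely where the lemma's technical content lives. The correct mechanism, which is what the paper's citation of \cite[Lemma 2.4]{MS} and \cite[Lemma 5.7]{Na1} encodes, is a product formula for the trace against the relative commutant: for a positive $x\in B^{\omega}\cap\Phi(A)'$ with $\tau_{B,\omega}(x)>0$, the functional $a\mapsto\tau_{B,\omega}(\Phi(a)x)/\tau_{B,\omega}(x)$ is a tracial state on $A$ (using that $x$ commutes with $\Phi(A)$), hence equals $\tau_A$ because $A$ is monotracial; therefore $\tau_{B,\omega}(\Phi(a)f^m)=\tau_A(a)\,\tau_{B,\omega}(f^m)$, and faithfulness of $\tau_A$ (from simplicity) supplies the strictly positive bound. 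This is where the hypotheses ``simple'' and ``monotracial'' on $A$ actually enter, and your proof never uses them in this quantitative way.

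Two smaller points. First, the paper arranges the exact relation $fr=r$ by replacing $f$ with a positive contraction $\tilde f$ satisfying $\tilde f f=\tilde f$ and $\inf_m\tau_{B,\omega}(\tilde f^m)=\inf_m\tau_{B,\omega}(f^m)$ (the \cite[Lemma 2.3]{MS} trick), and handles $\Phi(b)r\approx r$ by choosing $k$ with $\|b^{k+1}-b^k\|<\varepsilon$ and setting $r=\Phi(b^k)\tilde f^{1/2}t$ with $t$ a \emph{contraction}, so that $\|\Phi(b)r-r\|\leq\|b^{k+1}-b^k\|$. Your estimates $\|f_nr_n-r_n\|,\|b_nr_n-r_n\|<\varepsilon$ for $r_n=f_n^{m_0/2}b_n^{N/2}s_n$ require a norm bound on the remaining factor after peeling off $(f_n-1)f_n^{m_0/2}$ or $(b_n-1)b_n^{N/2}$, and the $s_n$ produced by strict comparison are not contractions a priori; this is fixable but needs the same care about the order of factors and contractivity that the paper builds in. Second, your construction only yields $r^*r\approx(e_n-\varepsilon)_+$; that is fine for the $\varepsilon$-test, but the passage from the Cuntz subequivalence $(e_n-\varepsilon)_+\precsim f_n^{m_0}b_n^N$ to an element realizing the equality in the required form is itself the content of \cite[Lemma 2.5]{MS} and should be cited or reproved rather than asserted.
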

\begin{proof}
By similar arguments as in the proof of \cite[Lemma 3.6]{Sza6} and \cite[Section 2]{MS}, 
we can prove this lemma. We shall give a sketch of a proof for the reader's convenience. 

Let $\varepsilon>0$. It is enough to show that there exists a contraction $r$ in $B^{\omega}$ 
such that 
$$
fr=r, \quad \| \Phi(b)r-r\|< \varepsilon \quad \text{and} \quad r^*r=e
$$
by the diagonal argument. 
Take a natural number $k$ such that 
$
\| b^{k+1}- b^k\| < \varepsilon.
$
Essentially the same argument as in the proof of \cite[Lemma 2.3]{MS} shows 
that  there exists a positive contraction $\tilde{f}$ in $B^{\omega}\cap \Phi(A)^{\prime}$ 
such that $\tilde{f}f=\tilde{f}$ and 
$\inf_{m\in\mathbb{N}}\tau_{B, \omega}(f^m)=\inf_{m\in\mathbb{N}}\tau_{B, \omega}(\tilde{f}^m)$. 
Note that $\tilde{f}^{1/2}\Phi (b^{2k})\tilde{f}^{1/2}$ is a positive contraction in $B^{\omega}$. 
Since $B$ has strict comparison and $b^{2k}$ is a positive element of norm one, 
using \cite[Lemma 5.7]{Na1} instead of \cite[Lemma 2.4]{MS}, 
essentially the same argument as in the proof of \cite[Lemma 2.5]{MS} shows that 
there exists a contraction $t$ in $B^{\omega}$ such that 
$$
t^* \tilde{f}^{1/2}\Phi (b^{2k})\tilde{f}^{1/2}t = e.
$$
Put $r:= \Phi(b^{k})\tilde{f}^{1/2}t\in B^{\omega}$, then we have 
$$
fr=r, \quad \| \Phi(b)r-r\|< \varepsilon \quad \text{and} \quad r^*r=e.
$$
\end{proof}

Let $\Phi$ be a homomorphism from $A$ to $M(B)^{\omega}$ such that 
$\tau_{A}=\tau_{B, \omega}\circ \Phi$. 
If $A$ is non-unital, then $\Phi$ can be uniquely extended to a unital 
homomorphism $\Phi^{\sim}$ from $A^{\sim}$ to $M(B)^{\omega}$. 
As a notation,  put $\Phi^{\sim}=\Phi$ if $A$ is unital. 
The following lemma is essentially based on \cite[Lemma 3.6]{Sza6}. 
We refer the reader to \cite[Lemma 3.2]{MS3} and \cite[Proposition 2.2]{MS} 
for the pioneering works of this lemma. 

\begin{lem}\label{lem:szabo}
Let $A$ be a simple separable non-type I nuclear monotracial C$^*$-algebra and $B$ a monotracial 
C$^*$-algebra with strict comparison, and let $\Phi$ be a homomorphism from 
$A$ to $M(B)^{\omega}$ such that $\tau_{A}=\tau_{B, \omega}\circ \Phi$. 
Assume that $\Phi$ is unital if $A$ is unital. 
Suppose that $e$ and $f$ are positive contractions 
in $B^{\omega}\cap \Phi(A)^{\prime}$ satisfying   
$$
\tau_{B, \omega}(e)=0 \quad \text{and} \quad \inf_{m\in\mathbb{N}}\tau_{B, \omega}(f^m)>0.
$$
Then for any finite subset $F\subset A^{\sim}$ and $\varepsilon>0$,  there exist a finite subset 
$G\subset A^{\sim}$ and $\delta>0$ such that the following holds. 
If $b$ is a positive element in $A$ of norm one satisfying 
$$
\| [b, a] \| < \delta \quad \text{and} \quad  \| ba\| > \| a\| -\delta 
$$
for any $a\in G$, then there exists an element $s$ in $B^{\omega}$ such that 
$$
fs=s, \quad \|\Phi (b)s - s\|< \varepsilon \quad \text{and} \quad 
\| s^*\Phi^{\sim}(a)s-\Phi^{\sim} (a)e\| < \varepsilon
$$ 
for any $a\in F$. 
\end{lem}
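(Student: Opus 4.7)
The plan is to produce $s$ by lightly modifying the element $r$ extracted in Lemma \ref{lem:property-si}, so that the additional centralization $\|s^* \Phi^\sim(a) s - \Phi^\sim(a) e\| < \varepsilon$ for $a \in F$ emerges automatically from the hypothesis on $b$. The starting observation is the factorization
\[
r = \Phi(b^k) \tilde f^{1/2} t,
\]
from the proof of Lemma \ref{lem:property-si}, where $\tilde f \in B^\omega \cap \Phi(A)'$ satisfies $\tilde f f = \tilde f$ with $\inf_m \tau_{B,\omega}(\tilde f^m) = \inf_m \tau_{B,\omega}(f^m)$, and $t \in B^\omega$ is a contraction with $t^* \tilde f^{1/2} \Phi(b^{2k}) \tilde f^{1/2} t = e$. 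This gives
\[
r^* \Phi^\sim(a) r = t^* \tilde f^{1/2} \Phi(b^k) \Phi^\sim(a) \Phi(b^k) \tilde f^{1/2} t.
\]
Provided $F \subset G$ and $\delta$ is small, $\|[b,a]\| < \delta$ for $a \in G$ allows one to commute $\Phi(b^k)$ past $\Phi^\sim(a)$ up to an error of order $2k\delta \max_{a \in F}\|a\|$, while $\tilde f^{1/2}$ commutes with $\Phi^\sim(a)$ exactly since $\tilde f \in B^\omega \cap \Phi(A)'$. What remains is to also commute $t$ past $\Phi^\sim(a)$.

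The main technical step is therefore to refine the construction of $t$ so that $\|[t, \Phi^\sim(a)]\| < \varepsilon'$ for $a \in F$, where $\varepsilon'$ is chosen small enough to absorb into $\varepsilon$. The positive element $\tilde f^{1/2} \Phi(b^{2k}) \tilde f^{1/2}$ that $t$ Cuntz-comparison-dominates already approximately commutes with $\Phi^\sim(F)$ (with error controlled by $\|[b,a]\| < \delta$ for $a \in F \subset G$), so the Matui--Sato strict-comparison construction of $t$ from \cite[Lemma~2.5]{MS}, combined with \cite[Lemma~5.7]{Na1}, can be carried out inside the approximate relative commutant of $\Phi^\sim(F)$ in $B^\omega$. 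The non-type I and nuclearity hypotheses on $A$ are what guarantee, via Sato's observation \cite{Sa2} together with Ocneanu-type reindexing \cite{Oc}, a sufficient supply of approximately $\Phi^\sim(F)$-central positive contractions in $B^\omega \cap \Phi(A)'$ to carry out this strict-comparison step and thereby select $t$ with $\|[t, \Phi^\sim(a)]\| < \varepsilon'$ for $a \in F$. A Kirchberg $\varepsilon$-test/diagonal argument packages these approximations into a single $t$.

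Setting $s := r$ and combining everything, we obtain $fs = s$ and $\|\Phi(b) s - s\| < \varepsilon$ immediately from Lemma \ref{lem:property-si}, and
\[
\|s^* \Phi^\sim(a) s - \Phi^\sim(a) e\| = O\bigl((k\delta + \varepsilon')\,\textstyle\max_{a \in F}\|a\|\bigr)
\]
from the two commutator steps above. The parameters are chosen in reverse order of dependence: first $k$ large enough to make $\|b^{k+1} - b^k\| < \varepsilon$ feasible (as in Lemma \ref{lem:property-si}); then $\varepsilon'$ depending on $k$, $F$, $\varepsilon$; then $G$ enlarged to contain $F$ along with whatever auxiliary elements the Kirchberg $\varepsilon$-test reduction requires; and finally $\delta$ small enough that $k\delta + \varepsilon'$ falls below the desired threshold. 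I expect the hardest step to be the refined construction of the contraction $t$ inside the approximate relative commutant of $\Phi^\sim(F)$: this is where the non-type I and nuclearity of $A$ are essential, and parallels Szab\'o's argument in \cite[Lemma~3.6]{Sza6}, on which this lemma is modeled.
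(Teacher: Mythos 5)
Your proposal has a genuine gap at its central step: the construction of a contraction $t$ with $t^*\tilde f^{1/2}\Phi(b^{2k})\tilde f^{1/2}t=e$ that in addition approximately commutes with $\Phi^\sim(F)$. Strict comparison in $B^{\omega}$ only produces \emph{some} $t$ with the Cuntz-comparison identity; it gives no control whatsoever on $[t,\Phi^\sim(a)]$. To force $t$ into the (approximate) relative commutant of $\Phi^\sim(F)$ you would need a strict comparison statement for $B^{\omega}\cap\Phi(A)'$ (or at least for the relative commutant of a finite set), and in this paper that is exactly Propositions \ref{pro:strict-comparison-notk} and \ref{pro:strict-comparison}, which are proved \emph{from} Theorems \ref{thm:si} and \ref{thm:kir-si}, which in turn rest on the lemma you are trying to prove. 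The citations you offer in support (Sato's Proposition \ref{pro:sato}, Ocneanu reindexing) supply approximately central elements of $A$ with disjoint translates, not comparison in relative commutants, so the step is unjustified and the route is essentially circular. Indeed, if one could simply arrange $s=r$ to approximately commute with $\Phi^\sim(F)$, the weaker conclusion $\|s^*\Phi^\sim(a)s-\Phi^\sim(a)e\|<\varepsilon$ --- which is the whole point of the property (SI) formalism --- would be superfluous.

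The paper's proof avoids needing any commutation property of $t$ (or $r$). It uses nuclearity and non-type I-ness to approximate $\mathrm{id}_{A^\sim}$ by a one-step-elementary map $a\mapsto\sum_{i,j}\lambda^\sim(d_i^*ad_j)c_i^*c_j$ for a pure state $\lambda$ (via \cite[Proposition 5.9]{KR}, using $\pi_\lambda(A)\cap K(H_\lambda)=\{0\}$), then excises $\lambda$ by a positive contraction $a_0$ (\cite[Proposition 2.2]{AAP}) so that $a_0d_i^*ad_ja_0\approx\lambda^\sim(d_i^*ad_j)a_0^2$. The test set is $G=\{d_ia_0\}\cup\{a_0\}$ (not $F$), Lemma \ref{lem:property-si} is applied to $ba_0^2b/\|ba_0^2b\|$ to get $r$ with $r^*r=e$, and the output is $s:=\sum_i\Phi(d_ia_0b)r\Phi^\sim(c_i)$, \emph{not} $s=r$. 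Then $s^*\Phi^\sim(a)s$ collapses to $\sum_{i,j}\lambda^\sim(d_i^*ad_j)\Phi^\sim(c_i^*c_j)e\approx\Phi^\sim(a)e$ using only the exact identities $r^*r=e$ and $\Phi(ba_0^2b/\|ba_0^2b\|)r=r$ together with excision --- no commutation of $r$ or $t$ with $\Phi^\sim(a)$ enters. This also corrects your reading of where non-type I and nuclearity are used: they feed the one-step-elementary approximation, not a supply of approximately central contractions for the comparison step.
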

\begin{proof}
By similar arguments as in the proof of \cite[Lemma 3.6]{Sza6}, \cite[Lemma 3.2]{MS3} and 
\cite[Proposition 2.2]{MS}, we can prove this lemma. We shall give a sketch of a proof for the 
reader's convenience. 

Let $F\subset  A^{\sim}$ be a finite subset and $\varepsilon>0$. We may assume that 
every element in $F$ is of norm one. 
Take a pure state $\lambda$ on $A$. Then $\pi_{\lambda}(A)\cap K(H_{\lambda})=\{0\}$
because $A$ is simple, separable and of non-type I. 
Note that $\lambda$ can be uniquely extended to a pure state 
$\lambda^{\sim}$ on $A^{\sim}$ and we have $\pi_{\lambda^{\sim}}(A^{\sim})\cap K(H_{\lambda})=\{0\}$. 
Hence $\mathrm{id}_{A^{\sim}}$ is a point-norm limit of a sequence of one-step-elementary maps 
by nuclearity of $A$ and \cite[Proposition 5.9]{KR}. 
In particular, the proof of \cite[Proposition 5.9]{KR} shows that there exist elements 
$c_1,...,c_N$, $d_1,...,d_N$ in $A^{\sim}$ such that 
$$
\| a- \sum_{i,j=1}^{N}\lambda^{\sim} (d_i^*ad_j)c_i^*c_j \| < \dfrac{\varepsilon}{3}
$$
for any $a\in F$. We may assume that $c_i$ is a contraction for any $1\leq i \leq N$ as in the proof 
of \cite[Proposition 2.2]{MS}. 
By \cite[Proposition 2.2]{AAP}, there exists a positive contraction $a_0$ in $A^{\sim}$ such that 
$\lambda^{\sim} (a_0)=1$ and 
$$
\| a_0d_i^*ad_j a_0 - \lambda^{\sim}(d_i^*ad_j)a_0^2\| < \dfrac{\varepsilon}{3N^2}
$$
for any $a\in F$ and $1\leq i,j\leq N$. 
Put 
$$
G:= \{ d_ia_0 \; |\; 1\leq i \leq N\}\cup \{a_0\}
$$
and
$$
\delta:=\min\left\{\dfrac{1}{2},\; 
\dfrac{\varepsilon^2}{(N+ \sum_{i=1}^{N} \sqrt{2} \| d_i \|)^2},\; \dfrac{\varepsilon}{6}\right\}.
$$
We shall show that $G$ and $\delta$ satisfy the desired property. Let $b$ be a positive element  
in $A$ of norm one such that 
$\| [b, a] \| < \delta$ and $\| ba\| > \| a\| -\delta$ for any $a\in G$. 
Note that we have $0<(1-\delta)^2<\| ba_0^2b \| \leq 1$ since $\| ba_0\| > \| a_0\| -\delta = 1-\delta>0$. 
Lemma \ref{lem:property-si} implies that there exists a contraction $r$ in $B^{\omega}$ such that 
$$
fr=r, \quad \Phi\left(\dfrac{ba_0^2b}{\|b a_0^2 b\|}\right)r=r \quad \text{and} \quad r^*r=e. 
$$
Put 
$$
s:= \sum_{i=1}^{N}\Phi (d_i a_0 b)r\Phi^{\sim} (c_i)\in B^{\omega}, 
$$
then $fs=s$. Since we have 
\begin{align*}
\| \Phi(b)r-\Phi(b^2)r\|^2
&\leq \|r-\Phi(b)r \|^2 
= \| r^*\Phi^{\sim}(1_{A^{\sim}}-b)^2r\| \leq \|r^* \Phi^{\sim} (1_{A^{\sim}}-b)r\|  \\
& \leq  \|r^* \Phi^{\sim} (1_{A^{\sim}}-b^2)r\|
\leq 
\|r^* \Phi^{\sim}(1_{A^{\sim}}-ba_0^2b) r\| 
\leq \| r-\| ba_0^2 b \|r \| \\
&\leq 1-\| ba_0^2b\|  
 <1 - (1-\delta)^2 < 2\delta,
\end{align*}
\begin{align*}
\| \Phi(b) s - s\| 
& = \| \sum_{i=1}^{N}\Phi (bd_i a_0 b)r\Phi^{\sim} (c_i) - \sum_{i=1}^{N}\Phi (d_i a_0 b^2)r\Phi^{\sim} (c_i) \\
& \quad +\sum_{i=1}^{N}\Phi (d_i a_0 b^2)r\Phi^{\sim} (c_i) 
-\sum_{i=1}^{N}\Phi (d_i a_0 b)r\Phi^{\sim} (c_i)\| \\
& \leq \sum_{i=1}^{N} \| [b, d_ia_0] \| + \sum_{i=1}^{N} \| d_i\| \| \Phi (b^2)r -\Phi (b)r \| \\
& < N\delta + \sum_{i=1}^{N} \| d_i \| \sqrt{2\delta}< (N+ \sum_{i=1}^{N} \sqrt{2} \| d_i \| ) 
\sqrt{\delta} \leq \varepsilon.
\end{align*}
For any $a\in F$,  we have 
\begin{align*}
& \| s^*\Phi^{\sim}(a)s-\Phi^{\sim} (a)e\| \\
& = \|   \sum_{i,j=1}^{N}\Phi^{\sim} (c_i^*)r^*\Phi (ba_0d_i^*ad_j a_0 b)r\Phi^{\sim} (c_j) 
-\Phi^{\sim} (a)e  \| \\
& < \| \sum_{i,j=1}^{N}\Phi^{\sim} (c_i^*)r^*\Phi (b\lambda^{\sim}(d_i^*ad_j) a_0^2 b)r\Phi^{\sim} (c_j) 
-\Phi^{\sim} (a)e  \| + \dfrac{\varepsilon}{3} \\
& = \| \sum_{i,j=1}^{N}\lambda^{\sim}(d_i^*ad_j)\| ba_0^2b\| 
\Phi^{\sim} (c_i^*)r^*r\Phi^{\sim} (c_j)- \Phi^{\sim}(a)e\| +  \dfrac{\varepsilon}{3} \\
& = \| \|ba_0^2b\|  \Phi^{\sim} \left(\sum_{i,j=1}^{N}\lambda^{\sim}(d_i^*ad_j)c_i^*c_j\right)e - 
\Phi^{\sim}(a)e\| +  \dfrac{\varepsilon}{3} \\
& = \|\|ba_0^2b\|  \Phi^{\sim} \left(\sum_{i,j=1}^{N}\lambda^{\sim}(d_i^*ad_j)c_i^*c_j-a\right)e 
- (1-\|ba_0^2b\|)\Phi^{\sim}(a)e\| 
+\dfrac{\varepsilon}{3} \\
& < \dfrac{\varepsilon}{3} + 1-(1-\delta)^2+\dfrac{\varepsilon}{3} < 
\dfrac{2}{3}\varepsilon +2\delta \leq \varepsilon. 
\end{align*}
Therefore the proof is complete. 
\end{proof}

The following proposition is an immediate consequence of Sato's observation 
\cite[Proposition 2.1]{Sa2}. See also \cite[Theorem 2.1]{K}, \cite[Proposition 4.1]{Sza6} and the proof of 
\cite[Proposition 5.1]{Sa2}.  

\begin{pro}\label{pro:sato}
Let $A$ be a simple separable C$^*$-algebra, and let $\{\alpha_{i}\}_{i\in\mathbb{N}}$ be a sequence 
of outer automorphisms of $A$. Then there exists a sequence $\{b_n\}_{n\in\mathbb{N}}$ 
of positive elements in $A$ of norm one such that 
$$
\lim_{n\to \infty} \| [b_n, a] \|=0, \quad \lim_{n\to \infty}\| b_na \|= \| a\| \quad \text{and} \quad 
\lim_{n\to \infty} \| b_n \alpha_i (b_n)\|=0
$$
for any $a\in A^{\sim}$ and $i\in\mathbb{N}$. 
\end{pro}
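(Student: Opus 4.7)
The plan is to deduce this from Kishimoto's lemma on outer automorphisms of simple C$^*$-algebras combined with a standard diagonalization. By Kirchberg's $\varepsilon$-test (or the diagonal argument recalled at the end of Section~\ref{sec:pre}), it suffices to prove the finite version: for any $\varepsilon>0$, any finite subset $F\subset A^{\sim}$, and any $N\in\mathbb{N}$, there exists a positive contraction $b\in A$ of norm one such that $\|[b,a]\|<\varepsilon$ and $\|ba\|>\|a\|-\varepsilon$ for all $a\in F$, and $\|b\alpha_i(b)\|<\varepsilon$ for $i=1,\dots,N$. Applying this to an increasing sequence of finite data exhausting $A^{\sim}$ and $\mathbb{N}$ then produces the required sequence $\{b_n\}_{n\in\mathbb{N}}$.

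For the finite version, I would invoke Kishimoto's lemma: for any outer automorphism $\alpha$ of a simple C$^*$-algebra $A$, any nonzero hereditary subalgebra $D\subset A$, any finite subset $F_0\subset A$, and any $\delta>0$, there is a positive contraction $d\in D$ of norm one with $\|[d,x]\|<\delta$ for $x\in F_0$ and $\|d\alpha(d)\|<\delta$. Starting from a quasicentral approximate unit element $e_0\in A$ which approximately commutes with $F$ and satisfies $\|e_0 a\|>\|a\|-\varepsilon/2$, I would iteratively apply Kishimoto's lemma inside the hereditary subalgebra $\overline{e_{i-1}Ae_{i-1}}$ to produce $e_i$ that approximately commutes with $F\cup\{e_0,\dots,e_{i-1}\}$ and satisfies $\|e_i\alpha_i(e_i)\|$ very small. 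Setting $b:=e_N$ places $b$ in each $\overline{e_{i-1}Ae_{i-1}}$, so $\|b\alpha_i(b)\|$ is controlled by $\|e_i\alpha_i(e_i)\|$ up to the accumulated error budget, while the commutation conditions with $F$ are inherited from the initial step.

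The main obstacle is error control across the iteration, specifically preserving the fullness condition $\|ba\|>\|a\|-\varepsilon$ after repeatedly shrinking to smaller hereditary subalgebras. Approximate commutation with $F$ transfers routinely by making each $e_i$ sufficiently $F$-central. Fullness is more delicate: one uses simplicity of $A$ together with the fact that every $e_i$ is chosen of norm one in its ambient hereditary subalgebra, and tightens $\delta$ at each step so that $\|e_i a\|$ stays within $\varepsilon/2^{i+1}$ of $\|e_{i-1}a\|$. Alternatively, as indicated in the statement, one may work directly in $A^\omega$ and follow Sato's observation \cite[Proposition~2.1]{Sa2} almost verbatim, producing a single positive contraction $b\in A^\omega$ with $ba=a=ab$ in $A^\omega$ (via $[b,a]=0$ and $\|ba\|=\|a\|$ in the limit) and $b\alpha_i(b)=0$ for all $i$, then extracting a representing sequence.
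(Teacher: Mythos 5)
The paper does not actually prove Proposition \ref{pro:sato}: it is quoted as an immediate consequence of Sato's observation \cite[Proposition 2.1]{Sa2}, which is verbatim the same statement, with pointers to \cite[Theorem 2.1]{K} and \cite[Proposition 4.1]{Sza6}. Your reduction to a finite version via the diagonal argument is fine, and your fallback of ``following \cite[Proposition 2.1]{Sa2} almost verbatim'' is exactly what the paper does. The problem is your primary route. The ``Kishimoto's lemma'' you invoke --- for \emph{every} nonzero hereditary subalgebra $D\subset A$, every finite $F_0\subset A$ and every $\delta>0$ there is a norm-one positive $d\in D$ with $\|[d,x]\|<\delta$ for $x\in F_0$ and $\|d\alpha(d)\|<\delta$ --- is false as stated and is not Kishimoto's lemma. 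If $D$ is a small corner it contains nothing approximately commuting with elements ``orthogonal'' to it: take $A=\mathcal{O}_2$, $D=s_1s_1^*As_1s_1^*$ and $x=s_2$; then $ds_2=0$ while $\|s_2d\|=\|d\|$ for every $d\in D$, so $\|[d,x]\|=1$ for every norm-one $d\in D$. Kishimoto's lemma produces, in any nonzero hereditary subalgebra, a norm-one positive $d$ with $\|d\,b\,\alpha(d)\|$ small, but it contains no approximate-centrality clause; obtaining approximate centrality \emph{simultaneously} with the Kishimoto property is precisely the nontrivial content of \cite[Proposition 2.1]{Sa2}, and it is achieved by combining Kishimoto's lemma with excision of pure states \cite[Proposition 2.2]{AAP} (compare the role of $a_0$ and $\lambda^{\sim}$ in the proof of Lemma \ref{lem:szabo} above). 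So your induction pushes the whole difficulty into a mis-stated black box, and with the correct lemma the inductive step does not close: membership of $b=e_N$ in $\overline{e_0Ae_0}$ does not make $b$ approximately central for $F$ (that hereditary subalgebra may well be all of $A$), so the commutation conditions are \emph{not} ``inherited from the initial step''; they must be arranged for $b$ itself, which is the point at issue.

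A second, smaller but revealing error: at the end you assert that $[b,a]=0$ and $\|ba\|=\|a\|$ in $A^{\omega}$ yield $ba=a=ab$. They do not --- a trace-$\tfrac12$ projection in the central sequence algebra of a II$_1$ factor commutes with everything and multiplies isometrically without being a unit --- and indeed $ba=a=ab$ for all $a\in A$ would contradict $b\alpha_i(b)=0$, since then $0=b\alpha_i(b)\alpha_i(a)=b\alpha_i(a)=\alpha_i(a)$ for all $a\in A$. The correct reformulation in the ultrapower is only that $b\in(A^{\omega}\cap A^{\prime})_{+}$ has $\|b\|=1$, $b\alpha_i(b)=0$, and left multiplication by $b$ isometric on $A^{\sim}$; this weaker fullness condition is exactly what Lemma \ref{lem:szabo} and Theorem \ref{thm:si} are designed to exploit.
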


\begin{rem}
In the proposition above, we consider that $a$ is an element in the the unitization algebra $A^{\sim}$. 
Note that for a state $\lambda$ on a simple C$^*$-algebra $A$, we have 
$$
\| c_{\lambda}a \| = \| a\| 
$$
for any $a\in A^{\sim}$ where $c_{\lambda}$ is the central cover of $\pi_{\lambda}$ in $A^{**}$. 
Indeed, define a homomorphism $\pi$ from $A^{\sim}$ to $A^{**}$ by 
$\pi(a)=c_{\lambda}a$ for any $a\in A^{\sim}$. Then it is easy to see that 
$\mathrm{ker}\; \pi= \{0\}$, and hence $\pi$ is isometric. 
\end{rem}

The following theorems are essentially based on \cite[Proposition 5.1]{Sa2} and \cite[Theorem 4.2]{Sza6}. 

\begin{thm}\label{thm:si}
Let $A$ be a simple separable non-type I nuclear monotracial C$^*$-algebra and $B$ a monotracial 
C$^*$-algebra with strict comparison, and let $\Phi$ be a homomorphism from 
$A$ to $M(B)^{\omega}$ such that $\tau_{A}=\tau_{B, \omega}\circ \Phi$. 
Assume that $\Phi$ is unital if $A$ is unital. 
Let $\beta^{\omega}$ be a semiliftable action of a countable discrete amenable group on 
$M(B)^{\omega}$ such that $\beta^{\omega}_g(\Phi(A))=\Phi(A)$ for any $g\in \Gamma$. 
Suppose that $\beta^{\omega}|_{\Phi(A)}$ is outer. 
If $e$ and $f$ are positive contractions 
in $(B^{\omega}\cap \Phi(A)^{\prime})^{\beta^{\omega}}$ satisfying   
$$
\tau_{B, \omega}(e)=0 \quad \text{and} \quad \inf_{m\in\mathbb{N}}\tau_{B, \omega}(f^m)>0,
$$
then there exists an element $s$ in $(B^{\omega}\cap \Phi(A)^{\prime})^{\beta^{\omega}}$ such that 
$fs=s$ and $s^*s=e$. 
\end{thm}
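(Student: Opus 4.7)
The plan is to combine Lemma~\ref{lem:szabo} with a F{\o}lner averaging over $\Gamma$ that uses outerness to produce approximate orthogonality between $\Gamma$-translates of a non-equivariant witness, and then to extract an exactly fixed $s$ by Kirchberg's $\varepsilon$-test. First, because $\tau_{A}=\tau_{B,\omega}\circ\Phi$ is a state and $A$ is simple, $\Phi$ is injective, so $\beta^{\omega}|_{\Phi(A)}$ pulls back to an outer action $\alpha$ of $\Gamma$ on $A$ with $\Phi\circ\alpha_{g}=\beta^{\omega}_{g}\circ\Phi$. Given a finite $K\subset\Gamma$, Proposition~\ref{pro:sato} applied to $\{\alpha_{g}:g\in(KK^{-1}\cup K)\setminus\{\iota\}\}$ produces a sequence $\{b_{n}\}\subset A_{+}$ of norm-one elements that is approximately central in $A^{\sim}$, satisfies $\|b_{n}a\|\to\|a\|$ for $a\in A^{\sim}$, and satisfies $\|b_{n}\alpha_{g}(b_{n})\|\to 0$ for each such $g$.

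Fix $\varepsilon>0$ and a finite $F\subset A^{\sim}$. By amenability, choose a finite F{\o}lner set $E\subset\Gamma$ with $|E\triangle hE|<\varepsilon|E|$ for all $h\in K$, and take $K$ above to contain $E^{-1}E$. Enlarge $F$ to $\tilde F:=\bigcup_{g\in E}\alpha_{g^{-1}}(F)\cup\{1_{A^{\sim}}\}$; Lemma~\ref{lem:szabo} applied to $(\tilde F,\varepsilon')$ (for $\varepsilon'$ small) provides $(G,\delta)$ so that for $n$ large $b_{n}$ satisfies the hypotheses and yields $s_{n}\in B^{\omega}$ with $fs_{n}=s_{n}$, $\|\Phi(b_{n})s_{n}-s_{n}\|<\varepsilon'$, and $\|s_{n}^{*}\Phi^{\sim}(a)s_{n}-\Phi^{\sim}(a)e\|<\varepsilon'$ for $a\in\tilde F$. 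Define
\[
t_{n}:=|E|^{-1/2}\sum_{g\in E}\beta^{\omega}_{g}(s_{n})\in B^{\omega}.
\]
For $g\neq h$ in $E$ one has
\[
\beta^{\omega}_{g}(s_{n})^{*}\Phi^{\sim}(a)\beta^{\omega}_{h}(s_{n})=\beta^{\omega}_{g}\bigl(s_{n}^{*}\Phi^{\sim}(\alpha_{g^{-1}}(a))\beta^{\omega}_{g^{-1}h}(s_{n})\bigr),
\]
and substituting $s_{n}\approx\Phi(b_{n})s_{n}$, $\beta^{\omega}_{g^{-1}h}(s_{n})\approx\Phi(\alpha_{g^{-1}h}(b_{n}))\beta^{\omega}_{g^{-1}h}(s_{n})$, together with the approximate commutation of $b_{n}$ with $\tilde F$, bounds the norm by $\|b_{n}\alpha_{g^{-1}h}(b_{n})\|\cdot\|a\|+O(\varepsilon')=o(1)$. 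The diagonal terms simplify to $\beta^{\omega}_{g}(s_{n}^{*}\Phi^{\sim}(\alpha_{g^{-1}}(a))s_{n})\approx\Phi^{\sim}(a)\beta^{\omega}_{g}(e)=\Phi^{\sim}(a)e$ since $e$ is $\beta^{\omega}$-fixed. Summing, $t_{n}^{*}\Phi^{\sim}(a)t_{n}\to\Phi^{\sim}(a)e$ (hence $t_{n}^{*}t_{n}\to e$), $ft_{n}=t_{n}$, and the same off-diagonal orthogonality gives $\|\beta^{\omega}_{h}(t_{n})-t_{n}\|=O(\sqrt{\varepsilon})$ for $h\in K$ via the estimate $\|\sum_{g\in E\triangle hE}\beta^{\omega}_{g}(s_{n})\|^{2}\approx|E\triangle hE|$.

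Kirchberg's $\varepsilon$-test, applied to an exhaustion of $A^{\sim}\times\Gamma$ with $\varepsilon\downarrow 0$, produces a single $s\in B^{\omega}$ satisfying $fs=s$, $s^{*}s=e$, $s^{*}\Phi^{\sim}(a)s=\Phi^{\sim}(a)e$ for every $a\in A^{\sim}$, and $\beta^{\omega}_{h}(s)=s$ for every $h\in\Gamma$. Using $e\in\Phi(A)^{\prime}$ together with $s^{*}s=e$, the identity
\[
[\Phi(a),s]^{*}[\Phi(a),s]=s^{*}\Phi(a^{*}a)s-s^{*}\Phi(a^{*})s\Phi(a)-\Phi(a^{*})s^{*}\Phi(a)s+\Phi(a^{*})s^{*}s\Phi(a)
\]
collapses to $\Phi(a^{*}a)e-\Phi(a^{*})e\Phi(a)-\Phi(a^{*}a)e+\Phi(a^{*})e\Phi(a)=0$, so $[\Phi(a),s]=0$. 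Combined with fixedness this gives $s\in(B^{\omega}\cap\Phi(A)^{\prime})^{\beta^{\omega}}$, as required.

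The main obstacle is the simultaneous control of all $|E|^{2}$ cross terms appearing in $t_{n}^{*}\Phi^{\sim}(a)t_{n}$: one must couple the choice of the Sato sequence $\{b_{n}\}$ to the F{\o}lner set $E$ by putting $E^{-1}E$ into the outerness set for Proposition~\ref{pro:sato}, and enlarge the Lemma~\ref{lem:szabo} test set to absorb the $\Gamma$-orbit $\{\alpha_{g^{-1}}(F):g\in E\}$ before choosing $\delta$ and $n$. Semiliftability of $\beta^{\omega}$ is used throughout so that $\beta^{\omega}_{g}$ moves sequences pointwise and the averaging and the $\varepsilon$-test are legitimate operations inside $B^{\omega}$.
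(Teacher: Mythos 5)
Your proposal is correct and follows essentially the same route as the paper: Proposition \ref{pro:sato} plus Lemma \ref{lem:szabo} to produce witnesses whose $\Gamma$-translates are approximately orthogonal, a F{\o}lner average, and a reindexing/$\varepsilon$-test to land in $(B^{\omega}\cap\Phi(A)^{\prime})^{\beta^{\omega}}$, with the commutator identity giving membership in $\Phi(A)^{\prime}$. The only (cosmetic) difference is organizational: the paper applies the diagonal argument once in the middle to get an exact $r$ with $r^{*}\beta^{\omega}_{g}(r)=0$, so the averaged element satisfies $s^{*}s=e$ exactly before the final diagonal argument, whereas you keep everything approximate and invoke Kirchberg's $\varepsilon$-test a single time at the end — which works, provided (as you note) the outerness set for Proposition \ref{pro:sato} is taken large enough to cover all differences $g^{-1}g'$ arising from $E\cup\Gamma_{0}E$, or more simply all of $\Gamma\setminus\{\iota\}$ at once, which the proposition permits.
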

\begin{proof}
By similar arguments as in the proof of \cite[Lemma 3.6]{Sza6}, \cite[Proposition 4.5]{MS2} and 
\cite[Proposition 2.2]{Sa2}, we can prove this lemma. We shall give a sketch of a proof for the 
reader's convenience. 

Using Lemma \ref{lem:szabo} and Proposition \ref{pro:sato}, we obtain sequences 
$\{r_n\}_{n\in\mathbb{N}}$ of elements in $B^{\omega}$ and $\{b_n\}_{n\in\mathbb{N}}$ of 
positive elements in $A$ of norm one such that  
$$
fr_n= r_n, \quad \lim_{n\to \infty}\| \Phi (b_n) r_n -r_n \|=0, \quad  
\lim_{n\to\infty} \| r_n^* \Phi^{\sim}(a) r_n - \Phi^{\sim}(a)e\| =0
$$
and 
$$
\lim_{n\to \infty} \| \Phi(b_n) \beta^{\omega}_g (\Phi(b_n))\|=0
$$
for any $a\in A^{\sim}$ and $g\in\Gamma\setminus\{\iota\}$. 
Note that we have $\lim_{n\to \infty}\| r_n^*r_n - e\|=0$ since $\Phi^{\sim}$ is unital. 
Also, we have 
$
\lim_{n\to \infty} \| r_n^* \beta^{\omega}_g (r_n) \| =0
$
for any $g\in\Gamma\setminus\{\iota\}$. By the diagonal argument, there exists a contraction 
$r$ in $B^{\omega}$ such that 
$$
fr=r, \quad r^*\Phi (a) r=\Phi (a)e, \quad r^*r=e \quad \text{and} \quad r^*\beta^{\omega}_g(r)=0
$$ 
for any $a\in A$ and $g\in\Gamma\setminus \{\iota\}$.  Since we have 
\begin{align*}
[r, \Phi (a)]^*[r, \Phi(a)]
&=\Phi(a^*)r^*r\Phi(a)-\Phi(a^*)r^*\Phi(a)r-r^*\Phi(a^*)r\Phi(a)+r^*\Phi(a^*a)r \\
&= \Phi(a^*)e\Phi(a)-\Phi(a^*)\Phi(a)e-\Phi(a^*)e\Phi(a)+\Phi(a^*a)e=0
\end{align*}
for any $a\in A$, $r$ is an element in $B^{\omega}\cap \Phi(A)^{\prime}$. 

Let $\varepsilon>0$ and a finite subset $\Gamma_0 \subset \Gamma$. 
It is enough to show that there exists an element $s$ in 
$B^{\omega}\cap \Phi(A)^{\prime}$ 
such that 
$$
fs=s, \quad s^*s=e  \quad \text{and} \quad \| s - \beta^{\omega}_h(s)\| < \varepsilon
$$
for any $h\in \Gamma_0$ by the diagonal argument. Since $\Gamma$ is amenable, 
there exists a finite subset $\Gamma_1\subset \Gamma$ such that 
$$
\max_{h\in \Gamma_0} \dfrac{|(h\Gamma_1\setminus \Gamma_1)\cup (\Gamma_1\setminus 
h\Gamma_1)|}{|\Gamma_1|}< \varepsilon^2
$$
where $|(h\Gamma_1\setminus \Gamma_1)\cup (\Gamma_1\setminus 
h\Gamma_1)|$ and $|\Gamma_1|$ are the cardinality of 
$(h\Gamma_1\setminus \Gamma_1)\cup (\Gamma_1\setminus h\Gamma_1)$ and 
$\Gamma_1$, respectively. 
Put
$$
s:= \dfrac{1}{\sqrt{|\Gamma_1|}}\sum_{g\in \Gamma_1}\beta^{\omega}_g (r)\in 
B^{\omega}\cap \Phi(A)^{\prime}.
$$
Then $fs=s$ and $s^*s=e$. Also, we have 
\begin{align*}
\| s- \beta^{\omega}_h(s)\| 
&=\|(s-\beta^{\omega}_h(s))^*(s-\beta^{\omega}_h(s)) \|^{1/2} \\
&= \left\| \dfrac{1}{|\Gamma_1|}\left(
\sum_{g\in h\Gamma_1\setminus \Gamma_1}
\beta^{\omega}_g(r^*r) +\sum_{g\in \Gamma_1\setminus h\Gamma_1}\beta^{\omega}_g(r^*r)\right)
\right\|^{1/2} \\
&=
\left\|\dfrac{|(h\Gamma_1\setminus \Gamma_1)\cup (\Gamma_1\setminus h\Gamma_1)|}
{|\Gamma_1|}e\right\|^{1/2}< \varepsilon
\end{align*}
for any $h\in\Gamma_0$. Therefore we obtain the conclusion. 
\end{proof}

Essentially the same argument as above (or as in the proof of \cite[Theorem 4.2]{Sza6}) 
shows the following theorem. 

\begin{thm}\label{thm:kir-si}
Let $A$ be a simple separable non-type I nuclear monotracial C$^*$-algebra and $B$ a monotracial 
C$^*$-algebra with strict comparison, and let $\Phi$ be a homomorphism from 
$A$ to $M(B)^{\omega}$ such that $\tau_{A}=\tau_{B, \omega}\circ \Phi$. 
Assume that $\Phi$ is unital if $A$ is unital. 
Let $\beta^{\omega}$ be a semiliftable action of a countable discrete amenable group on 
$F(\Phi (A), B)$.
Suppose that $\beta^{\omega}|_{\Phi(A)}$ is outer. 
Then $\Phi$ has property (SI) relative to $\beta^{\omega}$, that is, 
if $e$ and $f$ are positive contractions in $F(\Phi (A), B)^{\beta^{\omega}}$ satisfying   
$$
\tau_{B, \omega}(e)=0 \quad \text{and} \quad \inf_{m\in\mathbb{N}}\tau_{B, \omega}(f^m)>0,
$$
then there exists an element $s$ in $F(\Phi (A), B)^{\beta^{\omega}}$ such that 
$fs=s$ and $s^*s=e$. 
\end{thm}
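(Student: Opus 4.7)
The plan is to mirror the proof of Theorem \ref{thm:si}, carrying out the construction in $B^{\omega}\cap \Phi(A)^{\prime}$ with appropriate lifts and then descending to the quotient $F(\Phi(A), B)$. First I would use semiliftability to observe that the family $\{\beta_{g,n}\}$ of automorphisms of $B$ implementing $\beta^{\omega}$ on $F(\Phi(A), B)$ also defines a semiliftable action on $M(B)^{\omega}$, still denoted $\beta^{\omega}$, which necessarily preserves $\Phi(A)$. I then lift the positive contractions $e, f\in F(\Phi(A), B)^{\beta^{\omega}}$ to positive contractions $\tilde e, \tilde f$ in $B^{\omega}\cap \Phi(A)^{\prime}$; by the construction of the induced trace on $F(\Phi(A), B)$ in \cite[Proposition 2.1]{Na4}, I can arrange that $\tau_{B,\omega}(\tilde e)=0$ and $\inf_{m}\tau_{B,\omega}(\tilde f^{m})>0$. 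Note that $\tilde f - \beta^{\omega}_{g}(\tilde f)$ and $\tilde e - \beta^{\omega}_{g}(\tilde e)$ lie in $\mathrm{Ann}(\Phi(A), B^{\omega})$ for every $g\in \Gamma$.

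Second, I would apply Lemma \ref{lem:szabo} and Proposition \ref{pro:sato} exactly as in the proof of Theorem \ref{thm:si}. Since $A$ is simple and $\Phi$ is forced to be injective by the trace condition, the outerness of $\beta^{\omega}|_{\Phi(A)}$ yields, via $\Phi$, a sequence of outer automorphisms of $A$ indexed by $\Gamma\setminus\{\iota\}$. Combining Lemma \ref{lem:szabo} applied to $\tilde e, \tilde f$ with Proposition \ref{pro:sato} applied to these automorphisms, and invoking the Kirchberg $\varepsilon$-test, I obtain a contraction $r\in B^{\omega}\cap \Phi(A)^{\prime}$ satisfying $\tilde f r = r$, $r^{*}\Phi(a)r = \Phi(a)\tilde e$ and $r^{*}r = \tilde e$ for all $a\in A$, together with $r^{*}\beta^{\omega}_{g}(r)=0$ for every $g\neq \iota$. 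That $r\in \Phi(A)^{\prime}$ is forced by the identity $r^{*}\Phi(a)r = \Phi(a)\tilde e$ via the same commutator computation as in Theorem \ref{thm:si}.

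Third, given a finite subset $\Gamma_{0}\subseteq \Gamma$ and $\varepsilon>0$, I would choose a F\o lner set $\Gamma_{1}\subseteq \Gamma$ with $\max_{h\in \Gamma_{0}} |(h\Gamma_{1}\setminus \Gamma_{1})\cup (\Gamma_{1}\setminus h\Gamma_{1})|/|\Gamma_{1}|<\varepsilon^{2}$ and set $s := |\Gamma_{1}|^{-1/2}\sum_{g\in \Gamma_{1}}\beta^{\omega}_{g}(r) \in B^{\omega}\cap \Phi(A)^{\prime}$. The orthogonality $r^{*}\beta^{\omega}_{g}(r)=0$ collapses $s^{*}s$ to $|\Gamma_{1}|^{-1}\sum_{g}\beta^{\omega}_{g}(\tilde e)$, which represents $e$ in $F(\Phi(A), B)$ since $\tilde e$ is $\beta^{\omega}$-invariant modulo the annihilator. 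Similarly $\tilde f s - s = |\Gamma_{1}|^{-1/2}\sum_{g}(\tilde f - \beta^{\omega}_{g}(\tilde f))\beta^{\omega}_{g}(r)$ lies in $\mathrm{Ann}(\Phi(A), B^{\omega})$, because this ideal absorbs multiplication on the right by elements of $B^{\omega}\cap \Phi(A)^{\prime}$; hence $f[s] = [s]$ in $F(\Phi(A), B)$. The F\o lner estimate gives $\|s - \beta^{\omega}_{h}(s)\|<\varepsilon$ for $h\in \Gamma_{0}$, exactly as in the proof of Theorem \ref{thm:si}. A final Kirchberg $\varepsilon$-test performed inside $F(\Phi(A), B)^{\beta^{\omega}}$ then produces an element $s$ satisfying $fs = s$ and $s^{*}s = e$ on the nose.

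The main obstacle is really just the quotient bookkeeping: one must track that each identity established in $B^{\omega}$ survives modulo $\mathrm{Ann}(\Phi(A), B^{\omega})$, using repeatedly that this annihilator is a $\beta^{\omega}$-invariant ideal of $B^{\omega}\cap \Phi(A)^{\prime}$ and that the induced trace on $F(\Phi(A), B)$ is computed from arbitrary lifts. No genuinely new ingredient beyond Theorem \ref{thm:si} is needed.
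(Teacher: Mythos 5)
Your proposal is correct and takes the same route as the paper, whose entire proof of this theorem is the remark that ``essentially the same argument'' as in Theorem \ref{thm:si} applies; your lift-and-descend bookkeeping modulo $\mathrm{Ann}(\Phi(A), B^{\omega})$ is exactly the intended elaboration. The only imprecision is the claim that the lifts $(\beta_{g,n})_n$ give a genuine semiliftable action on $M(B)^{\omega}$ preserving $\Phi(A)$ --- in general they satisfy the group law and preserve $\Phi(A)$ only modulo the annihilator --- but your subsequent computations use them only in that weaker sense, so nothing breaks.
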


Let $A$ be a simple separable nuclear monotracial C$^*$-algebra and $B$ a monotracial 
C$^*$-algebra, and let $\Phi$ be a homomorphism from 
$A$ to $M(B)^{\omega}$ such that $\tau_{A}=\tau_{B, \omega}\circ \Phi$. Assume that $\tau_{B}$ 
is faithful.
Put
$$
M:= \pi_{\tau_{B}}(B)^{''}
$$
and 
$$
\mathcal{M}:= \ell^{\infty}(\mathbb{N}, M)/\{\{x_n\}_{n\in\mathbb{N} }
\in \ell^{\infty}(\mathbb{N}, M)\; |\; \lim_{n\to\omega}\tilde{\tau}_{B}(x_n^*x_n)=0\}
$$
where $\tilde{\tau}_{B}$ is the unique normal extension of $\tau_{B}$ on $M$. 
Note that $\mathcal{M}$ is a von Neumann algebraic ultrapower of $M$, and 
hence $\mathcal{M}$ is a finite factor. 
We identify $M$ with the subalgebra of $\mathcal{M}$ consisting of equivalence 
classes of constant sequences, and set 
$$
M_{\omega}:= \mathcal{M}\cap M^{\prime}. 
$$
Define a homomorphism  $\varrho$ from $B^{\omega}$ to $\mathcal{M}$ by $\varrho ((x_n)_n) =
(\pi_{\tau_B}(x_n)_n)_n$. By Kaplansky's density theorem, we see that $\varrho$ is surjective. 
Since the GNS representation $\pi_{\tau_B}$ of $B$ on $H_{\tau_{B}}$ can be extended to a 
representation of $M(B)$ on $H_{\tau_{B}}$, $\varrho$ can be extended to a homomorphism 
from $M(B)^{\omega}$ to $\mathcal{M}$. 
We denote it by the same symbol $\varrho$ for simplicity. 
Set 
$$
\mathcal{M}(\Phi (A), B):= \mathcal{M}\cap \varrho (\Phi(A))^{\prime}. 
$$
Then $\varrho$ maps $B^{\omega}\cap \Phi (A)^{\prime}$ into $\mathcal{M}(\Phi (A), B)$. 
Essentially the same proof as \cite[Theorem 3.1]{MS3} (see also \cite[Theorem 3.3]{KR} and 
\cite[Proposition 3.4]{Na3}) shows the following proposition. 

\begin{pro}\label{pro:surjective}
With notation as above, 
$\varrho|_{B^{\omega}\cap \Phi (A)^{\prime}}: B^{\omega}\cap 
\Phi (A)^{\prime}\to \mathcal{M}(\Phi (A), B)$ is surjective. 
\end{pro}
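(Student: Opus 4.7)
The plan is to follow the strategy of \cite[Theorem 3.1]{MS3} cited in the statement. Set $J_{B,\omega} := \ker(\varrho|_{B^\omega})$, the trace-kernel ideal of $B^\omega$, namely the closed two-sided ideal consisting of those $(x_n)_n$ with $\lim_{n\to\omega}\tau_B(x_n^*x_n)=0$. The task is: given $y \in \mathcal{M}(\Phi(A),B)$, to produce a lift $x \in B^\omega \cap \Phi(A)^{\prime}$ with $\varrho(x) = y$.

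First, by Kaplansky's density theorem $\varrho$ is surjective, so one can choose a bounded lift $x_0 \in B^\omega$ with $\varrho(x_0) = y$ and $\|x_0\| \le \|y\|$. Since $y$ commutes with every $\varrho(\Phi(a))$, each commutator $[x_0, \Phi(a)]$ lies in $J_{B,\omega}$. Fix a countable dense subset $\{a_k\}_{k\in\mathbb{N}} \subset A$. By Kirchberg's $\varepsilon$-test together with a reindexing/diagonal argument, it suffices to show that for every finite $F\subset A$ and every $\varepsilon>0$ there exists $x_{F,\varepsilon}\in B^\omega$ with $\varrho(x_{F,\varepsilon}) = y$ and $\|[x_{F,\varepsilon},\Phi(a)]\| < \varepsilon$ for all $a\in F$.

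The main technical step is then to verify that $J_{B,\omega}$ is a $\sigma$-ideal in $B^\omega$ in Kirchberg's sense: for every separable C$^*$-subalgebra $D\subset B^\omega$, there is a positive contraction $e\in J_{B,\omega}\cap D^{\prime}$ with $ed = d$ for every $d\in J_{B,\omega}\cap D$. Applying this with a separable C$^*$-subalgebra of $M(B)^\omega$ containing $\Phi(A)$, $x_0$, and the commutators $\{[x_0,\Phi(a)] : a\in F\}$, one forms a perturbation of the shape
\[
x_{F,\varepsilon} := (1-e)^{1/2}\,x_0\,(1-e)^{1/2} + e^{1/2}\,x_0^{\prime}\,e^{1/2},
\]
where $x_0^{\prime}$ is an auxiliary lift. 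Because the modification lies entirely in $J_{B,\omega}$ one still has $\varrho(x_{F,\varepsilon}) = y$, while the cutoff by $(1-e)^{1/2}$ together with $e$ absorbing the commutators $[x_0,\Phi(a)]$ for $a \in F$ produces the estimate $\|[x_{F,\varepsilon},\Phi(a)]\| < \varepsilon$.

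I expect the main obstacle to be establishing the $\sigma$-ideal property of $J_{B,\omega}$ in the present relative setting, where $\Phi$ takes values in $M(B)^\omega$ rather than inside $B^\omega$ itself. This requires a careful quasicentral approximate unit argument for the 2-norm topology combined with a diagonal trick, exploiting separability of $\Phi(A)$ and the 2-norm density of $B$ in $M$. Once that structural fact is in hand, the $\varepsilon$-test reduction and the absorption computation proceed essentially as in \cite[Theorem 3.1]{MS3} and \cite[Proposition 3.4]{Na3}.
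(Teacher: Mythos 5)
Your proposal follows essentially the same route as the paper, which simply invokes the argument of \cite[Theorem 3.1]{MS3}: lift $y$ to a bounded $x_0\in B^{\omega}$, observe that the commutators $[x_0,\Phi(a)]$ lie in the trace-kernel ideal, use the $\sigma$-ideal property (valid here because the trace-kernel ideal is also an ideal in $M(B)^{\omega}$, so the quasicentral contraction $e$ can be taken to commute with the separable set $\Phi(A)$) to kill them by compression, and finish with Kirchberg's $\varepsilon$-test. One small correction: the summand $e^{1/2}x_0^{\prime}e^{1/2}$ is not only unnecessary but harmful, since $[e^{1/2}x_0^{\prime}e^{1/2},\Phi(a)]=e^{1/2}[x_0^{\prime},\Phi(a)]e^{1/2}$ need not be small; the compression $(1-e)^{1/2}x_0(1-e)^{1/2}$ alone already lifts $y$ (the difference from $x_0$ lies in the kernel) and has vanishing commutators because $(1-e)^{1/2}[x_0,\Phi(a)]=0$.
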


The same proof as \cite[Proposition 2.1]{Na4} shows 
$\mathrm{Ann}(\Phi (A), B^{\omega})\subseteq \mathrm{ker}\; \varrho$. Hence 
$\varrho$ induces a homomorphism $[\varrho]$ from $F(\Phi(A), B)$ to $\mathcal{M}(\Phi (A), B)$. 
The proposition above implies that $[\varrho]$ is surjective. 
Essentially the same proof as \cite[Proposition 2.5]{Na4} shows the following proposition. 

\begin{pro}\label{pro:factor}
Let $A$ be a simple separable nuclear monotracial C$^*$-algebra and $B$ a monotracial 
C$^*$-algebra, and let $\Phi$ be a homomorphism from 
$A$ to $M(B)^{\omega}$ such that $\tau_{A}=\tau_{B, \omega}\circ \Phi$. Assume that $\tau_{B}$ 
is faithful. Then $\mathcal{M}(\Phi (A), B)$ is a finite factor. 
\end{pro}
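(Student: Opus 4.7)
The plan is to follow the strategy of \cite[Proposition 2.5]{Na4}, adapted to our relative setup. Since the paper has already noted that $\mathcal{M}$ is a finite factor (as a tracial ultrapower of the finite factor $M$), the unital von Neumann subalgebra $\mathcal{M}(\Phi(A),B) = \mathcal{M}\cap \varrho(\Phi(A))'$ of $\mathcal{M}$ is automatically finite, and the entire task reduces to showing factoriality.

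If $A$ is non-unital, I would first replace $\Phi$ by its canonical unital extension $\Phi^{\sim}\colon A^{\sim}\to M(B)^{\omega}$; since $\varrho(\Phi^{\sim}(A^{\sim})) = \varrho(\Phi(A)) + \mathbb{C}\cdot 1_{\mathcal{M}}$, the relative commutant in $\mathcal{M}$ is unchanged, so we may assume $A$ and $\Phi$ are unital. Set $N := \varrho(\Phi(A))''\subseteq \mathcal{M}$. Simplicity of $A$ combined with $\tau_{A}=\tau_{B,\omega}\circ\Phi\neq 0$ forces $\varrho\circ\Phi$ to be injective on $A$, so by uniqueness of the GNS construction there is a canonical trace-preserving isomorphism between $(N,\tau_{B,\omega}|_{N})$ and $(\pi_{\tau_{A}}(A)'',\tilde{\tau}_{A})$. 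Uniqueness of $\tau_{A}$ makes $\tilde{\tau}_{A}$ the unique normal tracial state on $\pi_{\tau_{A}}(A)''$, hence extreme, so $N$ is a finite factor; Connes' theorem on injectivity of nuclear C$^{*}$-algebras then upgrades $N$ to a hyperfinite finite factor.

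Since $N$ is the weak closure of $\varrho(\Phi(A))$ in $\mathcal{M}$, we have $\mathcal{M}\cap \varrho(\Phi(A))' = \mathcal{M}\cap N'$, so it remains to show that $Z(\mathcal{M}\cap N')=\mathbb{C}\cdot 1$. This is the main obstacle, and it is exactly the content of \cite[Proposition 2.5]{Na4} in the special case $A=B$, $\Phi=\mathrm{id}_{B}$. The argument there proceeds by writing the hyperfinite factor $N$ as the weak closure $\overline{\bigcup_{k}N_{k}}^{w}$ of an increasing chain of finite-dimensional subfactors, observing that each $\mathcal{M}\cap N_{k}'$ is already a finite factor via the Dixmier-type splitting $\mathcal{M}\cong N_{k}\otimes(\mathcal{M}\cap N_{k}')$, lifting any candidate central element $z\in Z(\mathcal{M}\cap N')$ to a bounded sequence in $M$, and then using a reindexing argument together with the countable cofinality of $\omega$ to average away the non-central part. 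Only the abstract properties that $N$ is a hyperfinite finite subfactor and $\mathcal{M}$ is a tracial ultrapower of a finite factor are used, both of which we have secured, so the proof transports without change.
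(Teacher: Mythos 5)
Your proposal is correct and follows essentially the same route as the paper, whose entire proof of this proposition is the one-line remark that ``essentially the same proof as [Na4, Proposition 2.5]'' applies; you simply make explicit the reduction that justifies this transfer (passing to the unital extension, identifying $N=\varrho(\Phi(A))''$ with $\pi_{\tau_A}(A)''$ via GNS uniqueness and trace preservation, and invoking monotraciality plus Connes' theorem to see that $N$ is a hyperfinite finite factor, so that $\mathcal{M}(\Phi(A),B)=\mathcal{M}\cap N'$ falls under the ultrapower argument of [Na4]). No gap: the supporting claims (finiteness of a von Neumann subalgebra of the finite factor $\mathcal{M}$, injectivity of $\varrho\circ\Phi$ from simplicity, and the insensitivity of the relative commutant to adjoining the unit) are all sound.
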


Let $\beta^{\omega}$ be a semiliftable action of $\Gamma$ on $M(B)^{\omega}$ such that  
$\beta_g^{\omega}(\Phi(A))=\Phi (A)$ for any $g\in \Gamma$. 
Since $B$ is monotracial, 
we have $\beta_{g}^{\omega}(\mathrm{ker}\; \varrho)=\mathrm{ker}\; \varrho$ 
for any $g\in\Gamma$. 
Hence $\beta^{\omega}$ induces actions on $\mathcal{M}$ and 
$\mathcal{M}(\Phi (A), B)$. We denote them by $\tilde{\beta}^{\omega}$. 
Note that we have $\varrho \circ \beta_g^{\omega}= \tilde{\beta}^{\omega}_g \circ \varrho$ for any 
$g\in\Gamma$. 

\begin{pro}\label{pro:equiv-surjective}
With notation as above,  $\varrho|_{(B^{\omega}\cap \Phi (A)^{\prime})^{\beta^{\omega}}}: (B^{\omega}\cap 
\Phi (A)^{\prime})^{\beta^{\omega}} \to \mathcal{M}(\Phi (A), B)^{\tilde{\beta}^{\omega}}$ and 
$[\varrho]|_{F(\Phi (A), B)^{\beta^{\omega}}}: F( \Phi (A), B)^{\beta^{\omega}} \to 
\mathcal{M}(\Phi (A), B)^{\tilde{\beta}^{\omega}}$ are surjective. 
\end{pro}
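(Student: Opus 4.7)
The plan is to first establish surjectivity of $\varrho|_{(B^{\omega}\cap\Phi(A)^{\prime})^{\beta^{\omega}}}$, and then deduce surjectivity of $[\varrho]|_{F(\Phi(A),B)^{\beta^{\omega}}}$ as a formal consequence. Let $X\in\mathcal{M}(\Phi(A),B)^{\tilde{\beta}^{\omega}}$. Proposition~\ref{pro:surjective} supplies a lift $Y_0\in B^{\omega}\cap\Phi(A)^{\prime}$ with $\varrho(Y_0)=X$, which may be taken to satisfy $\|Y_0\|\leq\|X\|$ by the standard lifting argument for surjective $*$-homomorphisms. Since $\tilde{\beta}^{\omega}_g(X)=X$ for every $g$, each translate $\beta^{\omega}_g(Y_0)$ is again a lift of $X$, so the task reduces to averaging these into a genuinely $\beta^{\omega}$-fixed element while staying in the fibre over $X$.

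Using amenability of $\Gamma$ (the hypothesis underlying the surrounding theorems of this section), I fix an enumeration $\Gamma=\{g_1,g_2,\ldots\}$, choose F{\o}lner sets $F_n\subset\Gamma$ with $\max_{1\leq i\leq n}|g_iF_n\triangle F_n|/|F_n|<1/n$, and form the averages
\[
Y_n:=\frac{1}{|F_n|}\sum_{g\in F_n}\beta^{\omega}_g(Y_0)\in B^{\omega}\cap\Phi(A)^{\prime}.
\]
Linearity of $\varrho$ together with $\tilde{\beta}^{\omega}$-invariance of $X$ yields $\varrho(Y_n)=X$, while the F{\o}lner condition gives $\|\beta^{\omega}_{g_i}(Y_n)-Y_n\|<2\|Y_0\|/n$ for $i\leq n$. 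To pass from approximate to exact invariance I apply Kirchberg's $\varepsilon$-test (the diagonal argument), exploiting semiliftability of $\beta^{\omega}$ to phrase invariance on representing sequences, and a countable dense subset of $A$ to phrase membership in $B^{\omega}\cap\Phi(A)^{\prime}$. The countable family of test conditions consists of (i) approximate commutation with $\Phi(a)$ for $a$ in the dense subset, (ii) norm-invariance under each $\beta^{\omega}_{g_i}$, and (iii) the trace-seminorm condition $\lim_{m\to\omega}\tilde{\tau}_B((y_m-y_{0,m})^{*}(y_m-y_{0,m}))=0$ for a fixed representing sequence $(y_{0,m})_m$ of $Y_0$, which together encode $\varrho(Y)=X$. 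Each $Y_n$ satisfies all of these approximately to tolerance $O(1/n)$, so the $\varepsilon$-test delivers a single $Y\in(B^{\omega}\cap\Phi(A)^{\prime})^{\beta^{\omega}}$ with $\varrho(Y)=X$.

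For the surjectivity of $[\varrho]|_{F(\Phi(A),B)^{\beta^{\omega}}}$, observe that $\mathrm{Ann}(\Phi(A),B^{\omega})$ is $\beta^{\omega}$-invariant (immediate from $\beta^{\omega}_g(\Phi(A))=\Phi(A)$), so the quotient map $\pi\colon B^{\omega}\cap\Phi(A)^{\prime}\to F(\Phi(A),B)$ is $\beta^{\omega}$-equivariant and sends $(B^{\omega}\cap\Phi(A)^{\prime})^{\beta^{\omega}}$ into $F(\Phi(A),B)^{\beta^{\omega}}$. Combined with $[\varrho]\circ\pi=\varrho$, this transfers the surjectivity just established. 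The main obstacle in the argument is the diagonal extraction: condition (iii) lives in a topology weaker than the operator norm, but it is upper semicontinuous in the representing sequence along $\omega$ and can be packaged as a countable family of threshold tests against $\tau_B$, which fits the hypotheses of Kirchberg's $\varepsilon$-test alongside the norm conditions (i) and (ii).
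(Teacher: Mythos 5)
Your averaging strategy is sound as far as it goes, and your reduction of the statement about $F(\Phi(A),B)^{\beta^{\omega}}$ to the one about $(B^{\omega}\cap\Phi(A)^{\prime})^{\beta^{\omega}}$ via the equivariant quotient map is exactly what the paper does. The substantive issue is your very first move: the proposition is stated ``with notation as above'', and the paragraph preceding it assumes only that $\Gamma$ is a countable discrete group and that $\beta^{\omega}$ is semiliftable with $\beta^{\omega}_g(\Phi(A))=\Phi(A)$; amenability is \emph{not} among the hypotheses (it first reappears in Propositions \ref{pro:strict-comparison-notk} and \ref{pro:strict-comparison}). Your F{\o}lner averaging therefore proves a weaker statement than the one asserted. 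The paper's proof avoids amenability entirely: given a lift $x$ of $y$, it forms the separable $\beta^{\omega}$-invariant C$^*$-subalgebra $C$ generated by the orbit $\{\beta^{\omega}_g(x)\;|\;g\in\Gamma\}$, observes that all translates of $x$ agree modulo $J\cap C$, where $J=\mathrm{ker}\,\varrho|_{B^{\omega}\cap\Phi(A)^{\prime}}$ (because $y$ is $\tilde{\beta}^{\omega}$-fixed), and invokes Kasparov's technical lemma \cite[Lemma 1.4]{Kas} to obtain an approximately $\beta^{\omega}$-invariant approximate unit of $J\cap C$; the diagonal argument then yields a $\beta^{\omega}$-fixed positive contraction $e\in J$ with $ea=a$ for all $a\in J\cap C$, and $z=x-ex$ is an exactly invariant lift of $y$. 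This replaces your two-stage ``approximately invariant lifts plus $\varepsilon$-test'' scheme by a single correction of the lift and works for an arbitrary countable discrete $\Gamma$.

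If one is content to prove the proposition only for amenable $\Gamma$ (which covers every application in this paper), your argument does go through: the averages $Y_n$ are exact lifts of $X$, lie exactly in $B^{\omega}\cap\Phi(A)^{\prime}$, and are $2\|Y_0\|/n$-invariant under $\beta^{\omega}_{g_i}$ for $i\leq n$, while the three families of test conditions you list are all of the form handled by Kirchberg's $\varepsilon$-test (in particular the trace condition is satisfied exactly, with value $0$, by every $Y_n$, so it causes no difficulty). But as written the proof does not establish the proposition in the stated generality.
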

\begin{proof}
Let $y\in \mathcal{M}(\Phi (A), B)^{\tilde{\beta}^{\omega}}$, then 
there exists an element $x$ in $B^{\omega}\cap \Phi (A)^{\prime}$ such that $\varrho (x)=y$ 
by Proposition \ref{pro:surjective}. 
Define $C$ to be a C$^*$-subalgebra of $B^{\omega}\cap \Phi (A)^{\prime}$
generated by $\{\beta_g^{\omega} (x)\; |\; g\in \Gamma \}$, and put 
$J:=\mathrm{ker}\; \varrho|_{B^{\omega}\cap \Phi (A)^{\prime}}$. 
Since $J\cap C$ is a C$^*$-subalgebra of a separable C$^*$-algebra $C$ and $\beta^{\omega}$ 
induces an action on $C$, \cite[Lemma 1.4]{Kas} ($\varphi$ is taken to be the zero map) implies that 
there exists a sequence $\{e_n\}_{n\in\mathbb{N}}$ of 
positive contractions in $J\cap C$ such that 
$$
\lim_{n\to\infty}\|e_na-a \|=0 \quad \text{and} \quad 
\lim_{n\to\infty} \| \beta_g^{\omega} (e_n)-e_n \| =0
$$
for any $a\in J\cap C$ and $g\in \Gamma$. 
By the diagonal argument, we obtain a positive contraction $e$ in 
$J^{\beta^{\omega}}$ such that $ea=a$ for any $a\in J\cap C$. 
Put $z:= x-ex$, then we have $z\in (B^{\omega}\cap \Phi (A)^{\prime})^{\beta^{\omega}}$ and 
$\varrho (z)=y$. Therefore $\varrho|_{(B^{\omega}\cap \Phi (A)^{\prime})^{\beta^{\omega}}}: 
(B^{\omega}\cap \Phi (A)^{\prime})^{\beta^{\omega}} \to 
\mathcal{M}(\Phi (A), B)^{\tilde{\beta}^{\omega}}$ is surjective. 
Also, this implies that $[\varrho]|_{F(\Phi (A), B)^{\beta^{\omega}}}: F( \Phi (A), B)^{\beta^{\omega}} \to 
\mathcal{M}(\Phi (A), B)^{\tilde{\beta}^{\omega}}$ is surjective.
\end{proof}

Using Theorem \ref{thm:si} or Theorem \ref{thm:kir-si} and Proposition \ref{pro:equiv-surjective}
instead of \cite[Proposition 4.5]{MS2} and \cite[Theorem 4.3]{MS2}, 
the same argument as the proofs of \cite[Lemma 4.7]{MS2} and \cite[Proposition 4.8]{MS2} 
show the following propositions. 
See also the proofs of \cite[Proposition 3.3]{MS3} and \cite[Proposition 3.8]{Na3}.

\begin{pro}\label{pro:strict-comparison-notk}
Let $A$ be a simple separable non-type I nuclear monotracial C$^*$-algebra and $B$ a monotracial 
C$^*$-algebra with strict comparison, and let $\Phi$ be a homomorphism from 
$A$ to $M(B)^{\omega}$ such that $\tau_{A}=\tau_{B, \omega}\circ \Phi$. 
Assume that $\tau_{B}$ is faithful, and $\Phi$ is unital if $A$ is unital. 
Let $\beta^{\omega}$ be a semiliftable action of a countable discrete amenable group on 
$M(B)^{\omega}$ such that $\beta^{\omega}_g(\Phi(A))=\Phi(A)$ for any $g\in \Gamma$. 
Suppose that  $\beta_g^{\omega}|_{\Phi(A)}$ is outer for any $g\in \Gamma\setminus \{\iota\}$. 
If $\mathcal{M}(\Phi (A), B)^{\tilde{\beta}^{\omega}}$ is a factor, 
then $(B^{\omega}\cap \Phi(A)^{\prime})^{\beta^{\omega}}$ is 
monotracial and has strict comparison. 
Furthermore, if $a$ and $b$ are positive elements in 
$(B^{\omega}\cap \Phi(A)^{\prime})^{\beta^{\omega}}$ satisfying $d_{\tau_{B, \omega}}(a)< 
d_{\tau_{B, \omega}}(b)$, then there exists an element $r$ in 
$(B^{\omega}\cap \Phi(A)^{\prime})^{\beta^{\omega}}$ such that $r^*br=a$. 
\end{pro}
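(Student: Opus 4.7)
My plan is to follow the strategy of \cite[Lemma 4.7]{MS2} and \cite[Proposition 4.8]{MS2} essentially verbatim, substituting the equivariant analogues that the paper has just built: Theorem \ref{thm:si} (equivariant property (SI) at the level of $B^{\omega}\cap\Phi(A)^{\prime}$) replaces \cite[Proposition 4.5]{MS2}, and Proposition \ref{pro:equiv-surjective} (equivariant surjectivity of $\varrho$ onto the fixed-point part of $\mathcal{M}(\Phi(A),B)$) replaces \cite[Theorem 4.3]{MS2}. The hypothesis that $\mathcal{M}(\Phi(A),B)^{\tilde\beta^{\omega}}$ is a factor, combined with its inclusion into the finite factor $\mathcal{M}(\Phi(A),B)$ from Proposition \ref{pro:factor}, ensures that it is itself a finite factor carrying a unique tracial state, namely the restriction of $\tau_{B,\omega}$.

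For the comparison assertion, let $a,b\in (B^{\omega}\cap\Phi(A)^{\prime})^{\beta^{\omega}}_{+}$ satisfy $d_{\tau_{B,\omega}}(a)<d_{\tau_{B,\omega}}(b)$ and fix $\varepsilon>0$; it suffices to produce $r_{\varepsilon}$ in the fixed-point algebra with $r_{\varepsilon}^{*}br_{\varepsilon}=(a-\varepsilon)_{+}$, as a diagonal argument then gives the stated $r$. Comparison of spectral projections inside the finite factor $\mathcal{M}(\Phi(A),B)^{\tilde\beta^{\omega}}$ yields an element $v$ there with $v^{*}\varrho(b)v=\varrho((a-\varepsilon)_{+})$. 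Lifting $v$ via Proposition \ref{pro:equiv-surjective}, I obtain $\tilde v\in(B^{\omega}\cap\Phi(A)^{\prime})^{\beta^{\omega}}$ for which $d:=\tilde v^{*}b\tilde v-(a-\varepsilon)_{+}$ lies in the kernel of $\varrho$, i.e.\ is tracial-null. To upgrade this to an exact equality, I would choose a positive contraction $f\in(B^{\omega}\cap\Phi(A)^{\prime})^{\beta^{\omega}}$ coming from a spectral cut of $b$ below $\tilde v\tilde v^{*}$ and satisfying $\inf_{m}\tau_{B,\omega}(f^{m})>0$, and then apply Theorem \ref{thm:si} with $e$ built from (the positive part of) $d$ to obtain $s\in(B^{\omega}\cap\Phi(A)^{\prime})^{\beta^{\omega}}$ with $fs=s$ and $s^{*}s$ absorbing $d$; the Matui-Sato formula $r_{\varepsilon}=\tilde v+\lambda s$ then delivers the required identity.

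For monotraciality, the restriction of $\tau_{B,\omega}$ to $(B^{\omega}\cap\Phi(A)^{\prime})^{\beta^{\omega}}$ is a tracial state; any other tracial state $\sigma$ descends through the quotient to a tracial state on the finite factor $\mathcal{M}(\Phi(A),B)^{\tilde\beta^{\omega}}$, hence coincides with $\tau_{B,\omega}$ on the image of $\varrho$. The strict comparison just established, together with another application of Theorem \ref{thm:si}, shows that every positive contraction in $(\ker\varrho)\cap(B^{\omega}\cap\Phi(A)^{\prime})^{\beta^{\omega}}$ is Cuntz-dominated by elements of arbitrarily small $\tau_{B,\omega}$-dimension, so $\sigma$ must vanish there; the absence of unbounded traces follows because the algebra is unital (when $A$ is $\sigma$-unital, $F(\Phi(A),B)$ is unital, and the analogous cut-off argument handles $B^{\omega}\cap\Phi(A)^{\prime}$). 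The main obstacle, as in the original Matui-Sato proof, will be the careful tuning of the parameters in the SI step so that $f$ sits below a suitable spectral projection of $b$ while $e$ controls precisely the tracial-null defect $d$; once this alignment is arranged the rest is bookkeeping inherited from \cite{MS2}, \cite{MS3} and \cite{Na3}.
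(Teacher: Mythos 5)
Your proposal follows exactly the route the paper takes: the paper's proof of Proposition \ref{pro:strict-comparison-notk} consists precisely of running the arguments of \cite[Lemma 4.7]{MS2} and \cite[Proposition 4.8]{MS2} with Theorem \ref{thm:si} substituted for \cite[Proposition 4.5]{MS2} and Proposition \ref{pro:equiv-surjective} for \cite[Theorem 4.3]{MS2}, using the factoriality hypothesis on $\mathcal{M}(\Phi(A),B)^{\tilde{\beta}^{\omega}}$ in the same way. The additional detail you sketch (lifting comparison from the finite factor, correcting the tracial-null defect via property (SI), and deducing monotraciality from uniqueness of the trace on the factor plus vanishing on $\ker\varrho$) is consistent with that argument, so the proposal is correct and essentially identical to the paper's proof.
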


\begin{pro}\label{pro:strict-comparison}
Let $A$ be a simple separable non-type I nuclear monotracial C$^*$-algebra and $B$ a monotracial 
C$^*$-algebra with strict comparison, and let $\Phi$ be a homomorphism from 
$A$ to $M(B)^{\omega}$ such that $\tau_{A}=\tau_{B, \omega}\circ \Phi$. 
Assume that $\tau_{B}$ is faithful, and $\Phi$ is unital if $A$ is unital. 
Let $\beta^{\omega}$ be a semiliftable action of a countable discrete amenable group on 
$F(\Phi (A), B)$.  
Suppose that $\beta_g^{\omega}|_{\Phi(A)}$ is outer for any 
$g\in \Gamma\setminus \{\iota\}$. If $\mathcal{M}(\Phi (A), B)^{\tilde{\beta}^{\omega}}$ is a 
factor, then $F(\Phi (A), B)^{\beta^{\omega}}$ is monotracial and has strict comparison. 
Furthermore, if $a$ and $b$ are positive elements in 
$F(\Phi (A), B)^{\beta^{\omega}}$ satisfying $d_{\tau_{B, \omega}}(a)< 
d_{\tau_{B, \omega}}(b)$, then there exists an element $r$ in 
$F(\Phi (A), B)^{\beta^{\omega}}$ such that $r^*br=a$. 
\end{pro}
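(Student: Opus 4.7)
The plan is to follow the Matui--Sato strategy of \cite[Lemma 4.7 and Proposition 4.8]{MS2}, with the role of the non-equivariant property (SI) replaced by Theorem \ref{thm:kir-si} and the role of the non-equivariant surjection onto the tracial von Neumann ultrapower replaced by the equivariant surjection in Proposition \ref{pro:equiv-surjective}. The hypotheses give all needed ingredients: $\mathcal{M}(\Phi(A), B)$ is a finite factor by Proposition \ref{pro:factor}, so $\mathcal{M}(\Phi(A), B)^{\tilde{\beta}^{\omega}}$ is a finite von Neumann algebra (a factor by assumption) with a unique faithful tracial state $\tilde{\tau}_{B,\omega}$ that is compatible with $\tau_{B,\omega}$ through $[\varrho]$.

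For the comparison assertion I would fix positive contractions $a, b$ in $F(\Phi(A), B)^{\beta^{\omega}}$ with $d_{\tau_{B,\omega}}(a) < d_{\tau_{B,\omega}}(b)$. Passing to $[\varrho](a), [\varrho](b)$ in the factor $\mathcal{M}(\Phi(A), B)^{\tilde{\beta}^{\omega}}$, comparison there yields, after small spectral cutoffs, an element $v$ in the factor with $v^*[\varrho](b)v = [\varrho](a)$. By the equivariant surjectivity of Proposition \ref{pro:equiv-surjective}, I can lift $v$ to some $r_0 \in F(\Phi(A), B)^{\beta^{\omega}}$ with $r_0^* b r_0 - a$ lying in the kernel of $[\varrho]$, hence a self-adjoint element of trace zero. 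Writing this error as $e_{+} - e_{-}$ with $\tau_{B,\omega}(e_{\pm}) = 0$, and noting that some power of $b$ has strictly positive tracial limit in the relevant sense, Theorem \ref{thm:kir-si} provides corrective elements in the fixed-point algebra that absorb $e_{\pm}$ via $b$. A final $2 \times 2$ matrix assembly, identical in form to \cite[Lemma 4.7]{MS2}, combines $r_0$ with the corrective elements to produce $r \in F(\Phi(A), B)^{\beta^{\omega}}$ with $r^* b r = a$ on the nose.

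For monotraciality, $\tau_{B,\omega}$ is a tracial state on $F(\Phi(A), B)^{\beta^{\omega}}$. Uniqueness follows by showing that any tracial state $\sigma$ vanishes on each positive $e$ with $\tau_{B,\omega}(e) = 0$: for each $\varepsilon > 0$ and any positive $f$ in $F(\Phi(A), B)^{\beta^{\omega}}$ with $\inf_m \tau_{B,\omega}(f^m) > 0$, Theorem \ref{thm:kir-si} yields $s$ with $s^*s = (e-\varepsilon)_{+}$ and $fs = s$, so $\sigma((e-\varepsilon)_{+}) = \sigma(sfs^*) \le \sigma(f)$; choosing $f$ of arbitrarily small $\sigma$-value (using that $F(\Phi(A), B)^{\beta^{\omega}}$ admits full positive contractions of arbitrarily small $\tau_{B,\omega}$-value, for instance through the comparison already proved) and letting $\varepsilon \to 0$ gives $\sigma(e) = 0$. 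Hence $\sigma$ descends through $[\varrho]|_{F(\Phi(A), B)^{\beta^{\omega}}}$, and uniqueness of the trace on the factor $\mathcal{M}(\Phi(A), B)^{\tilde{\beta}^{\omega}}$ forces $\sigma = \tau_{B,\omega}$.

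The main obstacle is performing the corrective step of the comparison argument equivariantly: in the non-equivariant setting one freely manipulates central-sequence lifts, whereas here every intermediate element must lie in the fixed-point subalgebra. This is precisely what Theorem \ref{thm:kir-si} and Proposition \ref{pro:equiv-surjective} are designed to supply, so no new technical input beyond bookkeeping should be needed, but the verification that the $2 \times 2$ matrix assembly of \cite[Lemma 4.7]{MS2} respects $\beta^{\omega}$-invariance of each constituent is the place where care is required.
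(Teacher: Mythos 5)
Your proposal is correct and follows essentially the same route as the paper: the paper's proof of this proposition is precisely the instruction to rerun the arguments of \cite[Lemma 4.7]{MS2} and \cite[Proposition 4.8]{MS2} with Theorem \ref{thm:kir-si} (equivariant property (SI)) and Proposition \ref{pro:equiv-surjective} (equivariant surjectivity onto $\mathcal{M}(\Phi(A),B)^{\tilde{\beta}^{\omega}}$) substituted for their non-equivariant counterparts, which is exactly what you do. Your write-up in fact supplies more of the intermediate detail (the lift-and-correct step and the trace-uniqueness argument) than the paper records.
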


The following proposition is an immediate corollary of \cite[Lemma 4.1]{MS2}. 
Note that we do not assume that $\beta$ is strongly outer. 
We shall give a proof for the reader's convenience. 

\begin{pro}\label{pro:MS-lemma-4.1}
Let $B$ be a simple separable non-type I nuclear monotracial C$^*$-algebra, and let $\beta$ be 
an action of a countable discrete amenable group $\Gamma$ on $B$. 
Then $M_{\omega}^{\tilde{\beta}}=(\pi_{\tau_{B}}(B)^{''}_{\omega})^{\tilde{\beta}}$ is a II$_1$ factor. 
\end{pro}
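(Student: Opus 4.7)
The plan is to identify $M = \pi_{\tau_B}(B)''$ with the hyperfinite II$_1$ factor $R$ and then invoke the Matui--Sato result \cite[Lemma 4.1]{MS2}. First, I would verify that $M$ is the hyperfinite II$_1$ factor. Monotraciality of $B$ together with simplicity forces $\tau_B$ to be faithful, so $M$ is a finite von Neumann algebra carrying a faithful normal tracial state $\tilde{\tau}_B$. Simplicity of $B$ makes $M$ a factor, and the non-type I assumption, combined with finiteness, puts $M$ in type II$_1$ and makes it separable (since $B$ is). Nuclearity of $B$ transfers to injectivity (semidiscreteness) of $M$, so by Connes' classification $M \cong R$.

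Next I would note that $\mathcal{M} = M^\omega$ (the tracial ultrapower defined in the excerpt) is a II$_1$ factor, and since $R$ is McDuff, $M_\omega = \mathcal{M} \cap M'$ is itself a II$_1$ factor. The trace-preserving action $\tilde{\beta}$ extends to $\mathcal{M}$ (this is why we factor out the trace-null sequences) and commutes with the embedding of $M$, so it restricts to an action of $\Gamma$ on $M_\omega$.

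For the core assertion, that $M_\omega^{\tilde{\beta}}$ is again a II$_1$ factor, I would appeal to \cite[Lemma 4.1]{MS2} and merely sketch how it specializes. The argument there rests on an Ocneanu-type Rohlin theorem for actions of discrete amenable groups on $R$: for each finite subset $K \subseteq M_\omega$ and each $\varepsilon > 0$, one can, for any suitable Følner set $F \subset \Gamma$, produce a partition of unity $\{p_g\}_{g \in F}$ of projections in $M_\omega$ that approximately commute with $K$ and satisfy $\tilde{\beta}_h(p_g) \approx p_{hg}$ whenever both $g$ and $hg$ lie in $F$. Averaging a given $z \in M_\omega^{\tilde{\beta}} \cap (M_\omega^{\tilde{\beta}})'$ against such towers, one shows $z$ must equal a scalar; the same towers exhibit projections of every trace $\theta \in [0,1]$, ruling out a type I summand and yielding the II$_1$ conclusion.

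The main obstacle, and the reason the remark emphasizes that $\beta$ is not assumed strongly outer, is that one cannot use classification of outer actions or the simple fact that a crossed product by an outer action of an amenable group on a factor is a factor. Instead, the argument must leverage amenability and the McDuff property of $R$ to extract approximately invariant central sequences even when $\tilde{\beta}$ has inner pieces; these are precisely what Ocneanu's Rohlin theorem delivers in the non-outer setting, and inputting them into the averaging scheme above is the step that genuinely requires the countable discrete amenable hypothesis on $\Gamma$.
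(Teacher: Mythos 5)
Your setup is fine: $M=\pi_{\tau_B}(B)''$ is the injective II$_1$ factor (the paper begins the same way), and appealing to \cite[Lemma 4.1]{MS2} is the right idea. The gap is in how you handle the possible failure of outerness, which is exactly the point of the proposition (the paper's preceding remark stresses that $\beta$ is \emph{not} assumed strongly outer). You assert that Ocneanu's Rohlin theorem ``delivers approximately invariant central sequences even when $\tilde{\beta}$ has inner pieces.'' It does not: Ocneanu's theorem requires the action to be centrally free, and the towers you describe cannot exist for the full group when some $h\neq\iota$ has $\tilde{\beta}_h$ inner on $M$. Indeed such an $h$ acts trivially on $M_{\omega}=\mathcal{M}\cap M'$ (if $\tilde{\beta}_h=\mathrm{Ad}(u)$ with $u\in M$, then $uxu^*=x$ for every $x$ commuting with $M$), so a partition of unity $\{p_g\}_{g\in F}$ with $\tilde{\beta}_h(p_g)\approx p_{hg}$ would force the mutually orthogonal projections $p_g$ and $p_{hg}$ to be close, hence all $p_g\approx 0$, contradicting $\sum_{g}p_g=1$. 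Your averaging scheme therefore breaks down precisely in the case the proposition is designed to cover.

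The missing idea is a reduction, not a strengthening of the Rohlin theorem. Set $\Gamma_0=\{g\in\Gamma\;|\;\tilde{\beta}_g\text{ is an inner automorphism of }M\}$. This is a normal subgroup, and by the observation above every $g\in\Gamma_0$ acts as the identity on $M_{\omega}$; hence $\tilde{\beta}|_{M_{\omega}}$ factors through an action $\gamma$ of the (still amenable) quotient $\Gamma/\Gamma_0$ with $M_{\omega}^{\tilde{\beta}}=M_{\omega}^{\gamma}$. For $g\notin\Gamma_0$ the automorphism $\tilde{\beta}_g$ is outer on the injective II$_1$ factor, hence centrally nontrivial by Connes, so $\gamma$ does satisfy the hypotheses of \cite[Lemma 4.1]{MS2} (equivalently, of Ocneanu's results), and that lemma gives that $M_{\omega}^{\gamma}$ is a II$_1$ factor. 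With this one extra step your argument matches the paper's proof; without it, the core of your sketch is invalid in the non-outer case.
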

\begin{proof}
Since $B$ is a simple separable non-type I nuclear monotracial C$^*$-algebra, 
$M=\pi_{\tau_{B}}(B)^{''}$ is isomorphic to the injective II$_1$ factor. 
Put 
$$
\Gamma_0=\{g\in\Gamma\; |\; \tilde{\beta}_g\text{ is an inner automorphism of } M \}.
$$ 
Then 
$\Gamma_0$ is a normal subgroup of $\Gamma$. Since we have 
$\tilde{\beta}_g=\mathrm{id}_{M_{\omega}}$ in $M_{\omega}$ for any $g\in \Gamma_0$, $\tilde{\beta}$ induces 
an action $\gamma$ of $\Gamma/\Gamma_0$ on $M_{\omega}$ such that $M_{\omega}^{\tilde{\beta}}
=M_{\omega}^{\gamma}$. Since $\Gamma/\Gamma_0$ is amenable, 
\cite[Lemma 4.1]{MS2} (see also \cite[Proposition 7.2]{Oc}) implies that 
$M_{\omega}^{\gamma}$ is a II$_1$ factor. Consequently, we obtain the conclusion. 
\end{proof}

In the rest of this section,  we shall consider certain semiliftable actions. 
Suppose that $A$ is a simple separable monotracial C$^*$-algebra and $B$ is 
a separable nuclear monotracial C$^*$-algebra. 
Let $\gamma$ be a strongly outer action of $\Gamma$ on $B$, 
and let $\{U_{g}\; |\; g\in\Gamma \}$ be a set of unitary elements in $(B^{\sim})^{\omega}$ 
such that  a map $\beta^{\omega}:g\mapsto \mathrm{Ad}(U_g)\circ \gamma_g$ 
defines an action of $\Gamma$ on $F(\Phi (A), B)$. 
Note that we have $\tilde{\beta}^{\omega}_{g}=\mathrm{Ad}(\varrho (U_g))\circ \tilde{\gamma}_g$ 
for any $g\in\Gamma$ where $\tilde{\beta}^{\omega}$ and $\tilde{\gamma}$ are 
the induced actions on $\mathcal{M}(\Phi (A), B)$ and on $\mathcal{M}$, respectively. 
Since $B$ is separable, nuclear and monotracial, $\pi_{\tau_{B}}(B)^{''}$ is isomorphic to the 
injective II$_1$ factor. (Note that if $B$ were of type I, then there exist no 
strongly outer actions on $B$.) 
The following proposition can be regarded as a corollary of Ocneanu's first cohomology vanishing result 
\cite[Proposition 7.2]{Oc} (see also \cite[Lemma 4.9]{Masuda}). 
This result is based on Ocneanu's Rohlin theorem \cite[Theorem 6.1]{Oc} 
(see also \cite[Theorem 4.8]{Masuda}) and a Shapiro type argument. 
See also \cite[Section 3.2]{MT} for these arguments. 

\begin{pro}\label{pro:cohomology-vanishing}
Let $A$ be a simple separable monotracial C$^*$-algebra and $B$ a separable nuclear monotracial 
C$^*$-algebra, and let $\Phi$ be a homomorphism from $A$ to $M(B)^{\omega}$ 
such that $\tau_{A}=\tau_{B, \omega}\circ \Phi$. Assume that $\tau_{B}$ is faithful. 
Let $\gamma$ be a strongly outer action of  a countable discrete amenable group $\Gamma$ on 
$B$, 
and let $\{U_{g}\; |\; g\in\Gamma \}$ be a set of unitary elements in $(B^{\sim})^{\omega}$ 
such that  a map $\beta^{\omega}:g\mapsto \mathrm{Ad}(U_g)\circ \gamma_g$ 
defines an action of $\Gamma$ on $F(\Phi (A), B)$. 
Then every $\tilde{\beta}^{\omega}$-cocycle $V$ on $\mathcal{M}(\Phi (A), B)$ is a coboundary. 
\end{pro}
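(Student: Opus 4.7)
The plan is to combine Ocneanu's Rohlin theorem for amenable group actions on the injective II$_1$ factor with a Shapiro-type averaging argument, following the road map indicated in the paragraph preceding the proposition. First I would identify the ambient factor. Since $\gamma$ is strongly outer, $B$ cannot be of type I, so $M=\pi_{\tau_B}(B)''$ is the hyperfinite (injective) II$_1$ factor and $\mathcal{M}=M^{\omega}$ is again a hyperfinite II$_1$ factor. Proposition \ref{pro:factor} says that $\mathcal{M}(\Phi(A),B)$ is a finite factor; being the relative commutant in a tracial ultrapower of the injective II$_1$ factor of the separable subalgebra $\varrho(\Phi(A))$, it is itself isomorphic to the injective II$_1$ factor, which puts us squarely in the framework where Ocneanu's results apply.

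Next I would verify that the amenable action $\tilde{\beta}^{\omega}$ on $\mathcal{M}(\Phi(A),B)$ is outer. Strong outerness of $\gamma$ gives outerness of $\tilde{\gamma}_g$ on $M$, and hence on $\mathcal{M}$ (using, e.g., that $M_{\omega}^{\tilde{\gamma}}$ is a II$_1$ factor, as in Proposition \ref{pro:MS-lemma-4.1}). Combined with the fact that $\{U_g\}$ defines a genuine action on $F(\Phi(A),B)$, so that $\mathrm{Ad}(\varrho(U_g))\circ\tilde{\gamma}_g$ is a bona fide automorphism of the subfactor, an inner implementation of $\tilde{\beta}^{\omega}_g$ on $\mathcal{M}(\Phi(A),B)$ would, after multiplication by $\varrho(U_g)^{*}$, produce an element of $\mathcal{M}$ implementing $\tilde{\gamma}_g$ on $\mathcal{M}(\Phi(A),B)$. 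Together with the factoriality furnished by Proposition \ref{pro:factor} this is enough to rule out innerness for every $g\neq\iota$.

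With outerness in hand, I would invoke Ocneanu's Rohlin theorem \cite[Theorem 6.1]{Oc} (see also \cite[Theorem 4.8]{Masuda}) to produce Rohlin towers for $\tilde{\beta}^{\omega}$ inside $\mathcal{M}(\Phi(A),B)$, and then carry out the standard Shapiro-type averaging of the cocycle $V$ against these towers (cf.\ \cite[Proposition 7.2]{Oc} and \cite[Lemma 4.9]{Masuda}) to obtain a unitary $W\in\mathcal{M}(\Phi(A),B)$ satisfying $V_g=W\,\tilde{\beta}^{\omega}_g(W^{*})$ for every $g\in\Gamma$.

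The main obstacle is the outerness verification on the subfactor after the inner perturbation. Strong outerness of $\gamma$ is a condition on $M$, and it must be transported first to $\mathcal{M}$ and then descended to the relative commutant $\mathcal{M}\cap\varrho(\Phi(A))'$ in the presence of the perturbation $\mathrm{Ad}(\varrho(U_g))$, where in principle an inner implementer on the subfactor need not extend to one on $\mathcal{M}$. Once this step is settled (using factoriality of $\mathcal{M}(\Phi(A),B)^{\tilde{\beta}^{\omega}}$-type statements and the compatibility of $\{U_g\}$ with $\Phi(A)$), the remainder of the argument is the Ocneanu/Masuda cohomology-vanishing machinery applied essentially verbatim.
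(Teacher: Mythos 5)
Your overall architecture (Ocneanu Rohlin towers plus a Shapiro-type averaging) is the right one, but the place where you propose to produce the towers is where the argument breaks down, and it is precisely the step you flag as ``the main obstacle.'' You want to verify that $\tilde{\beta}^{\omega}$ is outer on $\mathcal{M}(\Phi(A),B)$ and then invoke \cite[Theorem 6.1]{Oc} for that action on that algebra. There are two problems. First, $\mathcal{M}(\Phi(A),B)$ is a relative commutant inside the tracial ultrapower $\mathcal{M}$ of $M$; it is a finite factor by Proposition \ref{pro:factor}, but it is not separable and in particular is not isomorphic to the injective II$_1$ factor, so the claim that you are ``squarely in the framework where Ocneanu's results apply'' is not justified. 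Second, Ocneanu's Rohlin theorem does not follow from mere outerness of the action on the ambient factor: its hypothesis is a central freeness condition (the induced action on the asymptotic centralizer must be suitably free), and verifying that for an action on an ultrapower-type algebra is a different and harder problem than the outerness check you describe.

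The paper sidesteps both issues. It applies Connes' result \cite[Theorem 3.2]{C3} to conclude that $\tilde{\gamma}_g$ acts non-trivially on $M_{\omega}=\mathcal{M}\cap M^{\prime}$ for $g\neq\iota$ (here $M$ is separable, so this is the setting Ocneanu actually treats, via \cite[Lemma 5.6]{Oc}), obtains Rohlin projections $\{P_g\}$ for $\tilde{\gamma}$ in $M_{\omega}$, and then uses the fast reindexation trick \cite[Lemma 5.3]{Oc} to arrange that the $P_g$ commute with the separable set $\varrho(\Phi(A))\cup\{U_g\}\cup\{V_g\}$. Because the $P_g$ commute with the $\varrho(U_g)$, they are automatically Rohlin projections for the perturbed action $\tilde{\beta}^{\omega}_g=\mathrm{Ad}(\varrho(U_g))\circ\tilde{\gamma}_g$ as well, and they lie in $\mathcal{M}(\Phi(A),B)\cap\{V_g\;|\;g\in\Gamma\}^{\prime}$, which is exactly what the Shapiro sum $W=\sum_{g}V_gP_g$ needs. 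No outerness of $\tilde{\beta}^{\omega}$ on the relative commutant is ever required. To repair your version you would have to replace the direct application of the Rohlin theorem to $\tilde{\beta}^{\omega}$ by this ``produce towers upstairs for $\tilde{\gamma}$ in $M_{\omega}$, then reindex them down'' step.
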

\begin{proof}
Since $\tilde{\gamma}$ is an outer action on the injective II$_1$ factor $M=\pi_{\tau_{B}}(B)^{''}$, 
$\tilde{\gamma}_g$ is a non-trivial automorphism of $M_{\omega}$ for any 
$g\in\Gamma\setminus\{\iota\}$ by \cite[Theorem 3.2]{C3}. 
Therefore \cite[Lemma 5.6]{Oc} implies that the action $\tilde{\gamma}$ on $M_{\omega}$ 
satisfies the assumption of Ocneanu's Rohlin theorem \cite[Theorem 6.1]{Oc} 
(see also \cite[Theorem 4.8]{Masuda} and \cite[Theorem 3.4]{MT}). 
Hence there exist ``Rohlin projections'' $\{P_g\}_{g\in G}$ in $M_{\omega}$ by 
\cite[Theorem 3.4]{MT} (which is a simplified version of \cite[Theorem 6.1]{Oc}), that is, 
$\{P_g\}_{g\in G}$ is a partition unity in $M_{\omega}$ consisting of projections such that 
$\tilde{\gamma}_g(P_h)-P_{gh}$ is small in a suitable sense. 
Taking suitable subsequences (or the fast reindexation trick \cite[Lemma 5.3]{Oc}), we may assume that 
$\{P_g\}_{g\in G}$ is contained in 
$$
M_{\omega}\cap\{V_g\; |\; g\in\Gamma\}^{\prime}\cap
\varrho (\Phi(A)\cup\{U_g\; |\; g\in\Gamma\})^{\prime} 
=\mathcal{M}(\Phi (A), B)\cap M^{\prime}\cap 
\{\varrho (U_g), V_g\; |\; g\in\Gamma\}^{\prime}.
$$ 
Note that $\{P_g\}_{g\in G}$ are also Rohlin projections for $\tilde{\beta}^{\omega}$ because we have 
$\{P_g\}_{g\in G}\subset \{\varrho (U_g)\; |\; g\in\Gamma\}^{\prime}$. 
We obtain the conclusion by a Shapiro type argument. 
For simplicity, we explain this argument in the case 
where $\Gamma$ is a finite group and $\tilde{\beta}^{\omega}_g (P_h)=P_{gh}$ for any $g,h\in \Gamma$. 
(See the proof of \cite[Proposition 7.2]{Oc} or \cite[Lemma 4.9]{Masuda} 
for the general case. We need to use the reindexing argument and consider careful estimates.) 
Put $W:=\sum_{g\in \Gamma}V_gP_g$. 
Then $W$ is a unitary element in $\mathcal{M}(\Phi (A), B)$ 
because $V_g$ is a unitary element in $\mathcal{M}(\Phi (A), B)$ and $P_g$ is a projection in 
$\mathcal{M}(\Phi (A), B)\cap \{V_g\; |\; g\in\Gamma\}^{\prime}$ for any $g\in \Gamma$.  
Since we have 
$
V_g\tilde{\beta}^{\omega}_{g}(W)= W
$
for any $g\in \Gamma$, $V$ is a coboundary. 
\end{proof}

The following proposition is based on the proof of \cite[Lemma 4.1]{MS2}.
Also, this proposition can be regarded as a corollary of Ocneanu's results. 

\begin{pro}\label{pro:relative-factor}
Let $A$ be a simple separable monotracial C$^*$-algebra and $B$ a separable nuclear monotracial 
C$^*$-algebra, and let $\Phi$ be a homomorphism from $A$ to $M(B)^{\omega}$ 
such that $\tau_{A}=\tau_{B, \omega}\circ \Phi$.  
Assume that $\tau_{B}$ is faithful. 
Let $\gamma$ be a strongly outer action of  a countable discrete amenable group $\Gamma$ on 
$B$, 
and let $\{U_{g}\; |\; g\in\Gamma \}$ be a set of unitary elements in $(B^{\sim})^{\omega}$ 
such that  a map $\beta^{\omega}:g\mapsto \mathrm{Ad}(U_g)\circ \gamma_g$ 
defines an action of $\Gamma$ on $F(\Phi (A), B)$. 
Then $\mathcal{M}(\Phi (A), B)^{\tilde{\beta}^{\omega}}$ is a II$_1$ factor. 
\end{pro}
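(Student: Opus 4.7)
The plan is to adapt the argument of Proposition \ref{pro:MS-lemma-4.1} to the relative, twisted setting, combining Ocneanu's Rohlin theorem with a Shapiro-type averaging argument in the spirit of the proof of Proposition \ref{pro:cohomology-vanishing}.

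By Proposition \ref{pro:factor} the algebra $\mathcal{M}(\Phi(A),B)$ is a finite factor. The existence of a strongly outer action of $\Gamma$ on $B$ forces $B$ to be non-type I, so $M = \pi_{\tau_B}(B)''$ is the injective II$_1$ factor and $\tilde{\gamma}$ is an outer action of $\Gamma$ on $M$. By Connes' theorem \cite[Theorem 3.2]{C3}, $\tilde{\gamma}_g$ acts non-trivially on $M_\omega$ for every $g \neq \iota$, so Ocneanu's Rohlin theorem applies. As in the proof of Proposition \ref{pro:cohomology-vanishing}, for any finite subset $F \subset \Gamma$ and any $\varepsilon > 0$ one obtains Rohlin projections $\{P_g\}_{g \in F}$ in $M_\omega$ satisfying $\sum_g P_g = 1$ and $\|\tilde{\gamma}_h(P_g) - P_{hg}\|_2 < \varepsilon$ for appropriate $h, g$. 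By the fast reindexation trick (Ocneanu \cite[Chapter 5]{Oc}) these may moreover be arranged to lie in $\mathcal{M}(\Phi(A),B) \cap M' \cap \{\varrho(U_g) : g \in \Gamma\}'$, so that $\tilde{\beta}^\omega_h(P_g) = \mathrm{Ad}(\varrho(U_h))\,\tilde{\gamma}_h(P_g)$ is close to $P_{hg}$ as well; hence $\{P_g\}$ is an approximate Rohlin system for the twisted action $\tilde{\beta}^\omega$ on $\mathcal{M}(\Phi(A),B)$.

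To establish the factor property it would suffice to show that any $z$ in the center of $\mathcal{M}(\Phi(A),B)^{\tilde{\beta}^\omega}$ commutes with all of $\mathcal{M}(\Phi(A),B)$, since the latter is a factor. Fixing such a $z$ and some $y \in \mathcal{M}(\Phi(A),B)$, a further reindexation arranges that the Rohlin projections above also commute with $z$ and $y$. Following the Shapiro trick as in the proof of Proposition \ref{pro:cohomology-vanishing}, I would consider the averaged element $\hat{y} := \frac{1}{|F|} \sum_{g \in F} \tilde{\beta}^\omega_g(P_\iota y P_\iota)$. Amenability of $\Gamma$ (expressed as a F\o lner condition on $F$) together with the Rohlin relations shows that $\hat{y}$ is approximately $\tilde{\beta}^\omega$-invariant, and a diagonal argument then produces a genuine fixed element $\hat{y}_\infty \in \mathcal{M}(\Phi(A),B)^{\tilde{\beta}^\omega}$. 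Expanding the commutator $[z, \hat{y}]$ using $z \in \mathcal{M}(\Phi(A),B)^{\tilde{\beta}^\omega}$ together with $[z, P_g] = 0$ and the $\tilde{\beta}^\omega$-invariance of $\tilde{\tau}_B$, one rewrites it as $\frac{1}{|F|}\sum_g \tilde{\beta}^\omega_g(P_\iota [z, y] P_\iota)$ up to Rohlin error, and passing to the diagonal limit yields $[z, y] = 0$.

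I expect the main obstacle to be the careful $\varepsilon$-bookkeeping in the Shapiro-Ocneanu averaging --- in particular, ensuring that the Rohlin and F\o lner errors combine so that the diagonal limit yields the exact commutation $[z, y] = 0$, rather than merely an approximate statement or a trace identity. As the author remarks, once the Rohlin projections are situated inside $\mathcal{M}(\Phi(A),B) \cap \{\varrho(U_g)\}'$, the proposition becomes essentially a corollary of Ocneanu's factoriality argument for fixed-point algebras of centrally free amenable actions on the injective II$_1$ factor.
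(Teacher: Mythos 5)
Your first paragraph reproduces the setup of the paper's Proposition \ref{pro:cohomology-vanishing} (Connes' theorem, Ocneanu's Rohlin towers, fast reindexation into $\mathcal{M}(\Phi (A), B)\cap M^{\prime}\cap\{\varrho (U_g)\; |\; g\in\Gamma\}^{\prime}$), and that part is fine. But your route to factoriality is not the paper's, and it has a genuine gap at the decisive step. The paper first embeds the injective II$_1$ factor unitally into $\mathcal{M}(\Phi (A), B)^{\tilde{\beta}^{\omega}}$ (using that $M_{\omega}^{\tilde{\gamma}}$ is a II$_1$ factor by Proposition \ref{pro:MS-lemma-4.1}, together with fast reindexation applied to the inclusion $M_{\omega}^{\tilde{\gamma}}\cap \varrho(\Phi(A)\cup\{U_g\})^{\prime}\subset \mathcal{M}(\Phi (A), B)^{\tilde{\beta}^{\omega}}$), and then runs the argument of \cite[Lemma 4.1]{MS2}: two projections of equal trace in the fixed-point algebra are equivalent in the ambient finite factor $\mathcal{M}(\Phi (A), B)$ (Proposition \ref{pro:factor}), the associated $\tilde{\beta}^{\omega}$-cocycle is a coboundary by Proposition \ref{pro:cohomology-vanishing}, hence they are already equivalent in the fixed-point algebra, and this is incompatible with a nontrivial central projection.

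The gap in your argument is the final implication. Writing $c=[z,y]$ and using $[z,P_g]=0$ and the invariance of $z$, one does get $[z,\hat{y}]=\frac{1}{|F|}\sum_{g\in F}\tilde{\beta}^{\omega}_g(P_{\iota}cP_{\iota})\approx \frac{1}{|F|}\sum_{g\in F}P_g\tilde{\beta}^{\omega}_g(c)P_g$. These summands are supported in pairwise orthogonal corners, so the vanishing of the sum (exactly or in $\|\cdot\|_2$) yields only $P_{\iota}cP_{\iota}\approx 0$, i.e.\ the compression of $[z,y]$ to a corner of trace $1/|F|$; all off-diagonal components $P_gcP_h$ with $g\neq h$ are destroyed by the averaging and are not controlled at all. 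Worse, with your normalization one has $\|\hat{y}\|_2\leq \|y\|/|F|\to 0$, so $[z,\hat{y}]\to 0$ automatically for every $y$ and every $z$, whether or not they commute; dropping the factor $1/|F|$ repairs the approximate invariance of $\hat{y}$ but not the loss of the off-diagonal part. Hence the diagonal limit cannot yield $[z,y]=0$, and the Shapiro averaging alone does not establish factoriality (it is the right tool for the cohomology vanishing and for showing the fixed-point algebra is large, which is how the paper uses it). The missing input is the comparison of equal-trace projections inside the fixed-point algebra via Proposition \ref{pro:factor} and Proposition \ref{pro:cohomology-vanishing}, as in \cite[Lemma 4.1]{MS2}.
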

\begin{proof}
By Proposition \ref{pro:MS-lemma-4.1}, 
$M_{\omega}^{\tilde{\gamma}}$ is a II$_1$ factor. 
Hence there exists a unital subalgebra $N$ of $M_{\omega}^{\tilde{\gamma}}$, which is 
isomorphic to the injective II$_1$ factor. Taking suitable subsequences of countably many generators 
of $N$ (or the fast reindexation trick \cite[Lemma 5.3]{Oc}), we see that 
$\mathcal{M}(\Phi (A), B)^{\tilde{\beta}^{\omega}}$ also contains 
the injective II$_1$ factor as a unital subalgebra since 
$M_{\omega}^{\tilde{\gamma}}\cap \varrho(\Phi(A)\cup\{U_g\; |\; g\in\Gamma\})^{\prime}\subset 
\mathcal{M}(\Phi (A), B)^{\tilde{\beta}^{\omega}}$ 
is a unital inclusion. 
The rest of the proof is same as the proof of \cite[Lemma 4.1]{MS2} by using 
Proposition \ref{pro:factor} and Proposition \ref{pro:cohomology-vanishing} instead of 
\cite[Theorem XIV.4.18]{Tak} and \cite[Proposition 7.2]{Oc}. 
\end{proof}

\begin{rem}\label{rem:induced}
In Proposition \ref{pro:cohomology-vanishing} and Proposition \ref{pro:relative-factor}, 
we need not assume that $\gamma$ and $\beta^{\omega}$ are genuine actions. 
It is enough to assume that $\gamma$ and $\beta^{\omega}$ induce an action 
$\tilde{\gamma}$ on $M_{\omega}$ which satisfies the assumption in \cite[Theorem 6.1]{Oc}
and an action $\tilde{\beta^{\omega}}$ on $\mathcal{M}(\Phi (A), B)$, respectively. 
\end{rem}

\section{Properties of $F(A\otimes \mathcal{W})^{\alpha\otimes\mathrm{id}_{\mathcal{W}}}$}
\label{sec:properties}

Let $A$ be a simple separable nuclear monotracial C$^*$-algebra, and let $\alpha$ be 
an outer action of a countable discrete amenable group $\Gamma$ on $A$. 
(Note that $A$ is not of type I since there exists an outer action on $A$.) 
In this section, we shall consider properties of 
$F(A\otimes\mathcal{W})^{\alpha\otimes\mathrm{id}_{\mathcal{W}}}$. 

The following proposition is an immediate corollary of 
Proposition \ref{pro:strict-comparison} and Proposition \ref{pro:MS-lemma-4.1}. 
\begin{pro}\label{pro:strict-comparison-w}
With notation as above, $F(A\otimes\mathcal{W})^{\alpha\otimes\mathrm{id}_{\mathcal{W}}}$ is 
monotracial. Furthermore, if $a$ and $b$ are positive elements in 
$F(A\otimes\mathcal{W})^{\alpha\otimes\mathrm{id}_{\mathcal{W}}}$ satisfying 
$d_{\tau_{A\otimes\mathcal{W}}, \omega}(a)<d_{\tau_{A\otimes\mathcal{W}}, \omega}(b)$, 
then there exists an element $r$ in $F(A\otimes\mathcal{W})^{\alpha\otimes\mathrm{id}_{\mathcal{W}}}$
such that $r^*br=a$. 
\end{pro}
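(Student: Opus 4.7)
The plan is to derive the proposition as a direct application of Proposition \ref{pro:strict-comparison}, with the roles of both $A$ and $B$ taken by $A\otimes\mathcal{W}$, the map $\Phi$ taken to be $\mathrm{id}_{A\otimes\mathcal{W}}$, and the semiliftable action taken to be $(\alpha\otimes\mathrm{id}_{\mathcal{W}})^{\omega}$, which is liftable by constant sequences. The claim then amounts to verifying the hypotheses of that proposition.

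First I would check the structural assumptions on $A\otimes\mathcal{W}$. It is simple, separable, nuclear and monotracial (with unique trace $\tau_{A}\otimes\tau_{\mathcal{W}}$, which is automatically faithful). It is non-type I since $\mathcal{W}$ is non-type I, and it has strict comparison since it is $\mathcal{Z}$-stable (because $\mathcal{W}$ is $\mathcal{Z}$-stable). The compatibility $\tau_{A\otimes\mathcal{W}}=\tau_{A\otimes\mathcal{W},\omega}\circ\mathrm{id}$ is trivial, and the unitality clause is vacuous since $A\otimes\mathcal{W}$ is non-unital. Outerness of the action on $\Phi(A\otimes\mathcal{W})=A\otimes\mathcal{W}$ reduces to outerness of $\alpha_g\otimes\mathrm{id}_{\mathcal{W}}$ on $A\otimes\mathcal{W}$ for each $g\neq\iota$; this follows from outerness of $\alpha_g$ on the simple C$^*$-algebra $A$ by the standard fact that $\alpha_g\otimes\mathrm{id}_{\mathcal{W}}$ inherits non-innerness from $\alpha_g$.

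The only nontrivial hypothesis left is that $\mathcal{M}(\Phi(A\otimes\mathcal{W}),A\otimes\mathcal{W})^{\tilde{\beta}^{\omega}}$ is a factor. Since $\Phi=\mathrm{id}$, this relative ultrapower commutant collapses to the standard one: writing $M=\pi_{\tau_{A\otimes\mathcal{W}}}(A\otimes\mathcal{W})''$, we have
$$
\mathcal{M}(\Phi(A\otimes\mathcal{W}),A\otimes\mathcal{W})=\mathcal{M}\cap\varrho(A\otimes\mathcal{W})'=\mathcal{M}\cap M'=M_{\omega},
$$
by Kaplansky's density theorem. Thus the fixed point algebra in question is $M_{\omega}^{\widetilde{\alpha\otimes\mathrm{id}_{\mathcal{W}}}}$, which is a II$_{1}$ factor by Proposition \ref{pro:MS-lemma-4.1} applied to the action $\alpha\otimes\mathrm{id}_{\mathcal{W}}$ of the countable discrete amenable group $\Gamma$ on the simple separable non-type I nuclear monotracial algebra $A\otimes\mathcal{W}$; note that Proposition \ref{pro:MS-lemma-4.1} requires no outerness hypothesis.

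Once all hypotheses are in place, Proposition \ref{pro:strict-comparison} immediately delivers both conclusions: $F(A\otimes\mathcal{W})^{\alpha\otimes\mathrm{id}_{\mathcal{W}}}$ is monotracial with strict comparison, and for positive $a,b$ in this fixed point algebra with $d_{\tau_{A\otimes\mathcal{W}},\omega}(a)<d_{\tau_{A\otimes\mathcal{W}},\omega}(b)$, one obtains $r$ in the fixed point algebra with $r^{*}br=a$. There is no genuine obstacle here — the main expository care is in correctly identifying $\mathcal{M}\cap\varrho(A\otimes\mathcal{W})'$ with $M_{\omega}$ so that Proposition \ref{pro:MS-lemma-4.1} may be invoked, and in noting that outerness of $\alpha$ on $A$ is inherited by $\alpha\otimes\mathrm{id}_{\mathcal{W}}$ on $A\otimes\mathcal{W}$.
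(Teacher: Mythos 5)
Your proposal is correct and is essentially the paper's own argument: the paper derives this proposition as an immediate corollary of Proposition \ref{pro:strict-comparison} and Proposition \ref{pro:MS-lemma-4.1}, exactly via the specialization $\Phi=\mathrm{id}_{A\otimes\mathcal{W}}$ and the identification $\mathcal{M}\cap\varrho(A\otimes\mathcal{W})'=M_{\omega}$ that you spell out. The details you supply (faithfulness of the trace, non-type I, strict comparison via $\mathcal{Z}$-stability, outerness of $\alpha_g\otimes\mathrm{id}_{\mathcal{W}}$, and the factoriality of $M_{\omega}^{\widetilde{\alpha\otimes\mathrm{id}_{\mathcal{W}}}}$) are precisely the hypotheses the paper leaves implicit.
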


Let $\{h_n\}_{n\in\mathbb{N}}$ be an approximate unit for $A$, and define a homomorphism 
$\Theta$ 
from $F(\mathcal{W})$ to $F(A\otimes\mathcal{W})^{\alpha\otimes\mathrm{id}_{\mathcal{W}}}$ by 
$\Theta ([(x_n)]):= [(h_n\otimes x_n)_n]$ for any $[(x_n)_n]\in F(\mathcal{W})$. 
Then $\Theta$ is a trace preserving unital homomorphism. 
Hence \cite[Proposition 4.2]{Na2} and the proposition above imply the following proposition. 

\begin{pro}\label{pro:equivariant-w}
(i) For any natural number $N$, there exists a unital homomorphism from $M_{N}(\mathbb{C})$ to 
$F(A\otimes\mathcal{W})^{\alpha\otimes\mathrm{id}_{\mathcal{W}}}$. \ \\
(ii) For any $\theta\in [0,1]$, there exists a non-zero projection $p$ in 
$F(A\otimes\mathcal{W})^{\alpha\otimes\mathrm{id}_{\mathcal{W}}}$ such that $\tau_{A\otimes\mathcal{W}, \omega}(p)=\theta$. \ \\
(iii) Let $h$ be a positive element in $F(A\otimes\mathcal{W})^{\alpha\otimes\mathrm{id}_{\mathcal{W}}}$ 
such that $d_{\tau_{A\otimes\mathcal{W}, \omega}}(h)>0$. For any 
$\theta \in [0, d_{\tau_{\omega}}(h))$, 
there exists a non-zero projection $p$ in 
$\overline{hF(A\otimes\mathcal{W})^{\alpha\otimes\mathrm{id}_{\mathcal{W}}}h}$ such that 
$\tau_{A\otimes\mathcal{W}, \omega}(p)=\theta$. 
\end{pro}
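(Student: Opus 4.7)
The plan is to exploit the trace-preserving unital homomorphism $\Theta\colon F(\mathcal{W})\to F(A\otimes\mathcal{W})^{\alpha\otimes\mathrm{id}_{\mathcal{W}}}$ constructed just above the statement, together with the strict comparison property supplied by Proposition \ref{pro:strict-comparison-w}. Recall that \cite[Proposition 4.2]{Na2} establishes the analogues of (i) and (ii) for $F(\mathcal{W})$ itself: it contains a unital copy of $M_N(\mathbb{C})$ for every $N$, and for every $\theta\in[0,1]$ it contains a projection of trace $\theta$.

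For (i), I would simply take a unital homomorphism $M_N(\mathbb{C})\to F(\mathcal{W})$ from \cite[Proposition 4.2]{Na2} and post-compose with $\Theta$; since $\Theta$ is unital, the composition is a unital embedding of $M_N(\mathbb{C})$ into the fixed point algebra.

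For (ii), I would pick a projection $q\in F(\mathcal{W})$ with $\tau_{\mathcal{W},\omega}(q)=\theta$ from the same reference and set $p:=\Theta(q)$. Because $\Theta$ is a (unital) $*$-homomorphism that preserves traces, $p$ is a projection in $F(A\otimes\mathcal{W})^{\alpha\otimes\mathrm{id}_{\mathcal{W}}}$ with $\tau_{A\otimes\mathcal{W},\omega}(p)=\theta$; for $\theta>0$ it is automatically non-zero (the $\theta=0$ case being trivial with $p=0$).

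The main point is (iii). I would first use (ii) to produce a projection $p_0\in F(A\otimes\mathcal{W})^{\alpha\otimes\mathrm{id}_{\mathcal{W}}}$ with $\tau_{A\otimes\mathcal{W},\omega}(p_0)=\theta$. Since $d_{\tau_{A\otimes\mathcal{W},\omega}}(p_0)=\theta<d_{\tau_{A\otimes\mathcal{W},\omega}}(h)$, Proposition \ref{pro:strict-comparison-w} delivers an element $r\in F(A\otimes\mathcal{W})^{\alpha\otimes\mathrm{id}_{\mathcal{W}}}$ with $r^{*}h r=p_0$. Setting $s:=h^{1/2}r$, we have $s^{*}s=p_0$, and from $(s-sp_0)^{*}(s-sp_0)=p_0-p_0^{2}-p_0^{2}+p_0^{3}=0$ it follows that $s=sp_0$, whence $ss^{*}$ is a projection Murray--von Neumann equivalent to $p_0$. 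This projection lies in $\overline{h^{1/2}F(A\otimes\mathcal{W})^{\alpha\otimes\mathrm{id}_{\mathcal{W}}}h^{1/2}}=\overline{hF(A\otimes\mathcal{W})^{\alpha\otimes\mathrm{id}_{\mathcal{W}}}h}$ and has trace $\theta$. The delicate point here, and the reason (iii) is the main obstacle, is that the comparison element $r$ must be chosen inside the \emph{fixed point} algebra rather than merely in $F(A\otimes\mathcal{W})$; this is exactly what the equivariant strict comparison assertion of Proposition \ref{pro:strict-comparison-w} guarantees.
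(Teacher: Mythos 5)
Your argument is essentially the paper's own proof: the paper derives all three parts from the trace-preserving unital homomorphism $\Theta$ together with \cite[Proposition 4.2]{Na2} and the equivariant strict comparison of Proposition \ref{pro:strict-comparison-w}, exactly as you do, and your verification of (iii) (that $s=h^{1/2}r$ implements a Murray--von Neumann equivalence of $p_0$ onto a projection of trace $\theta$ lying in $\overline{hF(A\otimes\mathcal{W})^{\alpha\otimes\mathrm{id}_{\mathcal{W}}}h}$) is correct. The one slip is your treatment of $\theta=0$: the statement asks for a \emph{non-zero} projection of trace $\theta$, so taking $p=0$ is not admissible. The fix is immediate: \cite[Proposition 4.2]{Na2} supplies a non-zero projection of trace $0$ in $F(\mathcal{W})$, and $\Theta$ is injective (since $\Theta([(x_n)_n])(a\otimes w)=(a\otimes x_nw)_n$, an element of $\ker\Theta$ must already lie in $\mathrm{Ann}(\mathcal{W},\mathcal{W}^{\omega})$), so its image is a non-zero trace-zero projection; feeding this $p_0$ into your comparison argument also settles the $\theta=0$ case of (iii).
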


Define a homomorphism $\Phi$ from $A$ to $M(A\otimes \mathcal{W})^{\omega}$ by 
$\Phi (a):= (a\otimes 1_{\mathcal{W}^{\sim}})_n$ for any $a\in A$. 
Then we have $\tau_{A\otimes\mathcal{W}} \circ \Phi = \tau_{A}$.

\begin{pro}\label{pro:strict-comparison-stable-uniqueness}
With notation as above, 
$F(\Phi (A), A\otimes\mathcal{W})^{\alpha\otimes\mathrm{id}_{\mathcal{W}}}$ is monotracial and 
has strict comparison. 
\end{pro}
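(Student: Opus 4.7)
The plan is to apply Proposition \ref{pro:strict-comparison} with $B := A\otimes\mathcal{W}$, the given $\Phi$, and $\beta^\omega := \alpha\otimes\mathrm{id}_{\mathcal{W}}$. The routine hypotheses are easy to check: $A$ is non-type I because it carries an outer $\Gamma$-action; $A\otimes\mathcal{W}\cong\mathcal{W}$ is monotracial with faithful trace and strict comparison; $\tau_A = \tau_{B,\omega}\circ\Phi$ is immediate; $\Phi$ is unital whenever $A$ is; $\beta^\omega$ is semiliftable via the constant lift $(\alpha_g\otimes\mathrm{id}_{\mathcal{W}})_n$; and $\beta^\omega_g|_{\Phi(A)}$ is conjugate to $\alpha_g$ on $A$, so is outer for $g\neq\iota$. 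The only serious task is to show that $\mathcal{M}(\Phi(A),B)^{\widetilde{\alpha\otimes\mathrm{id}}}$ is a factor; this is delicate because $\alpha\otimes\mathrm{id}_{\mathcal{W}}$ need not be strongly outer on $B$, so Proposition \ref{pro:relative-factor} does not apply verbatim.

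To address this, I set $R_A := \pi_{\tau_A}(A)''$ and let $\Gamma_0 := \{g\in\Gamma : \tilde\alpha_g\text{ is inner on }R_A\}$, a normal subgroup of $\Gamma$. For $g\in\Gamma_0$, writing $\tilde\alpha_g = \mathrm{Ad}(u_g)$ with a unitary $u_g\in R_A$ gives $\widetilde{\alpha\otimes\mathrm{id}}_g = \mathrm{Ad}(u_g\otimes 1)$ on $M = R_A\bar\otimes R_{\mathcal{W}}$ and on $\mathcal{M}$; since $u_g\otimes 1 \in R_A\otimes 1 = \varrho(\Phi(A))''$, this automorphism is the identity on $\mathcal{M}(\Phi(A),B) = \mathcal{M}\cap(R_A\otimes 1)'$. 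Hence the induced $\Gamma$-action on $\mathcal{M}(\Phi(A),B)$ factors through the countable discrete amenable quotient $\Gamma/\Gamma_0$. For $g\in\Gamma\setminus\Gamma_0$, $\tilde\alpha_g\otimes\mathrm{id}_{R_{\mathcal{W}}}$ is outer on the injective II$_1$ factor $M$, so by Connes' theorem \cite[Theorem 3.2]{C3} it is non-trivial on $M_\omega$, showing that the induced $\Gamma/\Gamma_0$-action on $M_\omega$ meets the centrally-free hypothesis of Ocneanu's Rohlin theorem.

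With these inputs in hand, I would rerun the proof of Proposition \ref{pro:relative-factor} with $\Gamma/\Gamma_0$ replacing $\Gamma$, invoking Remark \ref{rem:induced} to accommodate the fact that the quotient action on $\mathcal{M}(\Phi(A),B)$ is not a priori induced from a genuine $\Gamma/\Gamma_0$-action on $B$. This yields that $\mathcal{M}(\Phi(A),B)^{\widetilde{\alpha\otimes\mathrm{id}}} = \mathcal{M}(\Phi(A),B)^{\Gamma/\Gamma_0}$ is a II$_1$ factor. With the factor property verified, Proposition \ref{pro:strict-comparison} immediately gives that $F(\Phi(A),A\otimes\mathcal{W})^{\alpha\otimes\mathrm{id}_{\mathcal{W}}}$ is monotracial and has strict comparison.

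The main obstacle is confirming that the descent to $\Gamma/\Gamma_0$ slots cleanly into Proposition \ref{pro:relative-factor} and its underlying cocycle-vanishing input (Proposition \ref{pro:cohomology-vanishing}), both of which are originally phrased for strongly outer actions on $B$. Both should adapt because the key steps — Ocneanu's Rohlin theorem on $M_\omega$ and a Shapiro-type averaging of unitaries — depend only on the quotient action being centrally free over a countable discrete amenable group, which is precisely what $\Gamma/\Gamma_0$ provides. This careful bookkeeping is the one subtle part of the argument.
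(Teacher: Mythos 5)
Your proposal is correct and follows essentially the same route as the paper: reduce via Proposition \ref{pro:strict-comparison} to showing $\mathcal{M}(\Phi(A), A\otimes\mathcal{W})^{\widetilde{\alpha\otimes\mathrm{id}_{\mathcal{W}}}}$ is a factor, pass to the quotient by $\Gamma_0=\{g : \tilde\alpha_g \text{ inner on } \pi_{\tau_A}(A)''\}$ (on which the induced action of $\mathcal{M}(\Phi(A), A\otimes\mathcal{W})=\mathcal{M}\cap(\pi_{\tau_A}(A)''\bar\otimes\mathbb{C})'$ is trivial), and invoke Proposition \ref{pro:relative-factor} together with Remark \ref{rem:induced} for the amenable quotient $\Gamma/\Gamma_0$. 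Your additional verification that elements of $\Gamma\setminus\Gamma_0$ act non-trivially on $M_\omega$ via Connes' theorem is exactly the point Remark \ref{rem:induced} is designed to cover.
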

\begin{proof}
By Proposition \ref{pro:strict-comparison}, it is enough to show that 
$\mathcal{M}(\Phi(A), A\otimes\mathcal{W})^{\widetilde{\alpha\otimes\mathrm{id}_{\mathcal{W}}}}$ 
is a factor. 
Put $\Gamma_0:=\{g\in \Gamma\; |\; \tilde{\alpha}_g\text{ is an inner automorphism of } 
\pi_{\tau_A}(A)^{''}\}$, then $\Gamma_0$ is a normal subgroup of $\Gamma$. 
It can be easily checked that we have 
$$
\mathcal{M}(\Phi(A), A\otimes\mathcal{W})=
\mathcal{M}\cap (\pi_{\tau_A}(A)^{''}\bar{\otimes} \mathbb{C})^{\prime}
$$
where $\mathcal{M}$ is a von Neumann algebraic ultrapower of 
$\pi_{\tau_{A}}(A)^{''}\bar{\otimes}\pi_{\tau_{\mathcal{W}}}(\mathcal{W})^{''}$. 
Hence we see that if $g\in \Gamma_0$, then  
$\widetilde{\alpha_g\otimes\mathrm{id}_{\mathcal{W}}}
=\mathrm{id}_{\mathcal{M}(\Phi(A), A\otimes\mathcal{W})}$ in 
$\mathcal{M}(\Phi(A), A\otimes\mathcal{W})$.
Therefore $\widetilde{\alpha\otimes\mathrm{id}_{\mathcal{W}}}$ induces an action 
$\tilde{\beta}$ of $\Gamma/\Gamma_0$ on $\mathcal{M}(\Phi(A), A\otimes\mathcal{W})$, 
and we have 
$$
\mathcal{M}(\Phi(A), A\otimes\mathcal{W})^{\widetilde{\alpha\otimes\mathrm{id}_{\mathcal{W}}}}
=
\mathcal{M}(\Phi(A), A\otimes\mathcal{W})^{\tilde{\beta}}.
$$
Since  $\Gamma/\Gamma_0$ is amenable, 
Proposition \ref{pro:relative-factor} (see also Remark \ref{rem:induced})  implies that 
$\mathcal{M}(\Phi(A), A\otimes\mathcal{W})^{\tilde{\beta}}$ is a factor. Therefore we obtain the 
conclusion. 
\end{proof}

Using the proposition above instead of \cite[Proposition 3.8]{Na3}, 
we shall show Corollary \ref{cor:main-section4} which is the analogous result of \cite[Corollary 5.5]{Na3}. 
Note that we need not assume that $\Gamma$ is a finite group in arguments in \cite[Section 4 and 
Section 5]{Na3}. In the proof of \cite[Corollary 5.5]{Na3}, we need this assumption for 
\cite[Proposition 3.8]{Na3}. 
By \cite[Section 4]{Na3}, there exists a homomorphism $\rho$ from 
$F(A\otimes\mathcal{W})^{\alpha\otimes\mathrm{id}_{\mathcal{W}}}\otimes\mathcal{W}$ to 
$F(\Phi (A), A\otimes\mathcal{W})^{\alpha\otimes\mathrm{id}_{\mathcal{W}}}$ such that the following 
holds. 
If $(z_n)_n$ is an element in $(A\otimes\mathcal{W})^{\omega}\cap \Phi(A)^{\prime}$ such that 
$[(z_n)_n]=\rho ([(x_n)_n]\otimes b)$ for some 
$[(x_n)_n]\in F(A\otimes\mathcal{W})^{\alpha\otimes\mathrm{id}_{\mathcal{W}}}$ and 
$b\in \mathcal{W}$, then 
$$
(z_n(a\otimes 1_{\mathcal{W}^{\sim}}))_n= (x_n(a\otimes b))_n
$$
for any $a\in A$. For a projection $p$ in $F(A\otimes\mathcal{W})^{\alpha\otimes\mathrm{id}_{\mathcal{W}}}$, put 
$$
F(\Phi (A), A\otimes\mathcal{W})^{\alpha\otimes\mathrm{id}_{\mathcal{W}}}_p := 
\overline{\rho (p\otimes h)F(\Phi (A), A\otimes\mathcal{W})^{\alpha\otimes\mathrm{id}_{\mathcal{W}}}\rho (p\otimes h)}
$$
where $h$ is a strictly positive element in $\mathcal{W}$. 

For finite sets $G_1$ and $G_2$, let $G_1\odot G_2:=\{a\otimes b\; |\; a\in G_1, b\in G_2\}$. 
Using Proposition \ref{pro:strict-comparison-stable-uniqueness}, essentially the same arguments 
as in \cite[Section 4]{Na3} or \cite[Section 3]{Na2} show the following theorem. 
(See Section \ref{sec:sza} for  the precise definition of an approximately multiplicative map.) 
This theorem is a variant of  Elliott-Gong-Lin-Niu's stable uniqueness 
theorem \cite[Corollary 3.15]{EGLN}. 

\begin{thm} (cf. \cite[Corollary 4.5]{Na3}). \ \\
Let $\Omega$ be a compact metrizable space. For any finite subsets 
$F_1\subset C(\Omega)$, $F_2\subset \mathcal{W}$ and $\varepsilon>0$, 
there exist finite subsets $G_1\subset C(\Omega)$, $G_2\subset \mathcal{W}$, 
$m\in\mathbb{N}$  and $\delta >0$ such that the following holds. 
Let $p$ be a projection in $F(A\otimes\mathcal{W})^{\alpha\otimes\mathrm{id}_{\mathcal{W}}}$ such that 
$\tau_{A\otimes\mathcal{W}, \omega} (p)>0$. 
For any contractive ($G_1\odot G_2, \delta$)-multiplicative maps 
$\varphi, \psi : C(\Omega)\otimes \mathcal{W}\to F(\Phi (A), A\otimes\mathcal{W})^{\alpha\otimes\mathrm{id}_{\mathcal{W}}}_p$, 
there exist a unitary element $u$ in 
$M_{m^2+1}(F(\Phi (A), A\otimes\mathcal{W})^{\alpha\otimes\mathrm{id}_{\mathcal{W}}}_p)^{\sim}$ 
and $z_1,z_2,...,z_m\in\Omega$ such that 
\begin{align*}
\| u & (\varphi(f\otimes b) \oplus  \overbrace{\bigoplus_{k=1}^m f(z_k)\rho (p\otimes b)\oplus \cdots \oplus\bigoplus_{k=1}^m f(z_k)\rho (p\otimes b) }^m) u^* \\
& - \psi(f\otimes b)\oplus \overbrace{\bigoplus_{k=1}^m f(z_k)\rho (p\otimes b) \oplus \cdots \oplus \bigoplus_{k=1}^m f(z_k)\rho (p\otimes b)}^m\| < \varepsilon 
\end{align*}
for any $f\in F_1$ and $b\in F_2$. 
\end{thm}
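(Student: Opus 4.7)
The plan is to adapt verbatim the proof strategy of the analogous theorem \cite[Corollary 4.5]{Na3} (which treats finite groups), which in turn follows the argument in \cite[Section 3]{Na2}. The essential external input is the Elliott--Gong--Lin--Niu stable uniqueness theorem \cite[Corollary 3.15]{EGLN}, and the only place where the finite-group hypothesis was used in \cite{Na3} was to invoke \cite[Proposition 3.8]{Na3}; that role is now played by Proposition \ref{pro:strict-comparison-stable-uniqueness}, which gives monotraciality and strict comparison of $F(\Phi(A), A\otimes\mathcal{W})^{\alpha\otimes\mathrm{id}_{\mathcal{W}}}$ for any outer action of a countable discrete amenable group. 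Consequently, the proof proceeds with no new technical ingredients beyond what has been established in Section \ref{sec:fixed}.

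First I would argue by contradiction and the Kirchberg $\varepsilon$-test. Assume the theorem fails, so there exist $F_1, F_2, \varepsilon$, sequences of finite subsets $G_{1,n}, G_{2,n}$ exhausting $C(\Omega)$ and $\mathcal{W}$, tolerances $\delta_n \to 0$, integers $m_n\to\infty$, projections $p_n \in F(A\otimes\mathcal{W})^{\alpha\otimes\mathrm{id}_{\mathcal{W}}}$ with $\tau_{A\otimes\mathcal{W},\omega}(p_n) > 0$, and pairs of contractive $(G_{1,n}\odot G_{2,n}, \delta_n)$-multiplicative maps $\varphi_n, \psi_n$ into the corner $F(\Phi(A), A\otimes\mathcal{W})^{\alpha\otimes\mathrm{id}_{\mathcal{W}}}_{p_n}$ for which the conclusion fails. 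A reindexation argument turns these sequences into genuine $*$-homomorphisms $\varphi, \psi : C(\Omega) \otimes \mathcal{W} \to F(\Phi(A), A\otimes\mathcal{W})^{\alpha\otimes\mathrm{id}_{\mathcal{W}}}_p$ for some projection $p$ with positive trace (the reindexing argument produces the new projection as an element of the reindexed central sequence algebra, using the existence of projections of all traces from Proposition \ref{pro:equivariant-w}).

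Next I would apply \cite[Corollary 3.15]{EGLN}. The corner $F(\Phi(A), A\otimes\mathcal{W})^{\alpha\otimes\mathrm{id}_{\mathcal{W}}}_p$ inherits monotraciality and strict comparison from Proposition \ref{pro:strict-comparison-stable-uniqueness}, and it is unital (it comes with the unit $\rho(p\otimes h)$ via the homomorphism $\rho$ of Section \ref{sec:properties}). Because $\rho$ produces the genuine ``point-evaluation'' summands $f(z)\rho(p\otimes b)$ sitting inside the algebra, the hypotheses of the stable uniqueness theorem are met, and the conclusion produces the unitary $u$ in the matrix amplification and the points $z_1, \dots, z_m$. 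The contradiction with the assumption that the conclusion fails for the sequences $\varphi_n, \psi_n$ then gives the desired constants $G_1, G_2, m, \delta$.

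The main obstacle, as in \cite[Section 4]{Na3} and \cite[Section 3]{Na2}, is the verification that the reindexing argument produces homomorphisms into a corner of a sufficiently nice algebra --- concretely, that the limit target $F(\Phi(A), A\otimes\mathcal{W})^{\alpha\otimes\mathrm{id}_{\mathcal{W}}}_p$ satisfies the hypotheses of \cite[Corollary 3.15]{EGLN}, uniformly in $p$. This is exactly what Proposition \ref{pro:strict-comparison-stable-uniqueness} provides (strict comparison for the whole algebra passes to every hereditary subalgebra), so there is no obstruction beyond the careful bookkeeping that was already carried out in \cite[Section 3]{Na2} and \cite[Section 4]{Na3}. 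The argument therefore goes through with only notational changes.
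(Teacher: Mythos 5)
Your proposal matches the paper's own treatment: the paper gives no detailed argument here, stating only that ``using Proposition \ref{pro:strict-comparison-stable-uniqueness}, essentially the same arguments as in \cite[Section 4]{Na3} or \cite[Section 3]{Na2} show the following theorem,'' and you correctly identify that the sole role of the finite-group hypothesis in \cite{Na3} was to supply strict comparison and monotraciality of the relative fixed-point algebra, now furnished by Proposition \ref{pro:strict-comparison-stable-uniqueness}. One small caveat: $\rho(p\otimes h)$ with $h$ a strictly positive element of $\mathcal{W}$ is not a projection, so $F(\Phi(A), A\otimes\mathcal{W})^{\alpha\otimes\mathrm{id}_{\mathcal{W}}}_p$ is a hereditary subalgebra rather than a unital corner, but this does not affect the argument since the stable uniqueness machinery is applied exactly as in \cite{Na2} and \cite{Na3}.
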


Using Proposition \ref{pro:strict-comparison-w}, Proposition \ref{pro:equivariant-w} and the theorem 
above, essentially the same arguments as in \cite[Section 5]{Na3} show the following theorem. 

\begin{thm} (cf. \cite[Theorem 5.3]{Na3}). \ \\
Let $N_1$ and $N_2$ be normal elements in 
$F(A\otimes\mathcal{W})^{\alpha\otimes\mathrm{id}_{\mathcal{W}}}$ such that 
$\mathrm{Sp} (N_1)=\mathrm{Sp} (N_2)$ and 
$\tau_{A\otimes\mathcal{W}, \omega} (f(N_1)) >0$ for any $f\in C(\mathrm{Sp}(N_1))_{+}\setminus \{0\}$. 
Then there exists a unitary element $u$ in 
$F(A\otimes\mathcal{W})^{\alpha\otimes\mathrm{id}_{\mathcal{W}}}$ 
such that $uN_1u^* =N_2$ if and only if 
$
\tau_{A\otimes\mathcal{W}, \omega} (f(N_1))= \tau_{A\otimes\mathcal{W}, \omega} (f(N_2))
$ 
for any $f\in C(\mathrm{Sp}(N_1))$. 
\end{thm}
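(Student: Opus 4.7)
The forward direction is immediate: an inner automorphism of $F(A\otimes\mathcal{W})^{\alpha\otimes\mathrm{id}_{\mathcal{W}}}$ preserves its (unique) tracial state $\tau_{A\otimes\mathcal{W},\omega}$, which is a trace by Proposition \ref{pro:strict-comparison-w}, so $uN_1u^*=N_2$ forces $\tau_{A\otimes\mathcal{W},\omega}(f(N_2))=\tau_{A\otimes\mathcal{W},\omega}(f(N_1))$ for every $f\in C(\mathrm{Sp}(N_1))$.

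For the reverse direction, write $\Omega=\mathrm{Sp}(N_1)=\mathrm{Sp}(N_2)$ and let $\pi_i:C(\Omega)\to F(A\otimes\mathcal{W})^{\alpha\otimes\mathrm{id}_{\mathcal{W}}}$, $i=1,2$, be the unital $*$-homomorphisms $f\mapsto f(N_i)$ given by continuous functional calculus. By a standard reindexation/diagonal argument inside the ultrapower, it suffices to produce, for any finite $F\subset C(\Omega)$ and any $\varepsilon>0$, a unitary $u\in F(A\otimes\mathcal{W})^{\alpha\otimes\mathrm{id}_{\mathcal{W}}}$ with $\|u\pi_1(f)u^*-\pi_2(f)\|<\varepsilon$ for every $f\in F$; iterating and passing to the limit then gives an exact conjugating unitary. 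The plan is to carry out the proof of \cite[Theorem 5.3]{Na3} with the equivariant replacements now at our disposal: Proposition \ref{pro:strict-comparison-w} and Proposition \ref{pro:equivariant-w} in place of their non-equivariant counterparts, and the stable uniqueness theorem just proved in place of \cite[Corollary 4.5]{Na3}.

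Concretely, fix a projection $p\in F(A\otimes\mathcal{W})^{\alpha\otimes\mathrm{id}_{\mathcal{W}}}$ of trace close to $1$ supplied by Proposition \ref{pro:equivariant-w}(ii), and use the homomorphism $\rho$ from $F(A\otimes\mathcal{W})^{\alpha\otimes\mathrm{id}_{\mathcal{W}}}\otimes\mathcal{W}$ into $F(\Phi(A),A\otimes\mathcal{W})^{\alpha\otimes\mathrm{id}_{\mathcal{W}}}$ to promote the $\pi_i$ to approximately multiplicative maps $\varphi_i:C(\Omega)\otimes\mathcal{W}\to F(\Phi(A),A\otimes\mathcal{W})^{\alpha\otimes\mathrm{id}_{\mathcal{W}}}_p$. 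Applying the preceding stable uniqueness theorem to $\varphi_1,\varphi_2$ yields a unitary in a matrix amplification that intertwines them modulo a finite direct sum of point-evaluation corners $f(z_k)\rho(p\otimes b)$. The hypothesis $\tau_{A\otimes\mathcal{W},\omega}(f(N_1))>0$ for every nonzero $f\in C(\Omega)_+$ and the hypothesized trace equality let me apply Proposition \ref{pro:equivariant-w}(iii) to the hereditary subalgebras of $F(A\otimes\mathcal{W})^{\alpha\otimes\mathrm{id}_{\mathcal{W}}}$ generated by $f(N_i)$ to find, inside the image of each $\pi_i$, projections of the precise traces needed to match the point-evaluation defects; the equivariant strict comparison of Proposition \ref{pro:strict-comparison-w} then provides the Murray-von Neumann equivalences that absorb those defects back into the two original images.

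The main obstacle is the bookkeeping in this absorption step: one must simultaneously arrange defect projections, matching subprojections supported in the spectral hereditary subalgebras of $N_1$ and $N_2$, and the implementing partial isometries, all within the fixed-point algebra $F(A\otimes\mathcal{W})^{\alpha\otimes\mathrm{id}_{\mathcal{W}}}$ rather than in the ambient $F(A\otimes\mathcal{W})$, and then assemble a single unitary in the fixed-point algebra implementing the approximate equivalence for $F$. This is precisely where the equivariant projection calculus and strict comparison developed in Section \ref{sec:properties} are indispensable; once in place, the entire mechanism of \cite[Section 5]{Na3} transposes and delivers the approximate unitary equivalence, from which the desired exact conjugating unitary follows by the intertwining step.
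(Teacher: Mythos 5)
Your proposal is correct and follows essentially the same route as the paper, which proves this theorem by invoking Proposition \ref{pro:strict-comparison-w}, Proposition \ref{pro:equivariant-w} and the preceding stable uniqueness theorem and then transposing the arguments of \cite[Section 5]{Na3} verbatim into the fixed-point setting. Your outline of the mechanism (promoting the functional calculi to approximately multiplicative maps via $\rho$, applying stable uniqueness, absorbing the point-evaluation defects with the trace hypothesis and equivariant comparison, then passing from approximate to exact conjugacy by a diagonal/intertwining argument) is exactly the content of that transposition.
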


Using Proposition \ref{pro:strict-comparison-w}, Proposition \ref{pro:equivariant-w} and 
the theorem above, essentially the same proof as \cite[Corollary 5.5]{Na3} shows the following corollary. 

\begin{cor}\label{cor:main-section4}
Let $p$ and $q$ be projections in $F(A\otimes \mathcal{W})^{\alpha\otimes\mathrm{id}_{\mathcal{W}}}$ 
such that 
$0< \tau_{A\otimes\mathcal{W}, \omega} (p) \leq 1$.  
Then $p$ and $q$ are Murray-von Neumann equivalent if and only if 
$
\tau_{A\otimes\mathcal{W}, \omega} (p)= \tau_{A\otimes\mathcal{W}, \omega} (q)
$. 
\end{cor}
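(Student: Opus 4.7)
The ``only if'' direction follows immediately from the trace property: any partial isometry $v$ with $v^*v=p$ and $vv^*=q$ gives $\tau_{A\otimes\mathcal{W}, \omega}(p)=\tau_{A\otimes\mathcal{W}, \omega}(q)$. For the converse, setting $\theta:=\tau_{A\otimes\mathcal{W}, \omega}(p)=\tau_{A\otimes\mathcal{W}, \omega}(q)$, the plan is to reduce to the preceding normality theorem applied to $p$ and $q$ as self-adjoint normal elements of $F(A\otimes\mathcal{W})^{\alpha\otimes\mathrm{id}_{\mathcal{W}}}$, following \cite[Corollary 5.5]{Na3}.

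I would first handle the generic case $\theta\in(0,1)$. Since $F(A\otimes\mathcal{W})^{\alpha\otimes\mathrm{id}_{\mathcal{W}}}$ is monotracial with $\tau_{A\otimes\mathcal{W}, \omega}(1_F)=1$ by Proposition \ref{pro:strict-comparison-w}, the restriction $0<\theta<1$ forces $p,q\notin\{0,1_F\}$, and hence $\mathrm{Sp}(p)=\mathrm{Sp}(q)=\{0,1\}$. For any $f\in C(\{0,1\})$ one computes $\tau_{A\otimes\mathcal{W}, \omega}(f(p))=f(0)(1-\theta)+f(1)\theta=\tau_{A\otimes\mathcal{W}, \omega}(f(q))$, which is strictly positive whenever $f\ge 0$ and $f\ne 0$. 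Both hypotheses of the normality theorem are therefore met, yielding a unitary $u$ in $F(A\otimes\mathcal{W})^{\alpha\otimes\mathrm{id}_{\mathcal{W}}}$ with $upu^*=q$; then $v:=up$ is a partial isometry witnessing $p\sim q$.

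Next I would treat the boundary case $\theta=1$ by decomposition. Because $d_{\tau_{A\otimes\mathcal{W}, \omega}}(p)=1$, Proposition \ref{pro:equivariant-w}(iii) applied with $h=p$ produces a non-zero projection $p_0\in\overline{pF(A\otimes\mathcal{W})^{\alpha\otimes\mathrm{id}_{\mathcal{W}}}p}$ with $\tau_{A\otimes\mathcal{W}, \omega}(p_0)=1/2$; since $p$ is the unit of this hereditary subalgebra, $p_0\le p$ automatically, and $p-p_0$ is then a projection of trace $1/2$. Splitting $q=q_0+(q-q_0)$ analogously and applying the previous case to the pairs $(p_0,q_0)$ and $(p-p_0,q-q_0)$ produces partial isometries $v_1,v_2$ satisfying $v_i^*v_i\in\{p_0,p-p_0\}$ and $v_iv_i^*\in\{q_0,q-q_0\}$. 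Using $v_1=q_0v_1p_0$ and $v_2=(q-q_0)v_2(p-p_0)$ together with $p_0(p-p_0)=0$ and $q_0(q-q_0)=0$, all four cross-terms $v_1^*v_2$, $v_2^*v_1$, $v_1v_2^*$, $v_2v_1^*$ vanish, so $v:=v_1+v_2$ satisfies $v^*v=p$ and $vv^*=q$.

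The main delicate point is precisely the boundary case: if $\theta=1$ and $p\ne 1_F$, the hypothesis $\tau_{A\otimes\mathcal{W}, \omega}(f(p))>0$ for $0\ne f\in C(\mathrm{Sp}(p))_+$ in the normality theorem fails (take $f(0)>0$, $f(1)=0$), so one cannot appeal to it directly; the splitting step driven by Proposition \ref{pro:equivariant-w}(iii) is what reduces everything to the well-behaved range $\theta\in(0,1)$.
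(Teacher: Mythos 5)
Your proposal is correct and follows essentially the same route as the paper, which derives the corollary from Proposition \ref{pro:strict-comparison-w}, Proposition \ref{pro:equivariant-w} and the preceding normal-element uniqueness theorem exactly as in \cite[Corollary 5.5]{Na3}: view $p,q$ as normal elements with spectrum $\{0,1\}$ when $0<\theta<1$, and handle $\theta=1$ by splitting off subprojections of trace $1/2$ via Proposition \ref{pro:equivariant-w}(iii). Your identification of why the boundary case needs separate treatment is also the right observation.
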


\section{Szab\'o's approximate cocycle intertwining argument}\label{sec:sza}

In this section we shall consider a slight variant of Szab\'o's approximate cocycle intertwining argument in 
\cite{Sza7} (see also \cite{Ell2}). 
Throughout this section, we assume that $A$ and $B$ are non-unital C$^*$-algebras. 

Let $\alpha$ and $\beta$ be actions of  $\Gamma$ on $A$ and $B$, 
respectively, and let $\Gamma_0\subset \Gamma$, $F\subset A$, $G\subset B$ and $\varepsilon>0$. 
We say that $\Gamma_0$ is \textit{inverse-closed} if $\Gamma_0^{-1}=\Gamma_0$ and $\iota \in 
\Gamma_0$. 
A contractive completely positive (c.c.p.) 
map $\varphi :A\to B$ is said to be \textit{$(F, \varepsilon)$-multiplicative} if 
$$
\|\varphi (ab)-\varphi(a)\varphi(b)\|<\varepsilon
$$ 
for any $a, b\in F$. 
Since we assume that $A$ and $B$ are non-unital, every c.c.p. map $\varphi$ from $A$ to $B$ can be 
uniquely extended to a unital c.p. map $\varphi^{\sim}$ from $A^{\sim}$ to $B^{\sim}$ (see, for example, 
\cite[Proposition 2.2.1]{BO}). 
Hence if $\varphi$ is an $(F, \varepsilon)$-multiplicative map from $A$ to $B$, 
then $\varphi^{\sim}$ is a unital $(F^{\sim}, \varepsilon)$-multiplicative map 
from $A^{\sim}$ to $B^{\sim}$ where $F^{\sim}=\{a+\lambda 1_{A^{\sim}}\; |\; a \in F, 
\lambda\in\mathbb{C}\}$. 
We say that a map $u$ from $\Gamma$ to the unitary group of $B^{\sim}$ is a 
\textit{$(\Gamma_0, G, \varepsilon)$-approximate $\beta$-cocycle in  
the unitization algebra} if $u_\iota =1_{B^{\sim}}$ and 
$$
\| b(u_{gh}-u_{g}\beta_g(u_{h})) \|< \varepsilon
$$
for any $g,h\in \Gamma_0$ and $b\in G$.  
A \textit{proper $(\Gamma_0, F, \varepsilon)$-approximate cocycle morphism 
from $(A, \alpha)$ to $(B, \beta)$} is a pair $(\varphi, u)$, where $\varphi$ is 
an $(F, \varepsilon)$-multiplicative map from $A$ to $B$ and $u$ is 
a $(\Gamma_0, \varphi(F), \varepsilon)$-approximate $\beta$-cocycle in the unitization algebra such that 
$$
\| \varphi \circ \alpha_g (a) - \mathrm{Ad}(u_g)\circ \beta_g \circ \varphi (a) \| < \varepsilon
$$
for any $g\in \Gamma_0$ and $a\in F$.

\begin{rem}
In \cite{Sza7}, Szab\'o defined cocycle morphisms and proper cocycle morphisms for 
(twisted) group actions on C$^*$-algebras. Furthermore, he provided a categorical 
framework for the classification of group actions on C$^*$-algebras up to cocycle conjugacy. 
We do not consider a categorical framework. Our approach seems to be an ad hoc way for 
non-unital C$^*$-algebras $A$ with $A\subset \overline{\mathrm{GL}(A^{\sim})}$. 
\end{rem}

The following proposition is related to \cite[Proposition 3.4]{Sza7} and \cite[Proposition 2.3.2]{Ror1}. 

\begin{pro}\label{pro:intertwining}
Let $A$ and $B$ be separable non-unital C$^*$-algebras, and let $\alpha$ and $\beta$ be 
actions of a countable discrete group $\Gamma$ on $A$ and $B$, respectively. 
Assume that there exist increasing sequences of finite self-adjoint subsets $\{F_n\}_{n=1}^\infty$ of $A$, 
$\{G_n\}_{n=1}^{\infty}$ of $B$, an increasing sequence of inverse-closed finite subsets 
$\{\Gamma_n\}_{n=1}^{\infty}$  of $\Gamma$, a decreasing sequence $\{\varepsilon_n\}_{n=1}^{\infty}$ of 
positive numbers and sequences of proper approximate cocycle morphisms 
$\{(\varphi_n, u_n)\}_{n=1}^{\infty}$ from $(A, \alpha)$ to $(B, \beta)$ 
and $\{(\psi_n, v_n)\}_{n=1}^{\infty}$ from $(B,\beta)$ to $(A, \alpha)$ 
such that for any $n\in\mathbb{N}$, 
\begin{align*}
(\mathrm{i}) &\; (\varphi_n, u_n) \text{ is  a proper } (\Gamma_n, F_n, \varepsilon_n) 
\text{-approximate cocycle morphism}, \\
(\mathrm{ii}) &\; (\psi_n, v_n) \text{ is  a proper } (\Gamma_n, G_n, \varepsilon_n) 
\text{-approximate cocycle morphism}, \\
(\mathrm{iii}) &\; \| \psi_n \circ \varphi_n (a) -a  \| <\varepsilon_n, \quad \forall a\in F_n, \\
(\mathrm{iv}) &\; \| \varphi_{n+1} \circ \psi_n (b) -b \| < \varepsilon_n, \quad \forall b\in G_n, \\
(\mathrm{v}) &\;  \| \psi_n\circ \varphi_n(a)(\psi_n^{\sim}(u_g^{(n)})v_g^{(n)}-1)\| 
< \varepsilon_n, \quad \forall a\in F_n,\; g\in 
\Gamma_n, \\
(\mathrm{vi}) &\; \| \varphi_{n+1}\circ \psi_n(b)
(\varphi_{n+1}^{\sim}(v_g^{(n)})u_g^{(n+1)}-1)\| < \varepsilon_n, \quad 
\forall b\in G_{n},\; g\in \Gamma_n, \\ 
(\mathrm{vii}) &\; \varphi_n(F_n)\subset G_n, \quad \psi_{n}(G_n) \subset F_{n+1}, \\ 
(\mathrm{viii}) &\; \alpha_g (F_n)\subset F_{n+1}, 
\quad \beta_g(G_n)\subset G_{n+1}, \quad \forall g\in \Gamma_{n+1}, 
\\
(\mathrm{ix}) &\; F_{n}v_g^{(n)} \subset F_{n+1}, \quad G_{n}u_g^{(n+1)} \subset G_{n+1}, \quad  
\forall g\in \Gamma_{n+1}, \\
(\mathrm{x}) &\; u_g^{(n)}\in \{b+\lambda 1_{B^{\sim}}\; |\; b\in G_n, \lambda\in\mathbb{C} \}, \quad 
\forall g\in \Gamma_n, \\
(\mathrm{xi}) &\; v_g^{(n)}\in \{a+\lambda 1_{A^{\sim}}\; |\; a\in F_{n+1}, \lambda\in\mathbb{C} \}, \quad 
\forall g\in \Gamma_{n+1}, \\
(\mathrm{xii}) &\; \overline{\bigcup_{n=1}^\infty F_n}=A, \quad \overline{\bigcup_{n=1}^\infty G_n}=B, \quad 
\bigcup_{n=1}^\infty \Gamma_n=\Gamma, \\
(\mathrm{xiii}) &\; \sum_{n=1}^{\infty}\varepsilon_n < \infty. 
\end{align*}
Then $\alpha$ is cocycle conjugate to $\beta$. 
\end{pro}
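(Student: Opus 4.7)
The plan is to carry out a two-sided Elliott-type intertwining at the level of morphisms, and then a second intertwining at the level of the unitary cocycles, interpreted strictly in $M(B)$.

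\textbf{Step 1 (Construction of $\varphi$ and $\psi$).} For $a \in F_m$ and $n \ge m$, combining (iii), (iv), (vii) yields
\[
\|\varphi_{n+1}(a) - \varphi_n(a)\| \le \|\varphi_{n+1}(a - \psi_n\varphi_n(a))\| + \|\varphi_{n+1}\psi_n(\varphi_n(a)) - \varphi_n(a)\| < 2\varepsilon_n,
\]
so by (xiii) the sequence is Cauchy. Using (xii) and contractivity, extend to a c.c.p.\ map $\varphi\colon A \to B$; by (i) and (xiii) it is multiplicative, hence a $*$-homomorphism. Define $\psi\colon B \to A$ symmetrically. Relations (iii), (iv) then give $\psi \circ \varphi = \mathrm{id}_A$ and $\varphi \circ \psi = \mathrm{id}_B$, so $\varphi$ is an isomorphism with inverse $\psi$.

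\textbf{Step 2 (Strict convergence of the cocycles).} For each $g \in \Gamma$ I want to produce $u_g \in M(B)$ as the strict limit of $(u_g^{(n)})_n$. Fix $b \in G_n$; by monotonicity $b \in G_m$ for all $m \ge n$. Since $\varphi_{m+1}\psi_m(b)$ is within $\varepsilon_m$ of $b$ by (iv), inequality (vi) yields
\[
\|b\, u_g^{(m+1)} - b\, \varphi_{m+1}^{\sim}(v_g^{(m)})^{*}\| = O(\varepsilon_m).
\]
Conditions (x), (xi) confine $u_g^{(m)} - \lambda 1_{B^{\sim}}$ to $G_m$ and $v_g^{(m)} - \mu 1_{A^{\sim}}$ to $F_{m+1}$, so (iv) can be used to estimate $\varphi_{m+1}^{\sim}(\psi_m^{\sim}(u_g^{(m)})) \approx u_g^{(m)}$; combined with (v) applied to a suitable $a \in F_m$ with $\varphi_m(a)$ close to $b$, this gives $b\,\varphi_{m+1}^{\sim}(v_g^{(m)})^{*} \approx b\,u_g^{(m)}$ up to $O(\varepsilon_m)$. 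Summing via (xiii) shows $(b\,u_g^{(n)})_n$ is Cauchy; an adjoint/symmetric argument handles $(u_g^{(n)} b)_n$. By (xii) this produces a strict Cauchy sequence in $M(B)$ with limit $u_g$, and unitarity of each $u_g^{(n)}$ is preserved by strict continuity of multiplication and involution on the unit ball.

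\textbf{Step 3 (Cocycle identity and intertwining).} Passing $n \to \infty$ strictly in the $\beta$-cocycle approximation for $u^{(n)}$ gives $u_{gh} = u_g\beta_g(u_h)$ for all $g, h \in \Gamma$, so $u$ is a genuine $\beta$-cocycle. Passing to the limit in the approximate intertwining identity and using Step 1 gives $\varphi \circ \alpha_g = \mathrm{Ad}(u_g) \circ \beta_g \circ \varphi$ on $\bigcup_n F_n$, and hence on all of $A$ by density. This is cocycle conjugacy.

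The main obstacle is Step 2: the approximate equalities (v), (vi) only assert smallness after multiplication by specific test elements of the form $\psi_n\varphi_n(a)$ or $\varphi_{n+1}\psi_n(b)$, and one must convert this into honest strict Cauchyness of $(u_g^{(n)})_n$ in $M(B)$. The bookkeeping conditions (vii)--(xi) are precisely what enables the bootstrap: (x), (xi) keep $u_g^{(n)}$ and $v_g^{(n)}$ (modulo scalars) inside the finite subsets on which $\varphi_{n+1}\psi_n \approx \mathrm{id}$ and $\psi_n\varphi_n \approx \mathrm{id}$, while (vii)--(ix) ensure the sets grow fast enough to eventually absorb any given $b \in B$ and any $g \in \Gamma$.
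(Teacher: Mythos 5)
Your proposal is correct and follows essentially the same route as the paper: Elliott's approximate intertwining for the isomorphism, strict Cauchyness of $(u_g^{(n)})_n$ in $M(B)$ via the bookkeeping conditions (vii)--(xi), and passage to the limit for the cocycle identity and the intertwining relation. The only point to watch in Step 2 is that the adjoint of your estimate controls $u_g^{(n)}b^*$ rather than $b u_g^{(n)}$ directly, so the ``symmetric argument'' for left multiplication genuinely requires the approximate cocycle identity $u_g^{(k)*}\approx\beta_g(u_{g^{-1}}^{(k)})$ together with inverse-closedness of $\Gamma_n$ and self-adjointness of $G_n$ --- exactly as in the paper's proof.
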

\begin{proof}
By Elliott's approximate intertwining argument (see, for example, \cite[Proposition 2.3.2]{Ror1}), 
we see that there exist isomorphisms $\varphi: A\to B$ and 
$\psi: B\to A$ such that
$$
\varphi(a)=\lim_{n\to \infty}\varphi_n(a) \quad \text{and}\quad \psi(b)= \lim_{n\to \infty}\psi_n(b)
$$
for any $a\in A$ and $b\in B$. Note that we have 
$$
\|\varphi(a_1a_2)-\varphi(a_1)\varphi(a_2)\|=\lim_{n\to\infty} \| \varphi_n(a_1a_2)-\varphi_n(a_1)
\varphi_n(a_2)\| \leq \lim_{n\to \infty} \varepsilon_n=0
$$
for any $a_1, a_2\in\bigcup_{n=1}^\infty F_n$, and this shows that $\varphi$ is a homomorphism by 
(xii). 
We shall show that $\{u_g^{(n)}\}_{n\in\mathbb{N}}$ is a strict Cauchy sequence of unitaries in 
$B^{\sim}$. 
Fix $g_0\in \Gamma$. Then there exists a natural number $n_0$ such that $g_0\in \Gamma_{n_0}$ 
by (xii).  
Let $k_0 \geq n_0$. 
For any $b\in G_{k_0}$, $g\in \Gamma_{n_0}$ and $k> k_0$, we have  
\begin{align*}
\| b(u_g^{(k+1)}-u_{g}^{(k)}) \| 
& = \|b(1-u_{g}^{(k)}u_{g}^{(k+1)*})\| \\ 
& = \| b-\varphi_{k+1}\circ \psi_{k}(bu_g^{(k)})u_g^{(k+1)*} \\
& \quad +
 (\varphi_{k+1}\circ \psi_{k}(bu_g^{(k)})-bu_{g}^{(k)})u_g^{(k+1)*}\| \\
(\mathrm{iv}), (\mathrm{ix})\; 
& < \| b-\varphi_{k+1}\circ \psi_{k}(bu_g^{(k)})u_g^{(k+1)*}\| + \varepsilon_{k} \\
& = \| b- \varphi_{k+1}\circ\psi_{k}(bu_{g}^{(k)})\varphi_{k+1}^{\sim}(v_g^{(k)}) \\
& \quad +  \varphi_{k+1}\circ \psi_{k}(bu_{g}^{(k)})(\varphi_{k+1}^{\sim}(v_g^{(k)})-u_{g}^{(k+1)*}) \| 
+ \varepsilon_k
\\
(\mathrm{vi}), (\mathrm{ix}) \;  &< 
\| b- \varphi_{k+1}\circ \psi_{k}(bu_{g}^{(k)})\varphi_{k+1}^{\sim}(v_g^{(k)})\|+2\varepsilon_k \\
& = \| b- \varphi_{k+1}(\psi_{k}(bu_g^{(k)})v_g^{(k)}) \\
& \quad +\varphi_{k+1}(\psi_{k}(bu_g^{(k)})v_g^{(k)})
-\varphi_{k+1}(\psi_{k}(bu_g^{(k)}))\varphi_{k+1}^{\sim}(v_g^{(k)})\| +2\varepsilon_k \\
(\mathrm{i}), (\mathrm{vii}), (\mathrm{ix}), (\mathrm{xi}) \; 
& < \| b- \varphi_{k+1}(\psi_{k}(bu_g^{(k)})v_g^{(k)}) \| +2\varepsilon_k+ \varepsilon_{k+1} \\
& =\| b- \varphi_{k+1}(\psi_{k}(b)\psi_k^{\sim}(u_g^{(k)})v_g^{(k)}) \\
& \quad + \varphi_{k+1}((\psi_{k}(b)\psi_k^{\sim}(u_g^{(k)})-\psi_{k}(bu_g^{(k)}))v_g^{(k)})\|
+2\varepsilon_k+ \varepsilon_{k+1} \\
(\mathrm{ii}), (\mathrm{x}) \; 
& < \| b- \varphi_{k+1}(\psi_{k}(b)\psi_k^{\sim}(u_g^{(k)})v_g^{(k)})\| +3\varepsilon_k+ \varepsilon_{k+1} \\
& =\| b- \varphi_{k+1}(\psi_k(\varphi_k\circ \psi_{k-1}(b))\psi_k^{\sim}(u_g^{(k)})v_g^{(k)})       \\
& \quad +  \varphi_{k+1}(\psi_k(\varphi_k\circ \psi_{k-1}(b)-b)\psi_k^{\sim}(u_g^{(k)})v_g^{(k)})\| 
+3\varepsilon_k+ \varepsilon_{k+1} \\
(\mathrm{iv}) \;
& <\| b- \varphi_{k+1}(\psi_k(\varphi_k\circ \psi_{k-1}(b))\psi_k^{\sim}(u_g^{(k)})v_g^{(k)})\| \\
& \quad +\varepsilon_{k-1}+3\varepsilon_k+ \varepsilon_{k+1} \\
& = \| b- \varphi_{k+1}(\psi_{k}\circ \varphi_k(\psi_{k-1}(b))) \\
& \quad + \varphi_{k+1}(\psi_{k}\circ \varphi_k(\psi_{k-1}(b))(1-\psi_k^{\sim}(u_g^{(k)})v_g^{(k)}))\| \\
& \quad +\varepsilon_{k-1}+3\varepsilon_k+ \varepsilon_{k+1} \\
(\mathrm{v}),(\mathrm{vii}) \; 
& < \| b- \varphi_{k+1}(\psi_{k}\circ \varphi_k(\psi_{k-1}(b)))\|
+\varepsilon_{k-1} +4\varepsilon_k+ \varepsilon_{k+1} \\
(\mathrm{iv})  \; 
& <\| b- \varphi_{k+1}(\psi_{k}(b))\|
+2\varepsilon_{k-1}+4\varepsilon_k+ \varepsilon_{k+1} \\
(\mathrm{iv}) \; 
& < 2\varepsilon_{k-1}+5\varepsilon_k+ \varepsilon_{k+1}
\leq 8\varepsilon_{k-1}. 
\end{align*}
Hence we have 
$$
\| b(u_g^{(k+1)}-u_{g}^{(k)}) \| < 8\varepsilon_{k-1} \eqno{(5.2.1)}
$$
for any $b\in G_{k_0}$, $g\in\Gamma_{n_0}$ and $k>k_0$. 
Since $G_{k_0}$ is self-adjoint and $\Gamma_{n_0}$ is inverse-closed, for any $b\in G_{k_0}$, 
$g\in \Gamma_{n_0}$ and $k> k_0+1$, we have 
\begin{align*}
\| b^*(u_{g}^{(k)*}-\beta_g(u_{g^{-1}}^{(k)})) \| 
& = \| b^*(1 -\beta_g(u_{g^{-1}}^{(k)})u_{g}^{(k)})\| 
= \|\beta_{g^{-1}}(b^*)(1-u_{g^{-1}}^{(k)}\beta_{g^{-1}}(u_{g}^{(k)})) \| \\
&= \|\beta_{g^{-1}}(b^*)(u_{g^{-1}g}^{(k)}-u_{g^{-1}}^{(k)}\beta_{g^{-1}}(u_{g}^{(k)})) \| \\
(\mathrm{iv}), (\mathrm{viii}) \; 
&< \|\varphi_{k}\circ \psi_{k-1}(\beta_{g^{-1}}(b^*)) 
(u_{g^{-1}g}^{(k)}-u_{g^{-1}}^{(k)}\beta_{g^{-1}}(u_{g}^{(k)})) \| +2\varepsilon_{k-1}  \\
(\mathrm{i}), (\mathrm{vii}), (\mathrm{viii}) \; 
& < 2\varepsilon_{k-1}+\varepsilon_{k}.
\end{align*}
Therefore for any $b\in G_{k_0}$, $g\in \Gamma_0$ and  $k> k_0+1$,  we have 
\begin{align*}
\| (u_g^{(k+1)}-u_{g}^{(k)})b\| 
& = \|b^* (u_{g}^{(k+1)*}-u_{g}^{(k)*})\| \\
& < \|b^*(\beta_g(u_{g^{-1}}^{(k+1)})-\beta_g(u_{g^{-1}}^{(k)})) \| 
+2\varepsilon_{k}+\varepsilon_{k+1}+2\varepsilon_{k-1}+\varepsilon_{k} \\
& =\| \beta_{g^{-1}}(b^*)(u_{g^{-1}}^{(k+1)}-u_{g^{-1}}^{(k)}) \| +\varepsilon_{k+1}+3\varepsilon_{k}
+2\varepsilon_{k-1}
\\
(5.2.1), (\mathrm{viii}) \; 
& < \varepsilon_{k+1}+3\varepsilon_{k}+10\varepsilon_{k-1}\leq 14\varepsilon_{k-1}.
\end{align*}
Consequently, we have 
$$
\| b(u_{g_0}^{(k+1)}-u_{g_0}^{(k)}) \| + \| (u_{g_0}^{(k+1)}-u_{g_0}^{(k)})b\| < 22\varepsilon_{k-1}
$$
for any $b\in G_{k_0}$ and $k>k_0+1$. Hence we see that $\{u_{g_0}^{(n)}\}_{n\in\mathbb{N}}$ 
is a strict Cauchy sequence of unitaries in $B^{\sim}$ by (xii) and (xiii). (Note that 
$\{u_{g_0}^{(n)}\}_{n\in\mathbb{N}}$ is a norm bounded sequence.) 
Therefore for any $g\in \Gamma$, 
there exists a unitary element $u_g$ in $M(B)$ such that $\{u_g^{(n)}\}_{n\in\mathbb{N}}$ 
converges strictly to $u_g$ because $M(B)$ is strictly complete. 
Since $B$ is an essential ideal in $M(B)$ and we have 
$$
\|\varphi(a)(u_{gh}- u_{g}\beta_g(u_{h}))\|=
\lim_{n\to \infty}\|\varphi_n(a)(u_{gh}^{(n)}- u_{g}^{(n)}\beta_g(u_{h}^{(n)}))\|
\leq \lim_{n\to \infty} \varepsilon_n=0
$$
for any $a\in \bigcup_{n=1}^\infty F_n$ and $g,h\in \Gamma$, 
we see that $u$ is a $\beta$-cocycle. 
Also, we have 
\begin{align*}
\| \varphi \circ \alpha_g (a) - \mathrm{Ad}(u_g)\circ \beta_g \circ \varphi (a) \|
& =\|\varphi (\alpha_g(a))- u_g\beta_g(\varphi (a))u_g^* \| \\
&= \lim_{n\to\infty} \|\varphi_n (\alpha_g(a))- u_g^{(n)}\beta_g(\varphi_n (a))u_g^{(n)*} \| \\
&\leq \lim_{n\to\infty}\varepsilon_n =0 
\end{align*}
for any $a\in\bigcup_{n=1}^\infty F_n$ and $g\in \Gamma$. This shows that $\alpha$ is cocycle 
conjugate to $\beta$ by (xii). 
\end{proof}

In \cite{Sza7}, Szab\'o defined the notions of approximate unitary equivalence (or approximate 
unitary conjugacy) and approximate innerness for cocycle morphisms. 
We shall consider a similar notion of approximate innerness  by using ultrapowers. 
A pair $(\Psi, U)$ is said to be a \textit{sequential asymptotic cocycle morphism from 
$(A,\alpha)$ to $(B, \beta)$} if $\Psi$ is a 
homomorphism from $A$ to $B^{\omega}$ and $U$ is a map from $\Gamma$ to the unitary group of 
$(B^{\sim})^{\omega}$ such that
$$
\Psi \circ \alpha_g(a) = \mathrm{Ad}(U_g)\circ \beta_g\circ \Psi(a) \quad  
\text{and} 
\quad 
\Psi(a)U_{gh}=\Psi (a)U_g\beta_g(U_h)
$$
for any $a\in A$ and $g,h\in \Gamma$. 
In the case where $A$ and $B$ are monotracial, a sequential asymptotic cocycle morphism 
$(\Psi, U)$ from $(A, \alpha)$ to $(B, \beta)$ is said to be 
\textit{trace preserving} if $\tau_{A}=\tau_{B, \omega}\circ \Psi$. 
A \textit{sequential asymptotic cocycle endomorphism of $(A, \alpha)$} is 
a sequential asymptotic cocycle morphism from $(A, \alpha)$ to $(A, \alpha)$. 
We say that a sequential asymptotic cocycle endomorphism $(\Psi, U)$ of $(A, \alpha)$ is \textit{inner} 
if there exists a unitary element $W$ in $(A^{\sim})^{\omega}$ such that 
$$
\Psi (a)=WaW^* \quad \text{and} \quad \Psi(a)U_g = \Psi(a) W\alpha_g (W^*)
$$
for any $a\in A$ and $g\in\Gamma$.

\section{Uniqueness theorem}\label{sec:unique}

Let $\gamma$ be a strongly outer action of a countable discrete amenable group $\Gamma$ on 
$\mathcal{W}$.
Assume that $F(\mathcal{W})^{\gamma}$ satisfies the following properties: \ \\
\ \\
(i) for any $\theta\in [0,1]$, there exists a projection $p$ in 
$F(\mathcal{W})^{\gamma}$ such that $\tau_{\mathcal{W}, \omega}(p)=\theta$, \ \\
(ii) if $p$ and $q$ are projections in $F(\mathcal{W})^{\gamma}$ 
such that $0<\tau_{\mathcal{W}, \omega}(p)=\tau_{\mathcal{W}, \omega}(q)$, 
then  $p$ is Murray-von Neumann equivalent to $q$.
\ \\
\ \\
Note that the properties above are a part of properties in the main theorem (Theorem 
\ref{thm:main2}) in this paper. 
In this section we shall show that every trace preserving sequential asymptotic cocycle endomorphism 
of $(\mathcal{W}, \gamma)$ is inner. 

Let $(\Psi, U)$ be a trace preserving sequential asymptotic cocycle endomorphism of 
$(\mathcal{W}, \gamma)$. 
By the Choi-Effros lifting theorem, there exists a sequence $\{\psi_n\}_{n\in\mathbb{N}}$ 
of c.c.p. maps from $\mathcal{W}$ to $\mathcal{W}$ such that $\Psi (a)= (\psi_n(a))_n$ for any 
$a\in\mathcal{W}$.  Define a homomorphism from $\mathcal{W}$ to $M_2(\mathcal{W})^{\omega}$ by 
$$
\Phi (a):= \left(\left(\begin{array}{cc}
               a   &   0    \\ 
               0   &  \psi_n (a)    
 \end{array} \right)\right)_n
$$
for any $a\in \mathcal{W}$. It is easy to see that 
$\tau_{M_2(\mathcal{W}), \omega}\circ \Phi= \tau_{\mathcal{W}}$ 
since we have $\tau_{\mathcal{W}, \omega}\circ \Psi = \tau_{\mathcal{W}}$. 
For any $g\in \Gamma$, there exists a sequence $\{u_{g, n}\}_{n\in\mathbb{N}}$ of unitary elements 
in $\mathcal{W}^{\sim}$ such that $U_g=(u_{g,n})_n$ by definition. 
Note that we have 
$$
(\psi_n \circ \gamma_g (a))_n = (\mathrm{Ad}(u_{g,n})\circ \gamma_g \circ \psi_n(a))_n
$$
and
$$
(\psi_n(a) u_{gh, n})_n= (\psi_n(a) u_{g,n}\gamma_g(u_{h,n}))_n
$$
for any $a\in \mathcal{W}$ and $g,h\in\Gamma$. 
For any $g\in\Gamma$ and 
$n\in\mathbb{N}$, define an automorphism $\beta_{g, n}$ of $M_2(\mathcal{W})$ by 
$$
\beta_{g, n}:= \mathrm{Ad}\left(\left(\begin{array}{cc}
               1                                &         0    \\ 
               0                                &        u_{g,n}
 \end{array} \right)\right)\circ \gamma_{g}\otimes\mathrm{id}_{M_2(\mathbb{C})}.
$$
If $(x_n)_n\in \Phi (\mathcal{W})$, then 
$(\beta_{g,n}(x_n))_n\in \Phi (\mathcal{W})$ for any $g\in\Gamma$ 
because we have 
\begin{align*}
\left(\beta_{g,n} \left( \left(\begin{array}{cc}
                       a   &        0    \\ 
                       0   &   \psi_n (a)    
 \end{array} \right)\right)\right)_n
&=\left( \left(\begin{array}{cc}
               \gamma_g(a)   &   0    \\ 
                       0           &  \mathrm{Ad}(u_{g,n})\circ \gamma_g\circ \psi_n (a)    
\end{array} \right)\right)_n \\
&=  \left( \left(\begin{array}{cc}
               \gamma_g(a)   &   0    \\ 
                       0           &  \psi_n ( \gamma_g(a))   
\end{array} \right)\right)_n
\end{align*}
for any $a\in\mathcal{W}$.  
Hence for any $g\in\Gamma$, we can define an automorphism $\beta_g^{\omega}$ of 
$F(\Phi(\mathcal{W}), M_2(\mathcal{W}))$ by 
$\beta_g^{\omega}([(x_n)_n])= [(\beta_{g,n}(x_n))_n]$ for any 
$[(x_n)_n]\in F(\Phi(\mathcal{W}), M_2(\mathcal{W}))$. 
For any 
$\left(\left(\begin{array}{cc}
                       a_n   &        b_n    \\ 
                       c_n   &        d_n    
 \end{array} \right)\right)_n\in M_2(\mathcal{W})^{\omega}\cap \Phi (\mathcal{W})^{\prime}$ 
and $a\in\mathcal{W}$, we have 
\begin{align*}
&\left(\begin{array}{cc}
                       a   &        0    \\ 
                       0   &      \Psi (a)    
 \end{array} \right)
\left(\beta_{gh,n} \left( \left(\begin{array}{cc}
                       a_n   &        b_n    \\ 
                       c_n   &        d_n    
 \end{array} \right)\right)\right)_n \\
&= 
\left( \left(\begin{array}{cc}
                       a\gamma_{gh}(a_n)   &        a\gamma_{gh}(b_n)u_{gh, n}^*    \\ 
                       \psi_n(a)u_{gh, n}\gamma_{gh}(c_n)   &        \psi_n(a)u_{gh, h}\gamma_{gh}(d_n)u_{gh, n}^*    
 \end{array} \right)\right)_n \\
&= 
\left( \left(\begin{array}{cc}
                       a\gamma_{gh}(a_n)   &        a\gamma_{gh}(b_n)u_{gh, n}^*    \\ 
                       \psi_n(a)u_{g, n}\gamma_{g}(u_{h, n}\gamma_{h}(c_n))  &      
                       \psi_n(a)u_{g, n}\gamma_{g}(u_{h, n}\gamma_{h}(d_n))u_{gh, n}^*    
 \end{array} \right)\right)_n \\
&= \left(\begin{array}{cc}
                       a   &        0    \\ 
                       0   &   \Psi (a)    
 \end{array} \right)
\left( \left(\begin{array}{cc}
                       \gamma_{gh}(a_n)   &        \gamma_{gh}(b_n)u_{gh, n}^*    \\ 
                       u_{g, n}\gamma_{g}(u_{h, n}\gamma_{h}(c_n))  &      
                       u_{g, n}\gamma_{g}(u_{h, n}\gamma_{h}(d_n))u_{gh, n}^*    
 \end{array} \right)\right)_n \\
&=
\left( \left(\begin{array}{cc}
                       \gamma_{gh}(a_n)   &        \gamma_{gh}(b_n)u_{gh, n}^*    \\ 
                       u_{g, n}\gamma_{g}(u_{h, n}\gamma_{h}(c_n))  &      
                       u_{g, n}\gamma_{g}(u_{h, n}\gamma_{h}(d_n))u_{gh, n}^*    
\end{array} \right)\right)_n
\left(\begin{array}{cc}
                       a   &        0    \\ 
                       0   &      \Psi (a)    
\end{array} \right) \\
&= 
\left( \left(\begin{array}{cc}
                       \gamma_{gh}(a_n)a   &        \gamma_{gh}(b_n)u_{gh, n}^*\psi_n(a)    \\ 
                       u_{g, n}\gamma_{g}(u_{h, n}\gamma_{h}(c_n))a  &      
                       u_{g, n}\gamma_{g}(u_{h, n}\gamma_{h}(d_n))u_{gh, n}^*\psi_n(a)    
\end{array} \right)\right)_n \\
&= 
\left( \left(\begin{array}{cc}
                       \gamma_{gh}(a_n)a   &        \gamma_{g}(\gamma_h(b_n)u_{h, n}^*)u_{g, n}^*\psi_n(a)    \\ 
                       u_{g, n}\gamma_{g}(u_{h, n}\gamma_{h}(c_n))a  &      
                       u_{g, n}\gamma_{g}(u_{h, n}\gamma_{h}(d_n)u_{h, n}^*)u_{g, n}^*\psi_n(a)    
\end{array} \right)\right)_n \\
&= 
\left( \left(\begin{array}{cc}
                       \gamma_{g}(\gamma_{h}(a_n))   &        \gamma_{g}(\gamma_h(b_n)u_{h,n}^*)u_{g, n}^*    \\ 
                       u_{g, n}\gamma_{g}(u_{h, n}\gamma_{h}(c_n))  &      
                       u_{g, n}\gamma_{g}(u_{h, n}\gamma_{h}(d_n)u_{h, n}^*)u_{g, n}^*    
\end{array} \right)\right)_n
\left(\begin{array}{cc}
                       a   &        0    \\ 
                       0   &   \Psi (a)    
\end{array} \right) \\
&=
\left(\begin{array}{cc}
                       a   &        0    \\ 
                       0   &    \Psi (a)    
 \end{array} \right)
\left(\beta_{g,n}\circ \beta_{h,n} \left( \left(\begin{array}{cc}
                       a_n   &        b_n    \\ 
                       c_n   &        d_n    
 \end{array} \right)\right)\right)_n.
\end{align*}
Therefore we see that $\beta^{\omega}:g\mapsto \beta^{\omega}_g$ is an action of
$\Gamma$ on $F(\Phi(\mathcal{W}), M_2(\mathcal{W}))$. 
Furthermore, we obtain the following proposition by 
Proposition \ref{pro:strict-comparison} and Proposition \ref{pro:relative-factor}. 

\begin{pro}\label{pro:strict-comparison-2-by-2}
If $a$ and $b$ are positive elements in 
$F(\Phi(\mathcal{W}), M_2(\mathcal{W}))^{\beta^{\omega}}$ satisfying
$d_{\tau_{M_2(\mathcal{W}), \omega}}(a)< d_{\tau_{M_2(\mathcal{W}), \omega}}(b)$, 
then there exists an element $r$ in $F(\Phi(\mathcal{W}), M_2(\mathcal{W}))^{\beta^{\omega}}$ 
such that $r^*br=a$. 
\end{pro}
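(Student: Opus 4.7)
The plan is to deduce the result directly from Proposition \ref{pro:strict-comparison} applied with $A=\mathcal{W}$, $B=M_2(\mathcal{W})$, and with $\Phi$ and $\beta^{\omega}$ as constructed in this section; the conclusion of that proposition is precisely the desired comparison statement, so the task reduces to verifying its hypotheses.

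Most of the hypotheses are routine: $\mathcal{W}$ is simple, separable, non-type I, nuclear and monotracial; $M_2(\mathcal{W})$ inherits monotraciality, strict comparison, and faithfulness of trace from the $\mathcal{Z}$-stability of $\mathcal{W}$; the identity $\tau_{\mathcal{W}}=\tau_{M_2(\mathcal{W}),\omega}\circ\Phi$ was recorded just after the definition of $\Phi$; the unitality condition is vacuous since $\mathcal{W}$ is non-unital; and the family $\{\beta_{g,n}\}$ witnesses the semiliftability of $\beta^{\omega}$. For outerness of $\beta^{\omega}_g|_{\Phi(\mathcal{W})}$ when $g\ne\iota$, the matrix calculation preceding the proposition shows that under the isomorphism $a\mapsto\Phi(a)$ of $\mathcal{W}$ onto $\Phi(\mathcal{W})$ the automorphism $\beta^{\omega}_g|_{\Phi(\mathcal{W})}$ is conjugate to $\gamma_g$, which is outer since $\gamma$ is strongly outer.

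The main obstacle, and where the real work lies, is to verify that $\mathcal{M}(\Phi(\mathcal{W}),M_2(\mathcal{W}))^{\tilde{\beta}^{\omega}}$ is a factor. I would obtain this from Proposition \ref{pro:relative-factor} by taking $\gamma\otimes\mathrm{id}_{M_2(\mathbb{C})}$ as the strongly outer action on $M_2(\mathcal{W})$ and $U_g:=(\mathrm{diag}(1,u_{g,n}))_n\in (M_2(\mathcal{W})^{\sim})^{\omega}$ as the cocycle unitaries. The strong outerness of $\gamma\otimes\mathrm{id}_{M_2(\mathbb{C})}$ is short: any unitary implementing $\tilde{\gamma}_g\otimes\mathrm{id}_{M_2(\mathbb{C})}$ on $\pi_{\tau_{M_2(\mathcal{W})}}(M_2(\mathcal{W}))''\cong \pi_{\tau_{\mathcal{W}}}(\mathcal{W})''\bar{\otimes}M_2(\mathbb{C})$ would commute with $1\otimes M_2(\mathbb{C})$, hence lie in $\pi_{\tau_{\mathcal{W}}}(\mathcal{W})''\otimes 1$, and would yield an inner implementation of $\tilde{\gamma}_g$, contradicting the strong outerness of $\gamma$. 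The formula $\beta_{g,n}=\mathrm{Ad}(\mathrm{diag}(1,u_{g,n}))\circ(\gamma_g\otimes\mathrm{id}_{M_2(\mathbb{C})})$ then rewrites $\beta^{\omega}_g$ as $\mathrm{Ad}(U_g)\circ(\gamma_g\otimes\mathrm{id}_{M_2(\mathbb{C})})$, so Proposition \ref{pro:relative-factor} applies verbatim and produces the required factoriality. With this in hand, Proposition \ref{pro:strict-comparison} furnishes the desired element $r$ and completes the proof.
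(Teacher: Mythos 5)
Your proposal is correct and follows exactly the route the paper intends: the paper derives this proposition by combining Proposition \ref{pro:strict-comparison} with Proposition \ref{pro:relative-factor}, the latter applied to the strongly outer action $\gamma\otimes\mathrm{id}_{M_2(\mathbb{C})}$ perturbed by the unitaries $(\mathrm{diag}(1,u_{g,n}))_n$, precisely as you do. Your verification of the remaining hypotheses (trace compatibility, semiliftability, outerness of $\beta^{\omega}_g|_{\Phi(\mathcal{W})}$ via its identification with $\gamma_g$) fills in details the paper leaves implicit, and is accurate.
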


Let $\{h_n\}_{n\in\mathbb{N}}$ be an approximate unit for  $\mathcal{W}$. 
Put 
$$
P:= \left[\left(\left(\begin{array}{cc}
                       h_n   &        0    \\ 
                       0   &        0    
\end{array} \right)\right)_n\right]\in F(\Phi(\mathcal{W}), M_2(\mathcal{W})).
$$
It is easy to see that $P$ is a projection in $F(\Phi(\mathcal{W}), M_2(\mathcal{W}))^{\beta^{\omega}}$ 
since $\{h_n^2\}_{n\in\mathbb{N}}$ and $\{\gamma_g(h_n)\}_{n\in\mathbb{N}}$ are  
also approximate units for  $\mathcal{W}$.
Furthermore, we see that 
if $\{t_n\}_{n=1}^{\infty}$ is a sequence of positive contractions in $\mathcal{W}$ such that 
$(t_n  a)_n= a$ for any $a\in\mathcal{W}$, then 
$$
P= \left[\left(\left(\begin{array}{cc}
                       t_n   &        0    \\ 
                       0   &        0    
\end{array} \right)\right)_n\right].
$$
Define a homomorphism $\iota_{11}$ from $F(\mathcal{W})^{\gamma}$ to 
$F(\Phi(\mathcal{W}), M_2(\mathcal{W}))^{\beta^{\omega}}$ by 
$$
\iota_{11}([(a_n)_n]):=  \left[\left(\left(\begin{array}{cc}
                       a_n   &        0    \\ 
                       0   &        0    
\end{array} \right)\right)_n\right]
$$
for any $[(a_n)_n]\in F(\mathcal{W})^{\gamma}$. It is easy to see that $\iota_{11}$ is a well-defined 
isomorphism from 
$F(\mathcal{W})^{\gamma}$ onto $PF(\Phi(\mathcal{W}), M_2(\mathcal{W}))^{\beta^{\omega}}P$. 
Let $\{F_n\}_{n\in\mathbb{N}}$ be an increasing sequence of finite subsets of $\mathcal{W}$ such that 
$\overline{\bigcup_{n=1}^{\infty}F_n}=\mathcal{W}$.
Since we have 
$$
(\psi_n(a)\psi_n(h_k))_n= (\psi_n(ah_k))_n
$$
for any $k\in\mathbb{N}$ and $a\in \mathcal{W}$, there exists 
a sequence $\{X_k\}_{k\in\mathbb{N}}$ in $\omega$ such that $\bigcap_{k=1}^{\infty}X_k=\emptyset$
and 
$$
\|\psi_n(a)\psi_n(h_k)- \psi_n(ah_k)\| < \dfrac{1}{k}
$$
for any $a\in F_k$ and $n\in X_k$. Set 
$$
k(n) := \left\{\begin{array}{cl}
0 & \text{if } n\notin X_1   \\
h_k & \text{if } n\in X_k\setminus \bigcup_{i=k+1}^{\infty}X_{i}\quad (k\in\mathbb{N})
\end{array}
\right.,
$$
then we have 
$$
(\psi_n(a)\psi_n(h_{k(n)}))_n=(\psi_n(ah_{k(n)}))_n=(\psi_n(a))_n
$$
for any $a\in\mathcal{W}$. Put 
$$
Q:= \left[\left(\left(\begin{array}{cc}
                       0   &            0       \\ 
                       0   &     \psi_n(h_{k(n)})    
\end{array} \right)\right)_n\right]\in F(\Phi(\mathcal{W}), M_2(\mathcal{W})).
$$
Since we have 
\begin{align*}
\Psi(a)(\psi_n(h_{k(n)})^2)_n
&= (\psi_n(a)\psi_n(h_{k(n)})\psi_n(h_{k(n)}))_n= 
(\psi_n(a)\psi_n(h_{k(n)}))_n \\
&=\Psi(a)
(\psi_n(h_{k(n)}))_n
\end{align*}
and
\begin{align*}
\Psi(a)(u_{g,n}\gamma_g(\psi_n(h_{k(n)}))u_{g,n}^*)_n 
&=(\psi_n(a)u_{g,n}\gamma_g(\psi_n(h_{k(n)}))u_{g,n}^*)_n \\ 
&= (u_{g,n}\gamma_g(\psi_n(\gamma_g^{-1}(a)))\gamma_g(\psi_n(h_{k(n)}))u_{g,n}^*)_n \\
&= (u_{g,n}\gamma_g(\psi_n(\gamma_g^{-1}(a))\psi_n(h_{k(n)}))u_{g,n}^*)_n \\
&=(u_{g,n}\gamma_g(\psi_n(\gamma_g^{-1}(a)))u_{g,n}^*)_n \\
&=(\psi_n(a)u_{g,n}u_{g,n}^*)_n=(\psi_n(a))_n =\Psi(a)(\psi_n(h_{k(n)}))_n
\end{align*}
for any $a\in \mathcal{W}$ and $g\in\Gamma$, 
we see that $Q$ is a projection in $F(\Phi(\mathcal{W}), M_2(\mathcal{W}))^{\beta^{\omega}}$. 
Furthermore, we see that 
if $\{t_n\}_{n=1}^{\infty}$ is a sequence of positive contractions in $\mathcal{W}$ such that 
$(t_n  a)_n= a$ and 
$(\psi_n(a)\psi_n(t_n))_n=(\psi_n(at_n))_n$ for any $a\in\mathcal{W}$, then 
$$
Q= \left[\left(\left(\begin{array}{cc}
                       0   &            0       \\ 
                       0   &     \psi_n(t_n)    
\end{array} \right) \right)_n\right].
$$
Let $[(a_n)_n]$ be an element in $F(\mathcal{W})^{\gamma}$. 
If a representative of $[(a_n)_n]$ satisfies 
$$
(\psi_n(a)\psi_n(\gamma_g(a_{n})))_n= (\psi_n(a\gamma_g(a_{n})))_n, \quad 
(\psi_n(\gamma_g(a_{n}))\psi_n(a))_n= (\psi_n(\gamma_g(a_{n})a))_n
$$
and 
$$
(u_{g,n}\gamma_g(\psi_n(a_n))u_{g, n}^*)_n=(\psi_n(\gamma_g(a_{n})))_n
$$
for any $a\in\mathcal{W}$ and $g\in \Gamma$, then we can define an element 
$$
\left[\left(\left(\begin{array}{cc}
                       0   &            0       \\ 
                       0   &         \psi_n(a_{n})  
\end{array} \right)\right)_n\right]
\in F(\Phi(\mathcal{W}), M_2(\mathcal{W}))^{\beta^{\omega}}.
$$
Indeed, we have 
\begin{align*}
\Psi(a)(\psi_n(a_n))_n
&= (\psi_n(a)\psi_n(a_n))_n=(\psi_n(aa_n))_n=(\psi_n(a_na))_n
=(\psi_n(a_n)\psi_n(a))_n \\
&= (\psi_n(a_n))_n\Psi(a)
\end{align*}
and 
\begin{align*}
\Psi(a)(u_{g,n}\gamma_g(\psi_n(a_n))u_{g, n}^*)_n
&= \Psi(a)(\psi_n(\gamma_g(a_{n})))_n= (\psi_n(a)\psi_n(\gamma_g(a_n)))_n \\
&=(\psi_n(a\gamma_g(a_n)))_n=(\psi_n(aa_n))_n=(\psi_n(a)\psi_n(a_n))_n \\
&=\Psi(a)(\psi_n(a_n))_n
\end{align*}
for any $a\in\mathcal{W}$ and $g\in \Gamma$.
In general, reindexing $[(a_n)_n]$ as above, we obtain an element $[(a_{k(n)})_n]$ in 
$F(\mathcal{W})^{\gamma}$ such that 
$$
(\psi_n(a)\psi_n(\gamma_g(a_{k(n)})))_n= (\psi_n(a\gamma_g(a_{k(n)})))_n, 
$$
$$
(\psi_n(\gamma_g(a_{k(n)}))\psi_n(a))_n= (\psi_n(\gamma_g(a_{k(n)})a))_n
$$
and 
$$
(u_{g,n}\gamma_g(\psi_n(a_{k(n)}))u_{g, n}^*)_n=(\psi_n(\gamma_g(a_{k(n)})))_n
$$
for any $a\in\mathcal{W}$ and $g\in \Gamma$ 
because we have 
$$
(\psi_n(a)\psi_n(\gamma_g(a_k)))_n= (\psi_n(a\gamma_g(a_k)))_n, \quad 
(\psi_n(\gamma_g(a_k))\psi_n(a))_n= (\psi_n(\gamma_g(a_k)a))_n
$$
and
$$
(u_{g,n}\gamma_g(\psi_n(a_k))u_{g, n}^*)_n= (\psi_n(\gamma_{g}(a_{k})))_n
$$
for any $a\in\mathcal{W}$, $g\in\Gamma$ and $k\in\mathbb{N}$. 
Furthermore, it can easily be checked that 
for any separable unital C$^*$-subalgebra $D$ 
of $F(\mathcal{W})^{\gamma}$, there exists a unital homomorphism 
$\iota_{22, D}$ from $D$ to $QF(\Phi(\mathcal{W}), M_2(\mathcal{W}))^{\beta^{\omega}}Q$. 
The following lemma is related to \cite[Lemma 6.2]{Na3} and \cite[Lemma 4.2]{Na4}. 

\begin{lem}\label{lem:2-by-2}
With notation as above, $P$ is Murray-von Neumann equivalent to $Q$ in 
$F(\Phi(\mathcal{W}), M_2(\mathcal{W}))^{\beta^{\omega}}$.
\end{lem}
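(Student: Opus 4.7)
My plan is to run a Matui--Sato style argument in four stages: verify that $P$ and $Q$ have the same trace, pass to the II$_1$ factor quotient in which same-trace projections are automatically equivalent, lift the witnessing partial isometry back to $B:=F(\Phi(\mathcal{W}),M_2(\mathcal{W}))^{\beta^{\omega}}$, and then polish it into an exact partial isometry. The overall structure mirrors the analogous results \cite[Lemma 6.2]{Na3} and \cite[Lemma 4.2]{Na4}.

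First I would establish $\tau_{M_2(\mathcal{W}),\omega}(P)=\tau_{M_2(\mathcal{W}),\omega}(Q)=1/2$. The value for $P$ is immediate since $\{h_n\}$ is an approximate unit for $\mathcal{W}$. For $Q$, I would refine the sets $X_k$ in the definition of $k(n)$ by intersecting them with $\omega$-large sets encoding $|\tau_{\mathcal{W}}(\psi_n(h_k))-\tau_{\mathcal{W}}(h_k)|<1/k$; these sets exist because $\Psi$ is trace preserving, i.e.\ $\tau_{\mathcal{W},\omega}\circ\Psi=\tau_{\mathcal{W}}$. After this refinement (permissible via the diagonal argument), $\lim_{\omega}\tau_{\mathcal{W}}(\psi_n(h_{k(n)}))=\lim_{k\to\infty}\tau_{\mathcal{W}}(h_k)=1$, giving $\tau_{M_2(\mathcal{W}),\omega}(Q)=1/2$. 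As a side remark, the same asymptotic-unit computation shows $P+Q=1_{B}$.

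Next, by Proposition \ref{pro:relative-factor} the algebra $\mathcal{M}(\Phi(\mathcal{W}),M_2(\mathcal{W}))^{\tilde{\beta}^{\omega}}$ is a II$_1$ factor, and by Proposition \ref{pro:equiv-surjective} the map $[\varrho]$ restricted to $B$ surjects onto it. Since $[\varrho]$ is trace preserving, the images $\tilde{P}=[\varrho](P)$ and $\tilde{Q}=[\varrho](Q)$ are projections of common normalized trace $1/2$ in a II$_1$ factor, hence Murray--von Neumann equivalent via some partial isometry $\tilde{v}$. Lifting $\tilde{v}$ to $v\in B$ and replacing it with $QvP$, I obtain $v=QvP$ with $v^{*}v-P$ and $vv^{*}-Q$ lying in the ideal $\ker[\varrho]$, which consists of elements whose $\tau$-value on positive parts vanishes.

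The main obstacle is the final polishing step: correcting $v$ into a genuine partial isometry with $v^{*}v=P$ and $vv^{*}=Q$. The defects in $\ker[\varrho]$ have zero trace but may have positive dimension function, so naive functional calculus on $v^{*}v$ only yields an approximate projection. I would invoke property (SI) from Theorem \ref{thm:kir-si} --- which applies because $\beta^{\omega}|_{\Phi(\mathcal{W})}$ is outer, being built from the strongly outer $\gamma$ --- together with the strict comparison of Proposition \ref{pro:strict-comparison-2-by-2}. The idea is to use property (SI) to absorb the positive and negative parts of $v^{*}v-P$ and $vv^{*}-Q$ (trace zero) into contractions supported near $P$ and $Q$, and iterate, in the spirit of \cite[Lemma 6.2]{Na3} and \cite[Lemma 4.2]{Na4}, to extract an exact partial isometry in $B$ witnessing $P\sim Q$.
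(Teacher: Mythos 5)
Your steps (1)--(3) are fine: the trace computation for $Q$, the passage to the II$_1$ factor $\mathcal{M}(\Phi(\mathcal{W}),M_2(\mathcal{W}))^{\tilde{\beta}^{\omega}}$ via Proposition \ref{pro:relative-factor}, and the equivariant lift of a partial isometry via Proposition \ref{pro:equiv-surjective} all go through, and after replacing the lift by $QvP$ you do get a contraction $v$ with $P-v^*v\geq 0$ and $Q-vv^*\geq 0$ of trace zero (note these defects are automatically positive, so there are no ``positive and negative parts'' to separate). The genuine gap is your final polishing step. Property (SI) (Theorem \ref{thm:kir-si}) compares a trace-zero positive contraction $e$ against a \emph{large} positive contraction $f$ with $\inf_m\tau(f^m)>0$, producing $s$ with $s^*s=e$ and $fs=s$; it does not let you match the two defects $P-g(v^*v)$ and $Q-g(vv^*)$ against \emph{each other}. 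To build an exact partial isometry $w$ with $w^*w=P$ and $ww^*=Q$ by correcting $v$, you must find $t$ with $t^*t=P-g(v^*v)$, $tt^*=Q-g(vv^*)$ and the appropriate orthogonality, i.e.\ an exact Murray--von Neumann equivalence between two specific trace-zero positive elements of the trace-kernel ideal; such elements need not even have the same spectrum, and neither (SI) nor the exact comparison of Proposition \ref{pro:strict-comparison-2-by-2} (which requires a strict inequality of dimension functions) produces it. Iterating approximate corrections does not converge to an exact identity without some additional mechanism.

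The tell-tale symptom is that your argument never invokes the standing hypotheses (i) and (ii) on $F(\mathcal{W})^{\gamma}$ of Section \ref{sec:unique}. If (SI) plus strict comparison sufficed here, the same reasoning would show that any two same-trace projections in these fixed-point central sequence algebras are equivalent, which is precisely the nontrivial property (ii) that the paper must either assume or derive from the stable uniqueness machinery of Section \ref{sec:properties}; it is not a formal consequence of (SI). The paper's proof instead avoids lifting a partial isometry from the factor altogether: it uses hypothesis (i) to produce projections $q_{\varepsilon}\in F(\mathcal{W})^{\gamma}$ of trace $1-\varepsilon$, reindexes them into lower-corner projections $Q_{\varepsilon}$ with $\tau(Q_{\varepsilon})=(1-\varepsilon)/2<\tau(P)$, applies the \emph{exact} comparison $R_{\varepsilon}^*PR_{\varepsilon}=Q_{\varepsilon}$ of Proposition \ref{pro:strict-comparison-2-by-2}, diagonalizes over $\varepsilon$, and then uses hypothesis (ii) twice --- transported into the $2\times 2$ algebra through $\iota_{11}$ for the upper corner and through the reindexed elements $[(\mathrm{diag}(0,\psi_n(\cdot)))_n]$ for the lower corner --- to convert $R^*PR$ and $PRR^*P$ into exact equivalences with $Q$ and $P$. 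You need to route your final step through those hypotheses in the same way; as written, the last stage of your argument does not close.
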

\begin{proof}
For any $0<\varepsilon<1$, there exists a projection $q_{\varepsilon}$ 
in $F(\mathcal{W})^{\gamma}$ such that 
$\tau_{\mathcal{W}, \omega}(q_{\varepsilon})=1-\varepsilon$ by the assumption (ii) 
for $F(\mathcal{W})^{\gamma}$.
Reindexing a representative of $q_{\varepsilon}$, we obtain a projection $[(q_{\varepsilon, n})_n]$ in 
$F(\mathcal{W})^{\gamma}$ such that $\tau_{\mathcal{W}, \omega}([(q_{\varepsilon, n})_n])=1-\varepsilon$, 
$$
(\psi_n(a)\psi_n(\gamma_g(q_{\varepsilon, n})))_n= (\psi_n(a\gamma_g(q_{\varepsilon, n})))_n
$$
and 
$$
(u_{g,n}\gamma_g(\psi_n(q_{\varepsilon,n}))u_{g, n}^*)_n=(\psi_n(\gamma_g(q_{\varepsilon, n})))_n
$$
for any $a\in\mathcal{W}$ and $g\in \Gamma$.
Put 
$$
Q_{\varepsilon}:=\left[\left(\left(\begin{array}{cc}
                       0   &            0       \\ 
                       0   &         \psi_n(q_{\varepsilon, n})  
\end{array} \right)\right)_n\right]
\in F(\Phi(\mathcal{W}), M_2(\mathcal{W}))^{\beta^{\omega}},
$$
then $Q_\varepsilon$ is a projection such that 
$\tau_{M_2(\mathcal{W}), \omega}(Q_\varepsilon)=(1-\varepsilon)/2$. 
Proposition \ref{pro:strict-comparison-2-by-2} implies that there exists a contraction $R_{\varepsilon}$ in 
$F(\Phi(\mathcal{W}), M_2(\mathcal{W}))^{\beta^{\omega}}$ such that $R_{\varepsilon}^* P
R_{\varepsilon}= Q_{\varepsilon}$ since $\tau_{M_2(\mathcal{W}), \omega}(P)=1/2$. 
By the diagonal argument, there exist a projection $[(q^{\prime}_n)_n]$ in  $F(\mathcal{W})^{\gamma}$ 
and a contraction $R$ in $F(\Phi(\mathcal{W}), M_2(\mathcal{W}))^{\beta^{\omega}}$ such that 
$\tau_{\mathcal{W}, \omega}([(q_{n}^{\prime})_n])=1$, 
$$
(\psi_n(a)\psi_n(\gamma_g(q_{n}^{\prime})))_n= (\psi_n(a\gamma_g(q_{n}^{\prime})))_n, 
\quad 
(u_{g,n}\gamma_g(\psi_n(q_{n}^{\prime}))u_{g, n}^*)_n=(\psi_n(\gamma_g(q_{n}^{\prime})))_n
$$
for any $a\in\mathcal{W}$ and $g\in \Gamma$, and 
$$
R^* PR=\left[\left(\left(\begin{array}{cc}
                       0   &            0       \\ 
                       0   &         \psi_n(q_{n}^{\prime})  
\end{array} \right)\right)_n\right].
$$
By the assumption (i) for $F(\mathcal{W})^{\gamma}$, there exists an element 
$s=[(s_n)_n]$ in $F(\mathcal{W})^{\gamma}$ such that $s^*s=1$ and 
$ss^*= [(q_{n}^{\prime})_n]$. 
Reindexing representatives of $[(q_{n}^{\prime})_n]$, $[(s_n)_n]$, $R$, 
we may assume that 
$$
(\psi_n(a)\psi_n(\gamma_g(s_n)))_n= (\psi_n(a\gamma_g(s_n)))_n, 
\quad 
(\psi_n(\gamma_g(s_n))\psi_n(a))_n= (\psi_n(\gamma_g(s_n)a))_n, 
$$
and 
$$
(u_{g,n}\gamma_g(\psi_n(s_n))u_{g, n}^*)_n=(\psi_n(\gamma_g(s_n)))_n
$$
for any $a\in\mathcal{W}$ and $g\in \Gamma$, and we have 
$$
\left[\left(\left(\begin{array}{cc}
                       0   &            0       \\ 
                       0   &         \psi_n(s_{n})\psi_n(s_n^*)  
\end{array} \right)\right)_n\right]
=R^*PR, \quad 
\left[\left(\left(\begin{array}{cc}
                       0   &            0       \\ 
                       0   &         \psi_n(s_{n}^*)\psi_n(s_n)  
\end{array} \right)\right)_n\right]
=Q.
$$
Therefore $R^*PR$ is Murray-von Neumann equivalent to $Q$ in 
$F(\Phi(\mathcal{W}), M_2(\mathcal{W}))^{\beta^{\omega}}$. 
It is easy to see that there exists a projection $p$ in $F(\mathcal{W})^{\gamma}$ such that 
$\iota_{11}(p)=PRR^*P$ and $\tau_{\mathcal{W}, \omega}(p)=1$. 
By the assumption (i) for $F(\mathcal{W})^{\gamma}$, there exists an element in $r$ in 
$F(\mathcal{W})^{\gamma}$ such that $r^*r=1$ and $rr^*=p$. 
Since we have $\iota_{11}(r)^*\iota_{11}(r)=P$ and $\iota_{11}(r)\iota_{11}(r)^*=PRR^*P$, 
$P$ is Murray-von Neumann equivalent to 
$PRR^*P$ in $F(\Phi(\mathcal{W}), M_2(\mathcal{W}))^{\beta^{\omega}}$.
Consequently, $P$ is Murray-von Neumann equivalent to $Q$ in 
$F(\Phi(\mathcal{W}), M_2(\mathcal{W}))^{\beta^{\omega}}$.
\end{proof}

The following theorem is the Main theorem in this section. 

\begin{thm}
Let $\gamma$ be a strongly outer action of a countable discrete amenable group $\Gamma$ on 
$\mathcal{W}$.
Assume that $F(\mathcal{W})^{\gamma}$ satisfies the following properties: \ \\
(i) for any $\theta\in [0,1]$, there exists a projection $p$ in 
$F(\mathcal{W})^{\gamma}$ such that $\tau_{\mathcal{W}, \omega}(p)=\theta$, \ \\
(ii) if $p$ and $q$ are projections in $F(\mathcal{W})^{\gamma}$ 
such that $0<\tau_{\mathcal{W}, \omega}(p)=\tau_{\mathcal{W}, \omega}(q)$, 
then  $p$ is Murray-von Neumann equivalent to $q$.
\ \\
Then every trace preserving sequential asymptotic cocycle endomorphism $(\Psi, U)$ 
of $(\mathcal{W}, \gamma)$ is inner. 
\end{thm}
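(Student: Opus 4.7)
The plan is to apply Connes' classical $2\times 2$ matrix trick, with Lemma~\ref{lem:2-by-2} as the central input. By that lemma there is a partial isometry $V\in F(\Phi(\mathcal{W}),M_{2}(\mathcal{W}))^{\beta^{\omega}}$ with $V^{*}V=P$ and $VV^{*}=Q$. A first observation is that $P+Q=1_{F}$ in the unital algebra $F(\Phi(\mathcal{W}),M_{2}(\mathcal{W}))$: indeed $1_{F}-P-Q$ is represented by $(\mathrm{diag}(1-h_{n},\,1-\psi_{n}(h_{k(n)})))_{n}$, which lies in the annihilator $\mathrm{Ann}(\Phi(\mathcal{W}),M_{2}(\mathcal{W})^{\omega})$ by the defining properties of $\{h_{n}\}$ and $\{h_{k(n)}\}$ used in the construction of $P$ and $Q$ (possibly after strengthening the choice of $k(n)$ so that both left and right multiplications by $\psi_{n}(h_{k(n)})$ act as the identity on $\Psi(\mathcal{W})$ in the ultrapower). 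Hence $V+V^{*}$ is a self-adjoint unitary in $F(\Phi(\mathcal{W}),M_{2}(\mathcal{W}))^{\beta^{\omega}}$.

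Next I would extract the off-diagonal data from $V$. Choose a lift $\tilde V\in M_{2}(\mathcal{W})^{\omega}\cap\Phi(\mathcal{W})'$ of $V$. The relations $(VV^{*}-Q)\Phi(a)=0$ and $(V^{*}V-P)\Phi(a)=0$, together with the elementary fact that $X\geq 0$ with $Xy=0$ implies $X^{1/2}y=0$ and the centrality of the entries of $\tilde V$, force the $(1,1)$-, $(1,2)$- and $(2,2)$-entries to lie in the annihilator; subtracting them yields a representative $\tilde V=\begin{pmatrix}0 & 0\\ v & 0\end{pmatrix}$ with $v\in\mathcal{W}^{\omega}$. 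The remaining data translate into the pointwise relations
\[
v^{*}v\cdot a=a,\qquad vv^{*}\cdot\Psi(a)=\Psi(a),\qquad va=\Psi(a)v,\qquad u_{g}\gamma_{g}(v)\cdot a=v\cdot a
\]
for all $a\in\mathcal{W}$ and $g\in\Gamma$, corresponding respectively to $V^{*}V=P$, $VV^{*}=Q$, exact commutation with $\Phi(\mathcal{W})$, and $\beta^{\omega}$-invariance. To finish, I would promote $v$ to a genuine unitary $W\in(\mathcal{W}^{\sim})^{\omega}$ by lifting the self-adjoint unitary $V+V^{*}$ to a unitary $\mathbf U\in M_{2}(\mathcal{W}^{\sim})^{\omega}\cap\Phi(\mathcal{W})'$: starting from any self-adjoint lift $s$ of $V+V^{*}$, the identity $s^{2}\equiv 1$ modulo annihilator allows a diagonal functional-calculus cutoff (e.g.\ $s\,(|s|^{2}+\varepsilon_{n})^{-1/2}$ with $\varepsilon_{n}\to 0$ along $\omega$) to produce such a unitary lift. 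Exact commutation of $\mathbf U$ with $\Phi(\mathcal{W})$ together with $\mathbf U P\mathbf U^{*}\equiv Q$ modulo annihilator forces $\mathbf U\equiv\begin{pmatrix}0 & W^{*}\\ W & 0\end{pmatrix}$ for some unitary $W\in(\mathcal{W}^{\sim})^{\omega}$ whose $(2,1)$-block agrees with $v$ modulo annihilator. The four relations above then yield $Wa=\Psi(a)W$ (i.e.\ $\Psi(a)=WaW^{*}$) and $u_{g}\gamma_{g}(W)\cdot a=W\cdot a$, which rearranges to $\Psi(a)U_{g}=\Psi(a)W\gamma_{g}(W^{*})$, exhibiting $(\Psi,U)$ as inner.

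The main obstacle I expect is the unitary lifting in the final step. Because $\mathcal{W}$ is stably projectionless, no projection-based shortcut is available, and the functional-calculus adjustment must preserve both the annihilator relations and the $\beta^{\omega}$-equivariance. The strict comparison of $F(\Phi(\mathcal{W}),M_{2}(\mathcal{W}))^{\beta^{\omega}}$ supplied by Proposition~\ref{pro:strict-comparison-2-by-2}, combined with the unital structure of $F(\Phi(\mathcal{W}),M_{2}(\mathcal{W}))$, is the key technical resource that makes this passage possible; once $W$ is produced, the verification of the inner-implementation identities from the pointwise relations on $v$ is routine.
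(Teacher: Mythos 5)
Your proposal follows the paper's route up to a point: the paper likewise runs Connes' $2\times 2$ matrix trick, invokes Lemma \ref{lem:2-by-2} to produce the partial isometry $V$ with $V^*V=P$ and $VV^*=Q$, writes $V$ in the off-diagonal form with corner entry $(v_n)_n\in\mathcal{W}^{\omega}$, and extracts exactly your four pointwise relations $(v_n^*v_na)_n=a$, $(\psi_n(a)v_nv_n^*)_n=(\psi_n(a))_n$, $(v_na)_n=(\psi_n(a)v_n)_n$ and $(\psi_n(a)u_{g,n}\gamma_g(v_n))_n=(\psi_n(a)v_n)_n$. The divergence, and the gap, is in the final promotion of $v$ to a unitary. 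Your proposed cutoff $s(|s|^2+\varepsilon_n)^{-1/2}$ applied to a self-adjoint lift $s$ of $V+V^*$ produces a self-adjoint \emph{contraction} of norm strictly less than $1$, not a unitary: the function $t\mapsto t/\sqrt{t^2+\varepsilon}$ maps $\mathbb{R}$ into $(-1,1)$, and you only know $s^2\equiv 1$ modulo the annihilator (i.e.\ $(s^2-1)\Phi(a)=0$), not $\|s^2-1\|\to 0$, so no functional calculus of $s$ alone can output a unitary. More structurally, lifting the self-adjoint unitary $V+V^*=1-2P$ through the quotient by $\mathrm{Ann}(\Phi(\mathcal{W}),M_2(\mathcal{W})^{\omega})$ amounts to lifting the projection $P$, which is precisely the kind of step that is unavailable in this stably projectionless setting. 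The resource you cite as decisive here, Proposition \ref{pro:strict-comparison-2-by-2}, is not what carries this step; it has already been consumed inside Lemma \ref{lem:2-by-2} to produce $V$ in the first place.

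The paper's actual device is the stable rank one of $\mathcal{W}$ (i.e.\ $\mathcal{W}\subset\overline{\mathrm{GL}(\mathcal{W}^{\sim})}$): one perturbs each $v_n$ to an invertible element of $\mathcal{W}^{\sim}$ without changing the class of $(v_n)_n$ in the ultrapower, and then $w_n:=v_n(v_n^*v_n)^{-1/2}$ is a genuine unitary in $\mathcal{W}^{\sim}$ because $v_n$ is invertible. The relation $(v_n^*v_na)_n=a$ gives $(w_na)_n=(v_na)_n$, after which $\Psi(a)=WaW^*$ and $\Psi(a)U_g\gamma_g(W)W^*=\Psi(a)$ follow by the routine computation you describe. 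So your skeleton is right, but the one step you flag as the ``main obstacle'' is in fact broken as written, and the repair is to work with the single corner entry $v_n$ and stable rank one of $\mathcal{W}$ rather than with a unitary lift of $V+V^*$ in the $2\times 2$ algebra.
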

\begin{proof}
Let $\{h_n\}_{n\in\mathbb{N}}$ be an approximate unit for $\mathcal{W}$ satisfying 
$(\psi_n(a)\psi_n(h_n))_n= (\psi_n(ah_n))_n$ for any $a\in\mathcal{W}$. 
By Lemma \ref{lem:2-by-2}, there exists a partial isometry $V$ in  
$F(\Phi(\mathcal{W}), M_2(\mathcal{W}))^{\beta^{\omega}}$ such that 
$$
V^*V=\left[\left(\left(\begin{array}{cc}
                       h_n   &        0    \\ 
                       0   &        0    
\end{array} \right)\right)_n\right]
\quad \text{and} \quad 
VV^*=\left[\left(\left(\begin{array}{cc}
                       0   &        0    \\ 
                       0   &      \psi_n(h_n)    
\end{array} \right)\right)_n\right].
$$
It is easy to see that there exists an element $(v_n)_n \in \mathcal{W}^{\omega}$ such that 
$$
V=\left[\left(\left(\begin{array}{cc}
                       0    &      0    \\ 
                     v_n   &        0    
\end{array} \right)\right)_n\right]
$$
and we have 
$$
(v_n^*v_na)_n= (h_na)_n=a \quad \text{and} \quad 
(\psi_n(a) v_nv_n^{*})_n= (\psi_n(a)\psi_n(h_n))_n=(\psi_n(a))_n
$$
for any $a\in\mathcal{W}$. 
Since $V$ is an element in $F(\Phi(\mathcal{W}), M_2(\mathcal{W}))^{\beta^{\omega}}$, 
we have 
$$
(v_na)_n=(\psi_n(a)v_n)_n \quad \text{and} \quad (\psi_n(a)u_{g,n}\gamma_g(v_n))_n=
(\psi_n(a)v_n)_n
$$
for any $a\in\mathcal{W}$ and $g\in\Gamma$. These imply 
$$
\Psi(a)= (v_nav_n^*)_n \quad \text{and} \quad \Psi (a)U_g(\gamma_g(v_n)v_n^*)_n= \Psi(a)
$$
for any $a\in\mathcal{W}$ and $g\in\Gamma$. 
Since $\mathcal{W}$ has stable rank one, we may assume that $v_n$ is an invertible element in 
$\mathcal{W}^{\sim}$ for any $n\in\mathbb{N}$. For any $n\in\mathbb{N}$, put
$w_n:= v_n(v_n^*v_n)^{-1/2}$, and let $W:= (w_n)_n$. Then $W$ is a unitary element in 
$(\mathcal{W}^{\sim})^{\omega}$ and $(w_na)_n =(v_na)_n$ for any $a\in\mathcal{W}$. 
Furthermore, we have 
\begin{align*}
\Psi (a)=(v_nav_n^*)_n= (w_naw_n^*)=WaW^*
\end{align*}
and
\begin{align*}
\Psi(a)
&= \Psi (a)U_g(\gamma_g(v_n)v_n^*)_n = U_g\gamma_g(\Psi(\gamma_g^{-1}(a))(\gamma_g(v_n)v_n^*)_n \\
&=U_g(\gamma_g(\psi_n(\gamma_g^{-1}(a))v_n)v_n^*)_n=U_g(\gamma_g(v_n\gamma_g^{-1}(a))v_n^*)_n \\
&=U_g(\gamma_g(w_n\gamma_g^{-1}(a))v_n^*)_n=U_g(\gamma_g(w_n)av_n^*)_n
=U_g(\gamma_g(w_n)aw_n^*)_n \\
&= \Psi(a)U_g(\gamma_g(w_n)w_n^*)_n= \Psi(a) U_g\gamma_g(W)W^*
\end{align*}
for any $a\in\mathcal{W}$ and $g\in \Gamma$. Therefore $(\Psi, U)$ is inner. 
\end{proof}

We shall consider a generalization of proper approximate cocycle morphisms.
Let $\alpha$ and $\beta$ be actions of  $\Gamma$ on $A$ and $B$, 
respectively, and let $\Gamma_0\subset \Gamma$, $F\subset A$, $G\subset B$ and $\varepsilon>0$.  
A \textit{proper $(\Gamma_0, F, \varepsilon)$-approximate quasi cocycle morphism 
from $(A, \alpha)$ to $(B, \beta)$} is a pair $(\varphi, u)$, where $\varphi$ is 
an $(F, \varepsilon)$-multiplicative map from $A$ to $B$ and 
$u$ is a map from $\Gamma$ to contractions in  $B^{\sim}$
such that 
$$
\| \varphi (a)(u_{gh}-u_{g}\beta_g(u_{h})) \|< \varepsilon, \quad
\|u_g^*u_g-1\|< \varepsilon, \quad \|u_gu_g^* -1 \|< \varepsilon
$$
and 
$$
\| \varphi \circ \alpha_g (a) - \mathrm{Ad}(u_g)\circ \beta_g \circ \varphi (a) \| < \varepsilon
$$
for any $g\in \Gamma_0$ and $a\in F$. 
Assume that $\{\Gamma_n\}_{n=1}^{\infty}$ is an increasing sequence of subsets of $\Gamma$ 
with $\bigcup_{n=1}^{\infty}\Gamma_n=\Gamma$ and 
$\{F_n\}_{n=1}^{\infty}$ is an increasing sequence of subsets of $A$ with 
$\overline{\bigcup_{n=1}^{\infty}F_n}=A$. It is easy to see that if  
$(\varphi_n, u_g^{(n)})$ is a proper $(\Gamma_n, F_n, 1/n)$-approximate quasi cocycle morphism 
from $(A, \alpha)$ to $(B, \beta)$ for any $n\in\mathbb{N}$, then  a sequence 
$\{(\varphi_n, u_g^{(n)})\}_{n\in\mathbb{N}}$ induces a 
sequential asymptotic cocycle morphism from $(A,\alpha)$ to $(B, \beta)$. 
Hence we obtain the following corollary by the theorem above. 

\begin{cor}\label{cor:main-section6}
Let $\gamma$ be a strongly outer action of a countable discrete amenable group $\Gamma$ on 
$\mathcal{W}$.
Assume that $F(\mathcal{W})^{\gamma}$ satisfies the following properties: \ \\
(i) for any $\theta\in [0,1]$, there exists a projection $p$ in 
$F(\mathcal{W})^{\gamma}$ such that $\tau_{\mathcal{W}, \omega}(p)=\theta$, \ \\
(ii) if $p$ and $q$ are projections in $F(\mathcal{W})^{\gamma}$ 
such that $0<\tau_{\mathcal{W}, \omega}(p)=\tau_{\mathcal{W}, \omega}(q)$, 
then  $p$ is Murray-von Neumann equivalent to $q$.
\ \\
For any finite subsets $F\subset \mathcal{W}$, 
$\Gamma_0\subset \Gamma$ and $\varepsilon>0$, there exist finite subsets 
$F^{\prime} \subset \mathcal{W}$, $\Gamma_0^{\prime}\subset \Gamma$ and $\delta>0$ 
such that the following holds. If $(\varphi, u)$ is a proper 
$(\Gamma_0^{\prime}, F^{\prime}, \delta)$-approximate quasi cocycle morphism from 
$(\mathcal{W}, \gamma)$ to $(\mathcal{W}, \gamma)$ such that 
$$
| \tau_{\mathcal{W}}(\varphi (a)) -\tau_{\mathcal{W}}(a) | < \delta
$$
for any $a\in F^{\prime}$, then there exists a unitary element $w$ in $\mathcal{W}^{\sim}$ such that 
$$
\| \varphi(a) - waw^* \| < \varepsilon \quad \text{and} \quad 
\| \varphi (a) (u_g -w\gamma_g(w^*)) \| < \varepsilon
$$
for any $a\in F$ and $g\in \Gamma_0$.  
\end{cor}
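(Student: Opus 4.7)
The plan is to derive this corollary from the preceding theorem by a standard reindexation-contradiction argument, converting the qualitative ultrapower statement (``every trace preserving sequential asymptotic cocycle endomorphism is inner'') into its quantitative finite-stage shadow.

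Suppose the conclusion fails. Then I would extract finite subsets $F \subset \mathcal{W}$, $\Gamma_0 \subset \Gamma$, a constant $\varepsilon > 0$, increasing sequences $F_n \subset \mathcal{W}$ with $\overline{\bigcup_n F_n} = \mathcal{W}$ and $\Gamma_n \subset \Gamma$ with $\bigcup_n \Gamma_n = \Gamma$, and a sequence $\{(\varphi_n, u^{(n)})\}_n$ of proper $(\Gamma_n, F_n, 1/n)$-approximate quasi cocycle morphisms from $(\mathcal{W}, \gamma)$ to itself satisfying $|\tau_{\mathcal{W}}(\varphi_n(a)) - \tau_{\mathcal{W}}(a)| < 1/n$ for $a \in F_n$, but for which no unitary $w \in \mathcal{W}^{\sim}$ simultaneously satisfies $\|\varphi_n(a) - waw^*\| < \varepsilon$ and $\|\varphi_n(a)(u_g^{(n)} - w\gamma_g(w^*))\| < \varepsilon$ for all $a \in F$, $g \in \Gamma_0$.

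Next I would package this sequence, as in the observation immediately preceding the corollary, into a sequential asymptotic cocycle endomorphism $(\Psi, U)$ of $(\mathcal{W}, \gamma)$ via $\Psi(a) := (\varphi_n(a))_n$ and $U_g := (u_g^{(n)})_n$. The almost-trace condition forces $\tau_{\mathcal{W}, \omega} \circ \Psi = \tau_{\mathcal{W}}$, so $(\Psi, U)$ is trace preserving. The preceding theorem then produces a unitary $W = (w_n)_n \in (\mathcal{W}^{\sim})^{\omega}$ with $\Psi(a) = WaW^*$ and $\Psi(a) U_g = \Psi(a) W \gamma_g(W^*)$ for every $a \in \mathcal{W}$, $g \in \Gamma$. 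Evaluating these two identities on the finite sets $F$ and $\Gamma_0$ yields $\lim_{n \to \omega} \|\varphi_n(a) - w_n a w_n^*\| = 0$ and $\lim_{n \to \omega} \|\varphi_n(a)(u_g^{(n)} - w_n\gamma_g(w_n^*))\| = 0$ for every $a \in F$, $g \in \Gamma_0$; since these sets are finite there is a set $X \in \omega$ on which the unitary $w_n$ witnesses the sought conclusion at stage $n$, contradicting the choice of the sequence.

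The only nontrivial bookkeeping is verifying that the induced pair $(\Psi, U)$ genuinely fits the definition of a sequential asymptotic cocycle endomorphism: that $\Psi$ is a (bona fide) homomorphism, that each $U_g$ is honestly unitary in $(\mathcal{W}^{\sim})^{\omega}$, and that the identities $\Psi \circ \gamma_g = \mathrm{Ad}(U_g) \circ \gamma_g \circ \Psi$ and $\Psi(a)U_{gh} = \Psi(a)U_g \gamma_g(U_h)$ hold on the nose in the ultrapower. Each of these follows by passing to the $\omega$-limit in the corresponding approximate property of $(\varphi_n, u^{(n)})$, using the exhaustions $\overline{\bigcup_n F_n} = \mathcal{W}$ and $\bigcup_n \Gamma_n = \Gamma$ together with $\|u_g^{(n)*}u_g^{(n)} - 1\|, \|u_g^{(n)}u_g^{(n)*} - 1\| < 1/n$ whenever $g \in \Gamma_n$. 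I do not anticipate a genuine obstacle — the heart of the work is the preceding theorem, and this corollary is essentially its reindexation.
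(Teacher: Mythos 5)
Your proposal is correct and follows exactly the route the paper intends: the paper's (unwritten) proof is precisely the observation that a sequence of proper $(\Gamma_n,F_n,1/n)$-approximate quasi cocycle morphisms induces a trace preserving sequential asymptotic cocycle endomorphism, to which the preceding uniqueness theorem applies, and your contradiction/reindexation argument together with the standard lift of the unitary $W$ to a sequence of unitaries in $\mathcal{W}^{\sim}$ (valid since $\mathcal{W}\subset\overline{\mathrm{GL}(\mathcal{W}^{\sim})}$) fills in the details faithfully.
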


\section{Existence theorem}\label{sec:exist}

Let $A$ be a simple separable nuclear monotracial C$^*$-algebra, and let 
$\alpha$ be a strongly outer action of a countable discrete amenable group 
$\Gamma$ on $A$.  
Assume that $A\rtimes_{\alpha}\Gamma$ is $\mathcal{W}$-\textit{embeddable} 
(see \cite{LN}), that is, 
there exists an injective homomorphism from $A\rtimes_{\alpha}\Gamma$ to $\mathcal{W}$. 
Note that if $A\rtimes_{\alpha}\Gamma$ is $\mathcal{W}$-embeddable, then 
there exists a trace preserving homomorphism from  $A\rtimes_{\alpha}\Gamma$ to $\mathcal{W}$ 
by \cite[Lemma 6.3]{Na4}. 
Let $B$ be a simple separable non-type I nuclear monotracial C$^*$-algebra with strict comparison 
and $B\subset\overline{\mathrm{GL}(B^{\sim})}$, 
and let $\beta$ be an outer action of $\Gamma$ on $B$. 
In this section we shall show that there exists a trace preserving 
sequential asymptotic cocycle morphism from $(A, \alpha)$ to $(B, \beta)$.

\begin{lem}\label{lem:exist}
Let $A$ and $B$ be simple separable monotracial C$^*$-algebras, and let 
$\alpha$ and $\beta$ be outer actions of a countable discrete group $\Gamma$ on 
$A$ and $B$, respectively. Assume that $B \subset \overline{\mathrm{GL}(B^{\sim})}$. 
If there exists a homomorphism $\Phi$ from $A\rtimes_{\alpha}\Gamma$ to 
$(B^{\omega})^{\beta}$ such that 
$\tau_{A}\circ E_{\alpha}= \tau_{B, \omega}\circ \Phi$ where $E_{\alpha}$ is the canonical conditional 
expectation from $A\rtimes_{\alpha}\Gamma$ onto $A$, then there exists 
a trace preserving sequential asymptotic cocycle morphism from $(A, \alpha)$ to $(B, \beta)$. 
\end{lem}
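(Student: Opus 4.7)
The strategy is to set $\Psi := \Phi|_A \colon A \to (B^\omega)^\beta$ and construct, for each $g \in \Gamma$, a unitary $U_g \in (B^\sim)^\omega$ that plays the role of a lift of $\Phi(u_g)$, where $u_g \in M(A\rtimes_\alpha \Gamma)$ denotes the canonical implementing unitary. Since $\Phi$ does not a priori extend to the multiplier algebra, the element $\Phi(u_g)$ has to be realized indirectly by lifting approximations of the form $\Phi(e_k u_g) \in B^\omega$ for an approximate unit $\{e_k\}$ of $A$. The trace-preservation of $\Psi$ is immediate from the hypothesis $\tau_A\circ E_\alpha = \tau_{B,\omega}\circ \Phi$ restricted to $A\subset A\rtimes_\alpha\Gamma$, so the real content lies in producing the unitaries $U_g$ satisfying the covariance and cocycle relations.

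First I would apply a standard reindexation argument (Kirchberg's $\varepsilon$-test), possible because $A\rtimes_\alpha\Gamma$ is separable and $\Gamma$ is countable, to produce for each $g\in\Gamma$ a contraction $V_g\in B^\omega$ with representative $(v_n^g)_n$ such that $\Phi(y)\,V_g = \Phi(yu_g)$ and $V_g\,\Phi(y) = \Phi(u_g y)$ for every $y$ in a dense countable subset of $A\rtimes_\alpha\Gamma$, and hence for every $y \in A\rtimes_\alpha\Gamma$ by continuity. These identities encode the fact that $V_g$ behaves like $\Phi(u_g)$ as a two-sided multiplier of $\Phi(A\rtimes_\alpha\Gamma)$. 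Taking adjoints shows that $V_g^*$ behaves like $\Phi(u_g^*)$, and the relations $u_g^*u_g = u_gu_g^* = 1$ in $M(A\rtimes_\alpha\Gamma)$ yield $V_g^*V_g\,\Psi(a) = \Psi(a) = V_gV_g^*\,\Psi(a)$ together with their right-multiplicative counterparts. The relation $u_{gh}=u_gu_h$ gives the untwisted cocycle identity $\Psi(a)(V_{gh}-V_gV_h)=0$ automatically, and since $\Psi(a)V_g = \Phi(au_g)\in (B^\omega)^\beta$, applying $\beta_g^\omega$ to the equality $\Psi(a)V_{gh}=\Psi(a)V_gV_h$ (whose left-hand side is $\beta$-fixed) yields the twisted form $\Psi(a)(V_{gh}-V_g\beta_g^\omega(V_h))=0$. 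Similarly, $V_g\Psi(a)V_g^* = \Psi(\alpha_g(a))V_gV_g^* = \Psi(\alpha_g(a))$ gives the covariance relation, and combined with $\beta_g^\omega(\Psi(a))=\Psi(a)$ it reads $\mathrm{Ad}(V_g)\circ \beta_g^\omega\circ \Psi = \Psi\circ \alpha_g$.

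Next I would upgrade the $V_g$ to honest unitaries $U_g\in (B^\sim)^\omega$ using the hypothesis $B\subset \overline{\mathrm{GL}(B^\sim)}$, which is equivalent to $B^\sim$ having stable rank one. For each $n$, the element $v_n^g$ is, in the limit along $\omega$ and tested against representatives of $\Psi(A)$, an approximate isometry: one perturbs $v_n^g + (1_{B^\sim}-h_n)$ for a suitable positive contraction $h_n \in B$ to an invertible in $B^\sim$, and then extracts the unitary part $u_n^g$ from its polar decomposition, which is a genuine unitary in $B^\sim$ thanks to stable rank one. Choosing the perturbation small enough and running a diagonal argument over a dense countable subset of $A$ and an increasing exhaustion of $\Gamma$, one arranges $\Psi(a)(V_g-U_g) = (V_g-U_g)\Psi(a) = 0$ for every $a \in A$, where $U_g := [(u_n^g)_n]$. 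The covariance and cocycle relations established for $V_g$ then pass verbatim to $U_g$, producing the desired trace-preserving sequential asymptotic cocycle morphism $(\Psi,U)$ from $(A,\alpha)$ to $(B,\beta)$.

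The main obstacle is the unitary-lifting step: the quantitative perturbation from the approximately isometric $v_n^g$ to honest unitaries $u_n^g\in B^\sim$ must be carried out with enough uniformity that all cocycle identities, covariance identities, and $\beta$-twist compatibilities survive a single diagonal reindexation indexed by $g\in \Gamma$ together with the countable test sets in $A$. A secondary point is the verification that Kirchberg's $\varepsilon$-test can accommodate all of the defining identities for the family $\{V_g\}_{g\in\Gamma}$ simultaneously, but this reduces to checking countably many polynomial relations, which is straightforward once the approximations $\Phi(e_k u_g)$ are in place.
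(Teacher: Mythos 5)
Your proposal is correct and follows essentially the same route as the paper: restrict $\Phi$ to $A$, use a diagonal/reindexing argument on $\Phi(\lambda_g h_N)$ (your $\Phi(e_k u_g)$) to produce $\beta$-fixed elements $V_g$ acting as multipliers, then use $B\subset\overline{\mathrm{GL}(B^{\sim})}$ to replace each $V_g$ by the unitary $U_g=(v_{g,n}(v_{g,n}^*v_{g,n})^{-1/2})_n$ from the polar decomposition of invertible representatives, and finally transfer the covariance and (twisted) cocycle relations from $V_g$ to $U_g$ using the $\beta$-invariance of $\Phi(A\rtimes_{\alpha}\Gamma)$ and of the $V_g$. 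The only cosmetic difference is your extra perturbation $v_n^g+(1-h_n)$ before passing to invertibles, which is unnecessary but harmless.
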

\begin{proof}
Let $\{h_n\}_{n\in\mathbb{N}}$ be an approximate unit for $A$. We denote by $\lambda_g$ the 
implementing unitary of $\alpha_g$ in $M(A\rtimes_{\alpha}\Gamma)$. 
For any finite subsets $F\subset A$, $\Gamma_0\subset \Gamma$ and $\varepsilon>0$, 
there exists a natural number $N$ such that 
$$
\| \Phi (\lambda_gh_{N})\Phi(a)\Phi (\lambda_gh_{N})^*-\Phi (\alpha_g(a)) \| < \varepsilon,
\quad 
\| (\Phi(\lambda_gh_{N})^*\Phi(\lambda_gh_{N})-1)\Phi (a)\| < \varepsilon
$$
and 
$$
\| (\Phi (\lambda_{gh}h_{N})- \Phi(\lambda_g h_{N})\Phi (\lambda_h h_{N}))\Phi(a)\| <\varepsilon
$$
for any $a\in F$ and $g,h\in \Gamma_0$. By the diagonal argument, we obtain a set 
$\{V_g\; |\; g\in\Gamma \}$ of elements in $(B^{\omega})^{\beta}$ such that
$$
V_g\Phi (a)V_g^* =\Phi (\alpha_g (a)), \quad V_g^*V_g\Phi(a)=\Phi(a) \quad \text{and} \quad 
V_{gh}\Phi(a)= V_{g}V_{h}\Phi(a)
$$
for any $a\in A$ and $g, h\in\Gamma$. 
For any $g\in \Gamma$, there exists a sequence $\{v_{g, n}\}_{n\in\mathbb{N}}$ of invertible elements 
in $B^{\sim}$ such that $V_g=(v_{g, n})_n$ since $B \subset \overline{\mathrm{GL}(B^{\sim})}$. 
For any $g\in \Gamma$, let $U_g:= (v_{g, n}(v_{g, n}^*v_{g,n})^{-1/2})_n$. Then $U_g$ is a unitary element 
in $(B^{\sim})^{\omega}$ and $U_g\Phi (a)= V_g \Phi(a)$ for any $g\in\Gamma$ and $a\in A$. 
For any $a\in A$ and $g\in\Gamma$, we have 
$$
U_g\beta_g(\Phi(a))U_g^*= U_g\Phi (a) U_g^*= V_g\Phi(a)V_g^*=\Phi (\alpha_g (a)).
$$
Note that we also have $U_g\Phi(a)=\Phi(\alpha_g(a))U_g$. 
For any $a\in A$ and $g,h\in\Gamma$, we have 
\begin{align*}
\Phi(a)U_{gh}
&=U_{gh}\Phi(\alpha_{h^{-1}g^{-1}}(a))=V_{gh}\Phi (\alpha_{h^{-1}g^{-1}}(a))
=V_{g}V_{h}\Phi (\alpha_{h^{-1}g^{-1}}(a)) \\
&= V_{g}\beta_g(V_h\Phi (\alpha_{h^{-1}g^{-1}}(a)))
=V_{g}\beta_g(U_h\Phi (\alpha_{h^{-1}g^{-1}}(a))) \\
&=V_g\beta_g(\Phi(\alpha_{g^{-1}}(a))U_h) 
= V_g \Phi(\alpha_{g^{-1}}(a))\beta_g(U_h) \\
&= U_g\Phi(\alpha_{g^{-1}}(a))\beta_g(U_h) 
=\Phi(a)U_g\beta_g(U_h).
\end{align*}
Therefore $(\Phi|_A, U)$ is a trace preserving 
sequential asymptotic cocycle morphism from $(A, \alpha)$ to $(B, \beta)$. 
\end{proof}

By the lemma above, we would like to show that there exists a trace preserving homomorphism 
from $\mathcal{W}$ to $(B^{\omega})^{\beta}$. 
Actually, we shall show that there exists a trace preserving homomorphism 
from $\mathcal{W}$ to $(B^{\omega}\cap B^{\prime})^{\beta}$ because 
$(B^{\omega}\cap B^{\prime})^{\beta}$ has good properties rather than $(B^{\omega})^{\beta}$. 
Note that the proof of this result is based on 
Schafhauser's ideas \cite{Sc} in his proof of the Tikuisis-White-Winter theorem \cite{TWW}. 
See also \cite{Sc2} and \cite[Section 5]{Na4}. 
We say that an extension $0\longrightarrow I \longrightarrow C \longrightarrow A\longrightarrow 0$ 
is \textit{purely large} if for any $x\in C\setminus  I$, $\overline{xIx^*}$ contains a stable 
C$^*$-subalgebra which is full in $I$. We refer the reader to \cite{EllK} and \cite{G} for details of 
purely large extensions. By Proposition \ref{pro:equiv-surjective}, 
there exists the following extension: 
$$
\xymatrix{
\eta: & 0 \ar[r]  & \mathrm{ker}\; \varrho|_{(B^{\omega}\cap B^{\prime})^{\beta}} \ar[r] & 
(B^{\omega}\cap B^{\prime})^{\beta}  \ar[r] & M_{\omega}^{\tilde{\beta}} \ar[r] & 0
}
$$
where $M=\pi_{\tau_{B}}(B)^{''}$. Put 
$J:=\mathrm{ker}\; \varrho|_{(B^{\omega}\cap B^{\prime})^{\beta}}$. Note that 
$J=\{x\in (B^{\omega}\cap B^{\prime})^{\beta}\; | \; \tau_{B, \omega}(x^*x)=0\}$ and for 
any $b\in 
(B^{\omega}\cap B^{\prime})^{\beta}_{+}$,  $b\in J$ if and only if 
$\tau_{B, \omega}(b)=0$. 
Since $M_{\omega}^{\tilde{\beta}}$ is a II$_1$ factor by Proposition \ref{pro:MS-lemma-4.1}, 
there exists a trace preserving homomorphism $\Pi$ from $\mathcal{W}$ to 
$M_{\omega}^{\tilde{\beta}}$. 
Consider a pullback extension 
$$
\xymatrix{
\Pi^*\eta: &   0 \ar[r] 
&J  \ar[r]\ar@{=}[d] & E \ar[r]^{\hat{\varrho}}\ar[d]^{\hat{\Pi}} 
& \mathcal{W} \ar[r]\ar[d]^{\Pi} & 0 \\
\eta: & 0 \ar[r]  & J \ar[r] & (B^{\omega}\cap B^{\prime})^{\beta} 
\ar[r] & M_{\omega}^{\tilde{\beta}} \ar[r] & 0
}
$$
where $E=\{(a,x)\in \mathcal{W}\oplus (B^{\omega}\cap B^{\prime})^{\beta} \; |\; 
\Pi(a)=\varrho(x)\}$, $\hat{\varrho}((a,x))=a$ and $\hat{\Pi}((a,x))=x$ for any $(a,x)\in E$. 
Using Blackadar's technique (see \cite[II.8.5]{Bla}), we shall construct 
a separable C$^*$-subalgebras $B_0\subset  (B^{\omega}\cap B^{\prime})^{\beta},$ 
$J_0\subset J$ and  $M_0\subset M_{\omega}^{\tilde{\beta}}$ such that 
$\Pi(\mathcal{W})\subset M_0$ and 
$$
\xymatrix{
\eta_0: & 0 \ar[r]  & J_0 \ar[r]& B_0  \ar[r]^{\varrho|_{B_0}} \ar[r] & M_0 \ar[r]& 0.
}
$$
is a purely large extension as in \cite[Section 5]{Na4}. 

Since $M_{\omega}^{\tilde{\beta}}$ is a factor, the following proposition is an immediate consequence 
of Proposition \ref{pro:strict-comparison-notk}. 

\begin{pro}\label{pro:strict-comparison-7}
If $a$ and $b$ are positive elements in $(B^{\omega}\cap B^{\prime})^{\beta}$ 
satisfying $d_{\tau_{B, \omega}}(a)<d_{\tau_{B, \omega}}(b)$, then there exists an element $r$ in 
$(B^{\omega}\cap B^{\prime})^{\beta}$ such that $r^*br=a$. 
\end{pro}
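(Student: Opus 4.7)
The plan is to obtain this as a direct specialization of Proposition \ref{pro:strict-comparison-notk}. I would set $A := B$ and take $\Phi\colon B \to M(B)^{\omega}$ to be the canonical embedding as constant sequences; then $B^{\omega} \cap \Phi(B)^{\prime} = B^{\omega} \cap B^{\prime}$, and the conclusion of Proposition \ref{pro:strict-comparison-notk} becomes exactly the statement we want.

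First I would check that the standing hypotheses of Section \ref{sec:exist} supply all the quantitative assumptions needed for Proposition \ref{pro:strict-comparison-notk}: $B$ is simple separable non-type I nuclear monotracial with strict comparison; $\tau_B$ is faithful because $B$ is simple; $\tau_B = \tau_{B,\omega}\circ\Phi$ is automatic for the constant-sequence embedding; and in the unital case $\Phi$ is obviously unital. The given outer action $\beta$ of the countable discrete amenable group $\Gamma$ on $B$ extends entrywise to a semiliftable action $\beta^{\omega}$ on $M(B)^{\omega}$ which preserves $\Phi(B)$, and $\beta^{\omega}_g|_{\Phi(B)}$ is outer for each $g\in\Gamma\setminus\{\iota\}$ since $\beta$ is outer.

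The one hypothesis whose verification is not entirely tautological is that $\mathcal{M}(\Phi(B), B)^{\tilde{\beta}^{\omega}}$ is a factor. Here I would observe that $\varrho(\Phi(B)) = \pi_{\tau_B}(B)$ is SOT-dense in $M = \pi_{\tau_B}(B)^{\prime\prime}$; since the $2$-norm on $\mathcal{M}$ is faithful and $\mathcal{M}$ is a finite factor, this forces $\mathcal{M} \cap \varrho(\Phi(B))^{\prime} = \mathcal{M}\cap M^{\prime} = M_{\omega}$, and hence $\mathcal{M}(\Phi(B), B)^{\tilde{\beta}^{\omega}} = M_{\omega}^{\tilde{\beta}}$. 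By Proposition \ref{pro:MS-lemma-4.1}, which applies because $\Gamma$ is countable discrete amenable and $B$ is simple separable non-type I nuclear monotracial, $M_{\omega}^{\tilde{\beta}}$ is a $\mathrm{II}_1$ factor.

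There is no essential obstacle; the argument is a mechanical assembly of Propositions \ref{pro:strict-comparison-notk} and \ref{pro:MS-lemma-4.1} once the dictionary $A=B$, $\Phi=\mathrm{id}_B$ is in place. The ``furthermore'' clause of Proposition \ref{pro:strict-comparison-notk} then produces the desired $r \in (B^{\omega} \cap B^{\prime})^{\beta}$ with $r^{*} b r = a$.
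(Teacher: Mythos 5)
Your proposal is correct and is essentially the paper's own proof: the paper derives Proposition \ref{pro:strict-comparison-7} as an immediate consequence of Proposition \ref{pro:strict-comparison-notk}, using Proposition \ref{pro:MS-lemma-4.1} to see that $M_{\omega}^{\tilde{\beta}}=\mathcal{M}(\Phi(B),B)^{\tilde{\beta}^{\omega}}$ is a factor. You have simply spelled out the verification of the hypotheses in more detail than the paper does.
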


\begin{lem}
With notation as above, let $b$ be a positive element in 
$(B^{\omega}\cap B^{\prime})^{\beta}\setminus J$. 
Then there exists a positive contraction $f$ in 
$\overline{b (B^{\omega}\cap B^{\prime})^{\beta}b}$ such that 
$$
\inf_{m\in\mathbb{N}}\tau_{B, \omega}(f^m)>0.
$$ 
\end{lem}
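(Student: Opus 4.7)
The plan is to construct $f$ directly via the continuous functional calculus applied to $b$ itself, and then verify the trace inequality by viewing the restriction of $\tau_{B,\omega}$ to $C^*(b)$ as a Radon measure on $\mathrm{Sp}(b)$.

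First, I would observe that $b \notin J$ means $\tau_{B,\omega}(b^2) > 0$. The operator inequality $b^2 \le \|b\|\,b$ (which follows from $0 \le b \le \|b\|$ in the unitization) yields
\[
\tau_{B,\omega}(b) \;\ge\; \tau_{B,\omega}(b^2)/\|b\| \;>\; 0.
\]
Since $\|(b-\varepsilon)_+ - b\| \le \varepsilon$ and $\tau_{B,\omega}$ is norm-continuous (bounded by $1$), I can fix $\varepsilon > 0$ small enough that $\tau_{B,\omega}((b-\varepsilon)_+) > 0$.

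Next, I would take the continuous function $g_\varepsilon\colon[0,\infty)\to[0,1]$ which is $0$ on $[0,\varepsilon/2]$, equals $1$ on $[\varepsilon,\infty)$, and is linear in between, and set $f := g_\varepsilon(b)$. To verify $f \in \overline{b(B^\omega\cap B')^\beta b}$, note that $g_\varepsilon^{1/2}$ vanishes on $[0,\varepsilon/2]$, so the function $k(t) := g_\varepsilon(t)^{1/2}/t$ (with $k(0) := 0$) is continuous on $[0,\|b\|]$; consequently $f = g_\varepsilon(b) = b\,k(b)^2\,b$ already lies in $b(B^\omega\cap B')^\beta b$.

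Finally, I would verify $\inf_m \tau_{B,\omega}(f^m) > 0$. The restriction of $\tau_{B,\omega}$ to the commutative C$^*$-subalgebra $C^*(b) \subseteq (B^\omega\cap B')^\beta$ is a bounded positive linear functional, which under the Gelfand isomorphism $C^*(b) \cong C_0(\mathrm{Sp}(b)\setminus\{0\})$ corresponds, by Riesz--Markov, to integration against a finite positive Borel measure $\mu$ on $\mathrm{Sp}(b)\setminus\{0\}$. Because $g_\varepsilon \equiv 1$ on $[\varepsilon,\|b\|]$, the pointwise bound $g_\varepsilon(t)^m \ge \chi_{[\varepsilon,\|b\|]}(t)$ holds for every $m \ge 1$, so
\[
\tau_{B,\omega}(f^m) \;=\; \int g_\varepsilon(t)^m\,d\mu(t) \;\ge\; \mu\bigl([\varepsilon,\|b\|]\bigr).
\]
Combining this with $(t-\varepsilon)_+ \le \|b\|\cdot \chi_{[\varepsilon,\|b\|]}(t)$, which gives $0 < \tau_{B,\omega}((b-\varepsilon)_+) \le \|b\|\cdot\mu([\varepsilon,\|b\|])$, yields $\mu([\varepsilon,\|b\|]) > 0$, and therefore the required uniform lower bound.

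No substantial obstacle arises; the only point requiring a small trick is placing $f$ inside the hereditary subalgebra $\overline{b(B^\omega\cap B')^\beta b}$, which is handled by the factorization $f = b\,k(b)^2\,b$. In particular, neither strict comparison (Proposition \ref{pro:strict-comparison-7}) nor any nontrivial property of the von Neumann quotient $M_\omega^{\tilde\beta}$ is invoked.
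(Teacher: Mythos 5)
Your argument is correct, and it takes a genuinely different and considerably more elementary route than the paper. The paper's proof is structural: it chooses a projection $p$ of trace $d_{\tau_{B,\omega}}(b)/2$ in the II$_1$ factor $M_{\omega}^{\tilde{\beta}}$ (Proposition \ref{pro:MS-lemma-4.1}), lifts it to a positive contraction $e$ in $(B^{\omega}\cap B^{\prime})^{\beta}$ using the surjectivity of $\varrho$ on the fixed-point algebra (Proposition \ref{pro:equiv-surjective}), invokes strict comparison (Proposition \ref{pro:strict-comparison-7}) to find $r$ with $r^{*}br=e$, and sets $f:=b^{1/2}rr^{*}b^{1/2}$, so that $\varrho(f)$ is a projection Murray--von Neumann equivalent to $p$ and hence $\inf_{m}\tau_{B,\omega}(f^{m})=\tilde{\tau}(\varrho(f))>0$. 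Your $f=g_{\varepsilon}(b)$ lives entirely in the commutative algebra $C^{*}(b)$: the factorization $f=b\,k(b)^{2}b$ correctly places $f$ in $b(B^{\omega}\cap B^{\prime})^{\beta}b$ (no closure even needed), and the uniform lower bound via the Riesz--Markov measure of $\tau_{B,\omega}|_{C^{*}(b)}$ is sound, since $g_{\varepsilon}^{m}\geq\chi_{[\varepsilon,\|b\|]}\geq \|b\|^{-1}(\,\cdot\,-\varepsilon)_{+}$ pointwise on $\mathrm{Sp}(b)$. (One can even dispense with the measure: taking a continuous $h$ equal to $0$ on $[0,\varepsilon]$ and to $1$ on $[2\varepsilon,\infty)$, one has the operator inequalities $f^{m}=g_{\varepsilon}^{m}(b)\geq h(b)\geq \|b\|^{-1}(b-2\varepsilon)_{+}$ for every $m$, whence $\inf_m\tau_{B,\omega}(f^m)\geq(\tau_{B,\omega}(b)-2\varepsilon)/\|b\|>0$.) What the paper's route buys is the extra information that $\varrho(f)$ is an honest projection of prescribed trace, but this is not used later --- Proposition \ref{pro:main-lem-blatech} needs only the stated property of $f$ --- whereas your argument uses none of the factoriality, surjectivity, or comparison machinery and would work verbatim for any tracial state on any C$^*$-algebra containing $b$. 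So this is a legitimate simplification rather than a gap.
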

\begin{proof}
We may assume that $b$ is a contraction. Then we have 
$d_{\tau_{B, \omega}}(b)\geq \tau_{B, \omega}(b)>0$. Since $M_{\omega}^{\tilde{\beta}}$ is a 
II$_1$ factor, there exists a projection $p$ in $M_{\omega}^{\tilde{\beta}}$ such that 
$\tilde{\tau}(p)=d_{\tau_{B, \omega}}(b)/2>0$ where $\tilde{\tau}$ is the unique tracial state on 
$M_{\omega}^{\tilde{\beta}}$. 
Note that we have 
$\tau_{B, \omega}= \tilde{\tau}\circ \varrho|_{(B^{\omega}\cap B^{\prime})^{\beta}}$. 
By surjectivity of 
$\varrho|_{(B^{\omega}\cap B^{\prime})^{\beta}}$, there exists a positive contraction 
$e$ in $(B^{\omega}\cap B^{\prime})^{\beta}$ such that $\varrho (e)=p$. 
Since we have 
$$
d_{\tau_{B, \omega}}(e)=\lim_{m\to\infty}\tau_{B, \omega}(e^{\frac{1}{m}})
=\lim_{m\to\infty}\tilde{\tau}(\varrho (e^{\frac{1}{m}}))
=\lim_{m\to\infty}\tilde{\tau}(p)=\frac{d_{\tau_{B, \omega}}(b)}{2},
$$  
Proposition \ref{pro:strict-comparison-7} implies that there exists an element $r$ in 
$(B^{\omega}\cap B^{\prime})^{\beta}$ such that $r^*br=e$. Put $f:= b^{1/2}rr^*b^{1/2}$. 
Then $f$ is a positive contraction in $\overline{b (B^{\omega}\cap B^{\prime})^{\beta}b}$. 
Note that $\varrho(f)$ is a projection in  $M_{\omega}^{\tilde{\beta}}$ 
satisfying $\tilde{\tau}(\varrho(f))=\tilde{\tau}(p)>0$ 
because $\varrho(f)$ is Murray-von Neumann equivalent to $\varrho(e)=p$. 
Therefore we have 
\begin{align*}
\inf_{m\in\mathbb{N}}\tau_{B, \omega}(f^m)=\inf_{m\in\mathbb{N}}
\tilde{\tau}(\varrho (f^m))=\inf_{m\in\mathbb{N}}\tilde{\tau}(\varrho(f))=\tilde{\tau}(\varrho(f))>0.
\end{align*} 
\end{proof}

The following proposition is an equivariant version of \cite[Proposition 5.4]{Na4}. 
Note that we consider $(B^{\omega}\cap B^{\prime})^{\beta}$ rather than $(B^{\omega})^{\beta}$. 
\begin{pro}\label{pro:main-lem-blatech}
With notation as above, let $b$ be a positive element in 
$(B^{\omega}\cap B^{\prime})^{\beta}\setminus J$. \ \\
(i) For any positive element $a$ in $\overline{bJb}$, there exists a positive element $c$ in 
$\overline{bJb}$ such that $ac=0$ and $c$ is Murray-von Neumann equivalent to $a$ in 
$\overline{bJb}$. \ \\
(ii) For any positive element $a$ in $J$, there exist a positive element $d$ in $\overline{bJb}$ and 
an element $r$ in $J$ such that $r^*dr=a$. 
\end{pro}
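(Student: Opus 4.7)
The plan is to apply property (SI) (Theorem \ref{thm:si}) with $A=B$ and $\Phi=\mathrm{id}_B$, supported by the key observation that every positive element of $J$ has vanishing dimension function. Indeed, if $(y_n)$ represents $y\in J_+$ by positive contractions, then $\lim_{n\to\omega}\tau_B(y_n^2)=0$, and iterated Cauchy--Schwarz yields $\tau_B(y_n^{1/k})\leq\tau_B(y_n^{2^m/k})^{1/2^m}$; choosing $m$ so that $2^m/k\geq 2$ and using $y_n^{2^m/k}\leq y_n^2$ forces $\tau_{B,\omega}(y^{1/k})=0$ for every $k$, whence $d_{\tau_{B,\omega}}(y)=0$.

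For (ii), scaling to $\|a\|\leq 1$, I apply Theorem \ref{thm:si} with $e:=a^{1/2}\in J$ (using closure of $J$ under continuous functional calculus vanishing at $0$) and $f$ from the preceding lemma; this produces $s\in(B^{\omega}\cap B')^{\beta}$ with $fs=s$ and $s^*s=a^{1/2}$. Setting $r:=s$ and $d:=ss^*$ gives $r^*dr=(s^*s)^2=a$, with $r\in J$ since $r^*r=a^{1/2}\in J$. The identity $ss^*=fss^*f$ (from $fs=s$) together with an approximation $f=\lim_n by_n b$, $y_n\in(B^{\omega}\cap B')^{\beta}$, yields $d=\lim_n b(y_nbss^*by_n)b$; cyclicity of $\tau_{B,\omega}$ and $\tau_{B,\omega}(s^*s)=0$ force $\tau_{B,\omega}(y_nbss^*by_n)=0$, so the positive element $y_nbss^*by_n$ lies in $J$ and $d\in\overline{bJb}$.

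For (i), the task reduces to producing a positive contraction $f\in\overline{b(B^{\omega}\cap B')^{\beta}b}$ with $fa=0$ and $\inf_m\tau_{B,\omega}(f^m)>0$. Granted such $f$, Theorem \ref{thm:si} with $e:=a$ yields $s\in(B^{\omega}\cap B')^{\beta}$ with $fs=s$ and $s^*s=a$; then $c:=ss^*$ and $z:=s$ satisfy $z^*z=a$, $zz^*=c$, and $ac=af\cdot ss^*\cdot f=0$ since $af=fa=0$. Mirroring (ii) shows $c\in\overline{bJb}$. Since $\overline{bJb}$ is a closed two-sided ideal in the hereditary subalgebra $\overline{b(B^{\omega}\cap B')^{\beta}b}$ of $(B^{\omega}\cap B')^{\beta}$, it is itself hereditary in $(B^{\omega}\cap B')^{\beta}$; combining this with the polar decomposition identity $z=\lim_{n}(zz^*)^{1/(2n)}z(z^*z)^{1/(2n)}$ and $(zz^*)^{1/(2n)},(z^*z)^{1/(2n)}\in\overline{bJb}$ places $z$ in $\overline{bJb}\cdot(B^{\omega}\cap B')^{\beta}\cdot\overline{bJb}\subset\overline{bJb}$.

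The main obstacle is constructing $f$ in (i). Starting from $f_0$ given by the preceding lemma, my plan is to apply Kirchberg's $\varepsilon$-test with the bump functions $g_\varepsilon(t)=\max(0,1-t/\varepsilon)$: after a reindexing that makes $f_0$ approximately commute with a representative $(a_n)$ of $a$, I form a diagonal element built from terms of the shape $bg_{\varepsilon_n}(a_n)y_n g_{\varepsilon_n}(a_n)b$ (for lifts $by_n b\to f_0$) with $\varepsilon_n\downarrow 0$. The norm bound $\|g_{\varepsilon_n}(a_n)a_n\|\leq\varepsilon_n/4$ forces $fa=0$ exactly in the ultrapower, while the trace lower bound $\inf_m\tau_{B,\omega}(f^m)>0$ survives because $1-g_{\varepsilon_n}(a_n)$ represents an element of $J$ in the limit (as $\varrho(a)=0$), whose image in $M_{\omega}^{\tilde\beta}$ vanishes; this is precisely where the key observation $d_{\tau_{B,\omega}}(a)=0$ is used, ensuring sufficient room outside the spectral support of $a$.
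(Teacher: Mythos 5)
Your part (ii) is correct and takes a genuinely different route from the paper's proof: there, $d$ and $r$ come from strict comparison (Proposition \ref{pro:strict-comparison-7}), choosing $t$ with $t^*bt=a^{1/5}$ and setting $d:=bta^{1/5}t^*b$, $r:=ta^{1/5}$, whereas you invoke property (SI) (Theorem \ref{thm:si}) with $e=a^{1/2}$. Both work; yours trades an elementary comparison argument for the heavier Theorem \ref{thm:si}, and the verification $r^*dr=(s^*s)^2=a$ together with $d=fss^*f\in\overline{b(B^{\omega}\cap B^{\prime})^{\beta}b}\cap J=\overline{bJb}$ is clean. Your preliminary observation that $d_{\tau_{B,\omega}}$ vanishes on $J_{+}$ is also correct (and is exactly what the paper's own step $d_{\tau_{B,\omega}}(a^{1/5})=0$ relies on; note it also follows in one line from $\varrho(y)=0$).

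In part (i) your reduction --- produce a positive contraction $f\in\overline{b(B^{\omega}\cap B^{\prime})^{\beta}b}$ with $fa=0$ and $\inf_{m}\tau_{B,\omega}(f^{m})>0$, then feed $e:=a$ and this $f$ into Theorem \ref{thm:si} --- is precisely the paper's strategy, and everything downstream (including your careful check that the implementing element lies in $\overline{bJb}$) is fine. The gap is in the construction of $f$. Your diagonal terms $bg_{\varepsilon_n}(a_n)y_ng_{\varepsilon_n}(a_n)b$ put $b$ between the cut-down $g_{\varepsilon_n}(a_n)$ and $a_n$, so $fa$ is governed by $g_{\varepsilon_n}(a_n)ba_n$ rather than by $g_{\varepsilon_n}(a_n)a_n$; since $a$ and $b$ are two generally non-commuting elements of the central sequence algebra (each commutes with $B$, not with the other), the bound $\|g_{\varepsilon_n}(a_n)a_n\|\leq\varepsilon_n/4$ gives no control on this product, and no reindexing can make the \emph{given} $a$ commute with $b$. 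As written the step fails. It is repairable: conjugate the whole lift of $f_0$ on the outside, i.e.\ use $g_{\varepsilon_n}(a_n)(by_nb)g_{\varepsilon_n}(a_n)$; since $g_{\varepsilon}(a)=1-h_{\varepsilon}(a)$ with $h_{\varepsilon}(a)\in\overline{bJb}$, this conjugate still lies in the hereditary subalgebra $\overline{b(B^{\omega}\cap B^{\prime})^{\beta}b}$, it annihilates $a$ up to $\varepsilon_n/4$ in norm, and its image under $\varrho$ equals $\varrho(f_0)$, so the trace condition survives the $\varepsilon$-test. The paper avoids the spectral cut-down altogether: since $f_0^{1/2}a\in J$ and $J$ is a $\sigma$-ideal, \cite[Lemma 4.4]{MS2} yields a positive contraction $e\in J$ with $af_0^{1/2}e=af_0^{1/2}$, and then $\tilde{f}:=f_0-f_0^{1/2}ef_0^{1/2}$ satisfies $a\tilde{f}=0$ and $\varrho(\tilde{f})=\varrho(f_0)$.
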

\begin{proof}
(i) We may assume that $a$ is a contraction. By the lemma above, there exists a positive contraction 
$f$ in $\overline{b(B^{\omega}\cap B^{\prime})^{\beta}b}$ such that 
$\inf_{m\in\mathbb{N}}\tau_{B, \omega}(f^m)>0$. Since $f^{1/2}a$ is an element in $J$, there exists 
a positive contraction $e\in J$ such that $af^{1/2}e=eaf^{1/2}=af^{1/2}$ by \cite[Lemma 4.4]{MS2}. 
Put $\tilde{f}= f-f^{1/2}ef^{1/2}\in \overline{b(B^{\omega}\cap B^{\prime})^{\beta}b}_+$, then we have 
$a\tilde{f}=0$. 
Since we have $\varrho(\tilde{f}^{m})= \varrho (f^{m})$ for any $m\in\mathbb{N}$, 
$$
\inf_{m\in\mathbb{N}}\tau_{B}(\tilde{f}^{m})=\inf_{m\in\mathbb{N}}\tau_{B, \omega}(f^m)>0. 
$$
Therefore Theorem \ref{thm:si} implies that there exists an element $s$ in 
$(B^{\omega}\cap B^{\prime})^{\beta}$ such that $\tilde{f}s=s$ and $s^*s=a$. Put $c:=ss^*$, then 
$ac=ass^*=a\tilde{f}ss^*=0$. 
Since we have $\tilde{f}c\tilde{f}=c$, $c$ is an element in 
$\overline{b(B^{\omega}\cap B^{\prime})^{\beta}b}$. 
Furthermore, $c$ is an element in $\overline{bJb}$ and is Murray-von Neumann equivalent to 
$a$ in $\overline{bJb}$ because $\overline{bJb}$ is a closed ideal of 
$\overline{b(B^{\omega}\cap B^{\prime})^{\beta}b}$. 

(ii) We may assume that $b$ is a contraction. Since we have 
$d_{\tau_{B, \omega}}(b)\geq \tau_{B, \omega}(b)>0$ and 
$d_{\tau_{B, \omega}}(a^{1/5})=0$, there exists an element $t$ such that $t^*bt= a^{1/5}$ 
by Proposition \ref{pro:strict-comparison-7}. 
Put $d:= bta^{1/5}t^*b\in \overline{bJb}$ and $r:=ta^{1/5}\in J$, then we have $r^*dr= a$. 
\end{proof}

Using Proposition \ref{pro:main-lem-blatech}.(ii) instead of \cite[Proposition 5.4.(ii)]{Na4}, 
the same proof as \cite[Lemma 5.9]{Na4} (based on Blackadar's technique) shows the following 
lemma. 

\begin{lem} \label{lem:5.9}
With notation as above, let $\{b_k\; |\; k\in\mathbb{N}\}$ be a countable subset of 
$(B^{\omega}\cap B^{\prime})^{\beta}\setminus J$ and $S$ be a separable subset of 
$(B^{\omega}\cap B^{\prime})^{\beta}$. Then there exists a separable C$^*$-subalgebra $A$ of 
$(B^{\omega}\cap B^{\prime})^{\beta}$ such that
$\{b_k\; |\; k\in\mathbb{N}\}\cup S\subset A$ and $\overline{b_k(A\cap J)b_k}$ is full in 
$A\cap J$ for any $k\in\mathbb{N}$. 
\end{lem}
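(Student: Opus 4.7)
The plan is to carry out a Blackadar-style inductive construction, in which $A$ is built as the closed union of an increasing sequence of separable C$^*$-subalgebras $A_0\subseteq A_1\subseteq\cdots$ of $(B^{\omega}\cap B^{\prime})^{\beta}$; at each stage the inductive hypothesis feeds a countable dense subset of $(A_n\cap J)_+$ into Proposition~\ref{pro:main-lem-blatech}.(ii) separately for each $b_k$, producing the witnesses that will certify fullness in the limit.

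Concretely, I would set $A_0:=C^*(\{b_k:k\in\mathbb N\}\cup S)$. Given separable $A_n$, $A_n\cap J$ is separable as well, so one can fix a countable dense subset $\{a_{n,i}\}_{i\in\mathbb N}$ of $(A_n\cap J)_+$. For every pair $(i,k)\in\mathbb N^2$, apply Proposition~\ref{pro:main-lem-blatech}.(ii) with $b=b_k$ and $a=a_{n,i}$ to obtain a positive $d_{n,i,k}\in\overline{b_kJb_k}$ and $r_{n,i,k}\in J$ with $r_{n,i,k}^*d_{n,i,k}r_{n,i,k}=a_{n,i}$, and also pick a sequence $\{y_{n,i,k,m}\}_{m\in\mathbb N}\subset J$ with $b_ky_{n,i,k,m}b_k\to d_{n,i,k}$ in norm. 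Let $A_{n+1}$ be the C$^*$-algebra generated by $A_n$ together with all the $d_{n,i,k}$, $r_{n,i,k}$, and $y_{n,i,k,m}$, and set $A:=\overline{\bigcup_nA_n}$. Because each of these generators lies in $J$, they all belong to $A\cap J$; in particular $d_{n,i,k}=\lim_m b_ky_{n,i,k,m}b_k\in\overline{b_k(A\cap J)b_k}$, and hence $a_{n,i}=r_{n,i,k}^*d_{n,i,k}r_{n,i,k}$ lies in the closed ideal of $A\cap J$ generated by $\overline{b_k(A\cap J)b_k}$ for every $(n,i,k)$.

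To promote this from the countable family $\{a_{n,i}\}$ to arbitrary $a\in(A\cap J)_+$, I invoke the isometric embedding $A_n/(A_n\cap J)\hookrightarrow M_{\omega}^{\tilde\beta}$, induced by the composition $A_n\hookrightarrow(B^{\omega}\cap B^{\prime})^{\beta}\xrightarrow{\varrho}M_{\omega}^{\tilde\beta}$ whose kernel on $A_n$ is exactly $A_n\cap J$. Given $a\in(A\cap J)_+$ with $\|a\|=1$ and $\varepsilon>0$, choose $n$ and a positive $\tilde a\in A_n$ with $\|\tilde a-a\|<\varepsilon$; since $\varrho(a)=0$ we obtain $\|\varrho(\tilde a)\|<\varepsilon$, so the functional-calculus cut-off $\hat a:=(\tilde a-\varepsilon)_+$ lies in $(A_n\cap J)_+$ with $\|\tilde a-\hat a\|\le\varepsilon$. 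Density then yields $\|\hat a-a_{n,i}\|<\varepsilon$ for some $i$, hence $\|a-a_{n,i}\|<3\varepsilon$; since $a_{n,i}$ lies in the closed ideal of $A\cap J$ generated by $\overline{b_k(A\cap J)b_k}$ by construction, so does $a$.

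The principal obstacle is precisely this last approximation step: a priori, proximity of $a\in(A\cap J)_+$ to an element of $A_n$ does not force proximity to $A_n\cap J$, so the witnesses built for $\{a_{n,i}\}_i$ might seem to miss the full limit algebra $(A\cap J)_+$. The isometric embedding $A_n/(A_n\cap J)\hookrightarrow M_{\omega}^{\tilde\beta}$ is what resolves this, converting the vanishing of $\varrho(a)$ into an honest norm estimate on the quotient $A_n/(A_n\cap J)$ that is then realised by the cut-off $(\tilde a-\varepsilon)_+\in A_n\cap J$; beyond this observation, the construction is a routine bookkeeping induction entirely analogous to that of \cite[Lemma 5.9]{Na4}.
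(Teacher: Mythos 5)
Your proof is correct and follows essentially the same route as the paper, which simply invokes the Blackadar-technique argument of \cite[Lemma 5.9]{Na4} with Proposition \ref{pro:main-lem-blatech}.(ii) supplying the witnesses; your inductive construction of the $A_n$ and the passage from the countable dense family $\{a_{n,i}\}$ to all of $(A\cap J)_+$ via the isometry $A_n/(A_n\cap J)\cong\varrho(A_n)$ and the cut-off $(\tilde a-\varepsilon)_+$ is exactly the standard argument being cited. No gaps.
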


Using Proposition \ref{pro:main-lem-blatech} and Lemma \ref{lem:5.9} instead of 
\cite[Proposition 5.4]{Na4}, \cite[Lemma 5.8]{Na4} and \cite[Lemma 5.10]{Na4}, 
we can construct a purely large separable extension $\eta_0$ by a 
similar way as in \cite[Section 5]{Na4}. We shall give some details for the reader's convenience. 

Since $\varrho|_{(B^{\omega}\cap B^{\prime})^{\beta}}$ is surjective and $\mathcal{W}$ is 
separable, there exists a separable C$^*$-subalgebra $B_1$ of $(B^{\omega}\cap B^{\prime})^{\beta}$
such that $\Pi (\mathcal{W})\subset \varrho (B_1)$. 
By separability of $B_1$, there exist a countable dense subset 
$\{a_{1, m}\; | \; m\in\mathbb{N}\}$ of $(B_1\cap J)_{+}$ and a countable dense subset 
$\{b_{1,k}\; | \; k\in \mathbb{N} \}$ of $B_{1+}$. Put 
$$
T_1:=\{(k,l)\in\mathbb{N}\times\mathbb{N}\; |\; (b_{1,k}-1/l)_{+}\notin J\}.
$$
By Proposition \ref{pro:main-lem-blatech}, for any $(k,l)\in T_1$ and $m\in\mathbb{N}$,
there exist elements $c_{1,1,(k,l),m}$ and $z_{1,1,(k,l),m}$ in 
$\overline{(b_{1, k}-1/l)_{+}J(b_{1, k}-1/l)_{+}}$ such that 
$$
(b_{1,k}-1/l)_{+}a_{1,m}(b_{1,k}-1/l)_{+}c_{1,1,(k,l),m}=0, \quad 
z_{1,1,(k,l),m}z_{1,1,(k,l),m}^*=c_{1,1,(k,l),m}
$$
and
$$
z_{1,1,(k,l),m}^*z_{1,1,(k,l),m}=(b_{1,k}-1/l)_{+}a_{1,m}(b_{1,k}-1/l)_{+}.
$$
Lemma \ref{lem:5.9} implies that there exists a separable C$^*$-subalgebra $B_2$ of 
$(B^{\omega}\cap B^{\prime})^{\beta}$ such that 
$$
\{c_{1,1,(k,l),m}, z_{1,1,(k,l),m}\; |\; (k,l)\in T_1, m\in\mathbb{N}\}\cup B_1 \subset B_2
$$
and $\overline{(b_{1, k}-1/l)_{+}(B_2\cap J)(b_{1,k}-1/l)_{+}}$ is full in $B_2\cap J$ for any $(k,l)\in T_1$. 
Repeating this process, for any $n\in\mathbb{N}$, there exist a separable C$^*$-subalgebra 
$B_{n}$ of $(B^{\omega}\cap B^{\prime})^{\beta}$, countable dense subsets 
$\{a_{n, m}\;| \; m\in\mathbb{N}\}$ of $(B_{n}\cap J)_{+}$, 
$\{b_{n, m}\; |\; m\in\mathbb{N}\}$ of $B_{n+}$, a subset $T_{n}$ of 
$\mathbb{N}\times \mathbb{N}$ and a subset 
$\{c_{n,i,(k,l),m}, z_{n,i,(k,l),m}\; |\; 1\leq i\leq n, (k,l)\in T_i, m\in\mathbb{N}\}$ of $J$  such that 
$$
B_{n}\subset B_{n+1}, \quad T_{n}=\{(k,l)\in\mathbb{N}\times \mathbb{N}\; |\; (b_{n,k}-1/l)_+\notin J\},
$$
$$
c_{n,i,(k,l),m}, z_{n,i,(k,l),m}\in \overline{(b_{i,k}-1/l)_{+}(B_{n+1}\cap J)(b_{i,k}-1/l)_{+}},
$$
$$
(b_{i,k}-1/l)_{+}a_{n,m}(b_{i,k}-1/l)_{+}c_{n,i,(k,l),m}=0, \quad 
z_{n,i,(k,l),m}z_{n,i,(k,l),m}^*=c_{n,i,(k,l),m},
$$
$$
z_{n,i,(k,l),m}^*z_{n,i,(k,l),m}=(b_{i,k}-1/l)_{+}a_{n,m}(b_{i,k}-1/l)_{+}
$$
and 
$\overline{(b_{i,k}-1/l)_{+}(B_{n+1}\cap J)(b_{i, k}-1/l)_{+}}$ is full in 
$B_{n+1}\cap J$ for any $1\leq i\leq n$ and $(k,l)\in T_i$. 
Put 
$$
B_0:= \overline{\bigcup_{n=1}^{\infty}B_n}, \quad J_0:=B_0\cap J, \quad 
M_0 := \varrho (B_0)  
$$
and 
$$
\xymatrix{
\eta_0: & 0 \ar[r]  & J_0 \ar[r]& B_0  \ar[r]^{\varrho|_{B_0}} \ar[r] & M_0 \ar[r]& 0.
}
$$
Then $\eta_0$ is a separable extension and $\Pi (\mathcal{W})\subset M_0$. 
For any $i\in\mathbb{N}$ and $(k,l)\in T_{i}$, 
$\overline{(b_{i,k}-1/l)_{+}J_0(b_{i,k}-1/l)_{+}}$ is full in $J_0$ because we have 
$J_0=\overline{\bigcup_{n=1}^{\infty}(B_n\cap J)}$ and 
\begin{align*}
B_{n}\cap J 
&=\overline{B_{n}\cap J(b_{i,k}-1/l)_{+}(B_{n}\cap J)(b_{i,k}-1/l)_{+}B_{n}\cap J} \\
&\subset \overline{J_0(b_{i,k}-1/l)_{+}J_0(b_{i,k}-1/l)_{+}J_0} 
\end{align*}
for any $n> i$. Also, for any $n_0\in\mathbb{N}$, 
$\{a_{n, m}\; |\; n, m\in\mathbb{N}, n \geq n_0\}$ is dense in $J_{0+}$ and 
$\{b_{n, k}\; |\; n, k\in\mathbb{N}, n\geq n_0\}$ is dense in $B_{0+}$. 
The same proof as in \cite[Section 5]{Na4} shows that $J_0$ is stable. 
Since we consider $(B^{\omega}\cap B^{\prime})^{\beta}$ rather than $(B^{\omega})^{\beta}$, 
we shall give a detailed proof of the following lemma. 

\begin{lem}
With notation as above, $\eta_0$ is a separable purely large extension. 
\end{lem}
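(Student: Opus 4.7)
The plan is to follow the strategy of \cite[Section 5]{Na4}, adapted to the fixed-point setting of $(B^{\omega}\cap B^{\prime})^{\beta}$. Separability of $\eta_0$ is immediate since $B_0$ is a countable directed union of separable C$^*$-subalgebras. For pure largeness, given $x\in B_0\setminus J_0$, I need to produce a stable C$^*$-subalgebra of $\overline{xJ_0 x^*}$ that is full in $J_0$. The natural candidate is $D := \overline{(b_{i,k}-1/l)_+ J_0 (b_{i,k}-1/l)_+}$ for a carefully chosen triple $(i,k,l)$ with $(k,l)\in T_i$, because the very construction of $B_0$ was rigged so that such $D$ is full in $J_0$ and comes with an abundance of orthogonal Murray-von Neumann equivalent pairs $(z_{n,i,(k,l),m}, c_{n,i,(k,l),m})$.

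First I would show that the triple $(i,k,l)$ can be chosen so that $D\subset \overline{xJ_0 x^*}$. Since $J_0=B_0\cap J$ and $x\notin J_0$, we have $x\notin J$, hence $xx^*\in B_{0+}\setminus J$ and $(xx^*-\varepsilon)_+\notin J$ for all sufficiently small $\varepsilon>0$. By the density of $\{b_{n,k}\mid n\geq n_0,\,k\in\mathbb{N}\}$ in $B_{0+}$ for any $n_0$, I can pick $i$ arbitrarily large together with $k$ and a perturbation radius $\delta>0$ such that $\|b_{i,k}-xx^*\|<\delta$; then I choose $l$ with $\delta<1/l<\varepsilon$ so that $(b_{i,k}-1/l)_+\notin J$, which places $(k,l)$ in $T_i$. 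R{\o}rdam's perturbation lemma now supplies $d\in B_0$ with $d^*(xx^*)d=(b_{i,k}-1/l)_+$, so $(b_{i,k}-1/l)_+\in \overline{xx^*B_0\,xx^*}$, and hence also $(b_{i,k}-1/l)_+^{1/2}\in\overline{xx^*B_0\,xx^*}$ by functional calculus. Using that $J_0$ is an ideal of $B_0$ together with the identity $xx^*\,y\,xx^*=x(x^*yx)x^*$, I deduce $D\subset \overline{xJ_0 x^*}$.

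Next I would verify fullness and stability of $D$. Fullness is built into the construction: $\overline{(b_{i,k}-1/l)_+(B_{n+1}\cap J)(b_{i,k}-1/l)_+}$ is full in $B_{n+1}\cap J$ for every $n>i$, and $J_0=\overline{\bigcup_n(B_n\cap J)}$, so the closed ideal of $J_0$ generated by $D$ is all of $J_0$. For stability I would invoke the Hjelmborg-R{\o}rdam criterion: for each $n>i$ and each $a_{n,m}\in (B_n\cap J)_+$ (these are dense in $J_{0+}$ as $n$ grows), the element $(b_{i,k}-1/l)_+ a_{n,m}(b_{i,k}-1/l)_+\in D$ is Murray-von Neumann equivalent, via $z_{n,i,(k,l),m}\in D$, to the orthogonal positive element $c_{n,i,(k,l),m}\in D$. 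Since the Murray-von Neumann witnesses $z_{n,i,(k,l),m}$ and the orthogonal partners $c_{n,i,(k,l),m}$ lie in $\overline{(b_{i,k}-1/l)_+(B_{n+1}\cap J)(b_{i,k}-1/l)_+}\subset D$, the Hjelmborg-R{\o}rdam property propagates to the norm closure of all such cut-downs, i.e.\ to every positive element of $D$, so $D$ is stable.

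The main obstacle is the inclusion $D\subset \overline{xJ_0 x^*}$: R{\o}rdam's perturbation lemma only positions $(b_{i,k}-1/l)_+$ inside the hereditary subalgebra $\overline{xx^*B_0\,xx^*}$, and squeezing a $J_0$-sandwich through into $\overline{xJ_0 x^*}$ forces me to invoke the ideal structure of $J_0$ in $B_0$ through the approximation of $(b_{i,k}-1/l)_+^{1/2}$ by elements of the form $xx^* c\, xx^*$. A secondary technical point, already defused by the construction, is ensuring that the stability witnesses genuinely lie in $D$ rather than in the larger algebra $\overline{(b_{i,k}-1/l)_+ B_0 (b_{i,k}-1/l)_+}$; this is handled by the recorded fact that $c_{n,i,(k,l),m},z_{n,i,(k,l),m}\in \overline{(b_{i,k}-1/l)_+(B_{n+1}\cap J)(b_{i,k}-1/l)_+}$.
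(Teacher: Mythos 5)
There is a genuine gap at the step you yourself flag as the main obstacle, and your proposed resolution does not close it. From $d^*(xx^*)d=(b_{i,k}-1/l)_+$ you conclude that $(b_{i,k}-1/l)_+\in\overline{xx^*B_0\,xx^*}$, but this is a non sequitur: the element $d^*(xx^*)d$ lies in the hereditary subalgebra $\overline{d^*B_0d}$, not in the one generated by $xx^*$. Writing $y:=(xx^*)^{1/2}d$, what the perturbation lemma \cite[Lemma 2.2]{KR2} actually gives you is $(b_{i,k}-1/l)_+=y^*y$, which is Murray--von Neumann equivalent to $yy^*\in\overline{xx^*B_0\,xx^*}$ but need not itself belong to that hereditary subalgebra (think of $xx^*$ a projection and $b_{i,k}$ a nearby positive element not supported under it). Consequently the inclusion $D\subset\overline{xJ_0x^*}$ on which your whole argument rests is unjustified, and the subsequent appeal to ``functional calculus'' and the ideal property of $J_0$ only makes sense once that false membership is granted.

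The paper repairs exactly this point by inserting a partial isometry: taking the polar decomposition $y=v|y|$ in $B_0^{**}$, the element $v$ conjugates $\overline{y^*B_0y}=\overline{(b_{i,k}-1/l)_+B_0(b_{i,k}-1/l)_+}$ onto $\overline{yB_0y^*}\subset\overline{xx^*B_0\,xx^*}$, and the candidate subalgebra is $C:=vDv^*\subset\overline{xx^*J_0\,xx^*}=\overline{xJ_0x^*}$ rather than $D$ itself. One must then re-verify fullness of $C$ (immediate, since $v\cdot v^*$ is an isomorphism of $D$ onto $C$) and transport the stability witnesses: the orthogonal Murray--von Neumann equivalent pairs $(z,c)$ built into the construction are replaced by $(vzv^*,vcv^*)$, and the identities $z^{\prime*}z^{\prime}=a^{\prime\prime}$, $z^{\prime}z^{\prime*}=c^{\prime}$, $a^{\prime\prime}c^{\prime}=0$ are checked using that $v^*v$ acts as a unit on $\overline{y^*B_0y}$, followed by the $\varepsilon$-estimate $\|ac^{\prime}\|<\varepsilon$ needed for the Hjelmborg--R{\o}rdam criterion. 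The rest of your outline (the choice of $(i,k,l)$ with $(k,l)\in T_i$, fullness from the construction, and stability via the recorded elements $c_{n,i,(k,l),m}$, $z_{n,i,(k,l),m}$) matches the paper and is sound once this conjugation step is inserted.
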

\begin{proof}
Let $x\in B_0\setminus J_0$. It suffices to show that $\overline{xJ_0x^*}=
\overline{xx^*J_0xx^*}$ contains a stable C$^*$-subalgebra which is full in $J_0$. 
For any $l\in\mathbb{N}$, 
there exist natural numbers $n(l)$ and $k(l)$ such that 
$$
\| b_{n(l),k(l)}-xx^* \| < \dfrac{1}{l}
$$ 
since $\{b_{n, k}\; |\; n, k\in\mathbb{N}\}$ is dense in $B_{0+}$. 
Note that we have 
$$
\|  (b_{n(l),k(l)}-1/l)_{+}-xx^*\| < \dfrac{2}{l}\to 0 \quad \text{as}\quad l\to\infty .
$$
Therefore there exists a natural number $l_0$ such that 
$(b_{n(l_0),k(l_0)}-1/l_0)_{+}\notin J$, that is, $(k(l_0), l_0)\in T_{n(l_0)}$ because 
$J$ is a closed ideal and $xx^*\notin J$. 
Note that $(k(l_0), l_0)\in T_{n(l_0)}$ implies that 
$\overline{(b_{n(l_0),k(l_0)}-1/l_0)_{+}J_0(b_{n(l_0),k(l_0)}-1/l_0)_{+}}$ is full in $J_0$. 
By the inequality above and \cite[Lemma 2.2]{KR2}, there exists a contraction $r$ in $B_0$ such that 
$r^*xx^*r=(b_{n(l_0),k(l_0)}-1/l_0)_{+}$. Put $y:= (xx^*)^{1/2}r$, and 
let $y=v|y|$ be the polar decomposition of $y$ in $B_0^{**}$. 
Note that if $b_1, b_2\in \overline{y^*B_0y}$, then $vb_1 \in B_0$ and $b_1v^*vb_2=b_1b_2$ by 
\cite[III. 5.2.16]{Bla}. Furthermore, it is easy to see that we have 
$$
v\overline{(b_{n(l_0),k(l_0)}-1/l_0)_{+}B_0(b_{n(l_0),k(l_0)}-1/l_0)_{+}}v^*=v\overline{y^*B_0y}v^*= 
\overline{yBy^*}\subset \overline{xx^*B_0xx^*}.
$$ 
Set 
$$
C:= v\overline{(b_{n(l_0),k(l_0)}-1/l_0)_{+}J_0(b_{n(l_0),k(l_0)}-1/l_0)_{+}}v^*\subset \overline{xx^*J_0xx^*}.
$$
Then it can be easily checked that $C$ is full in $J_0$. We shall show that 
$C$ is stable. 
Let $a\in C_{+}\setminus\{0\}$ and $\varepsilon>0$, and put 
$$
\varepsilon^{\prime}:= \min\left\{\varepsilon,\; \dfrac{\varepsilon}{2\| a\|},\; \sqrt{\dfrac{\varepsilon}{2}}
\right\}.
$$ 
Since $\{a_{n, m}\; |\; n, m\in\mathbb{N}, n \geq n(l_0)\}$ is dense in $J_{0+}$, there exist  
$n_0\geq n(l_0)$ and $m_0\in\mathbb{N}$ such that 
$$
\| a -v(b_{n(l_0),k(l_0)}-1/l_0)_{+}a_{n_0, m_0}(b_{n(l_0),k(l_0)}-1/l_0)_{+}v^*\| < \varepsilon^{\prime}\leq 
\varepsilon.
$$
Let $a^{\prime}:=(b_{n(l_0),k(l_0)}-1/l_0)_{+}a_{n_0, m_0}(b_{n(l_0),k(l_0)}-1/l_0)_{+}$ 
and $a^{\prime\prime}:=va^{\prime}v^*$. Note that $a^{\prime\prime}\in C$ and 
$\|a -a^{\prime\prime}\|< \varepsilon$. 
By construction of $B_0$ and $J_0$, there exist elements 
$$
c=c_{n_0,n(l_0),(k(l_0),l_0),m_0}\quad \text{and}\quad z=z_{n_0,n(l_0),(k(l_0),l_0),m_0} 
$$
in  $\overline{(b_{n(l_0),k(l_0)}-1/l_0)_{+}J_0(b_{n(l_0),k(l_0)}-1/l_0)_{+}}$
such that 
$$
a^{\prime}c=0, \quad zz^*=c \quad \text{and} \quad z^*z
=a^{\prime}.
$$
Set $z^{\prime}:= vzv^*$ and $c^{\prime}:=vcv^*$, then $z^{\prime}$ and $c^{\prime}$ are elements in 
$C$. 
Since we have 
$$
z^{\prime*}z^{\prime}=vz^*v^*vzv^*=vz^*zv^*=va^{\prime}v^*=a^{\prime\prime},
$$
$$
z^{\prime}z^{\prime*}=vzv^{*}vz^*v=vzz^*v^*=vcv^*=c^{\prime}
$$
and
$$
a^{\prime\prime}c^{\prime}=va^{\prime}v^*vcv^*=va^{\prime}cv^*=0,
$$
$a^{\prime\prime}$ is Murray-von Neumann equivalent to $c^{\prime}$ in $C$ and 
$a^{\prime\prime}c^{\prime}$=0. 
Furthermore, 
\begin{align*}
\| ac^{\prime}\| =\| ac^{\prime}-a^{\prime\prime}c^{\prime}\|\leq  \|a-a^{\prime\prime}\| \|c^{\prime}\| 
< \varepsilon^{\prime} \| a^{\prime\prime}\|
< \varepsilon^{\prime}(\|a\| +\varepsilon^{\prime})\leq \varepsilon.
\end{align*}
Hence $C$ is stable by \cite[Theorem 2.2]{Ror2} (see also \cite{HR}). 
Consequently, we obtain the conclusion. 
\end{proof}

Using the lemma above, the same proof as \cite[Lemma 5.2]{Na4} shows the following theorem.

\begin{thm}
Let $B$ be a simple separable non-type I nuclear monotracial C$^*$-algebra with strict comparison, 
and let $\beta$ be an outer action of a countable discrete amenable group 
$\Gamma$ on $B$. Then there exists a homomorphism $\Phi$ from $\mathcal{W}$ to 
$(B^{\omega}\cap B^{\prime})^{\beta}$ such that $\tau_{B, \omega}\circ \Phi=\tau_{\mathcal{W}}$. 
\end{thm}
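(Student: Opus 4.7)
The plan is to follow the strategy of \cite[Lemma 5.2]{Na4}, which in turn adapts Schafhauser's proof \cite{Sc} of the Tikuisis-White-Winter theorem \cite{TWW}. All the essential ingredients are now in place: the trace-preserving homomorphism $\Pi : \mathcal{W} \to M_{\omega}^{\tilde{\beta}}$ (which exists because $M_{\omega}^{\tilde{\beta}}$ is a II$_1$ factor by Proposition \ref{pro:MS-lemma-4.1}), the separable extension $\eta_0 : 0 \to J_0 \to B_0 \to M_0 \to 0$ with $\Pi(\mathcal{W}) \subset M_0$, the fact that $\eta_0$ is purely large (just established in the previous lemma), and the stability of $J_0$.

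First I would form the pullback extension $\Pi^* \eta_0 : 0 \to J_0 \to E_0 \to \mathcal{W} \to 0$, where $E_0 = \{(a,x) \in \mathcal{W} \oplus B_0 \mid \Pi(a) = \varrho(x)\}$. Pulling a purely large extension back along an arbitrary homomorphism yields a purely large extension, so $\Pi^* \eta_0$ is purely large. By the Elliott-Kucerovsky absorption theorem \cite{EllK} (applied after unitization, since $\mathcal{W}$ is nonunital), the extension $\Pi^* \eta_0$ is nuclearly absorbing, and in particular its class in $\mathrm{Ext}(\mathcal{W}, J_0)$ determines it up to unitary equivalence.

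Next I would invoke the hypothesis that $\mathcal{W}$ is $KK$-equivalent to $\{0\}$, which forces $KK^1(\mathcal{W}, J_0) = 0$ and hence the triviality of every semisplit extension of $\mathcal{W}$ by $J_0$ in $\mathrm{Ext}(\mathcal{W}, J_0)$. Combined with the absorption above, this upgrades $\Pi^* \eta_0$ from weakly trivial to genuinely split, producing a $*$-homomorphism $\sigma : \mathcal{W} \to E_0$ that splits the canonical surjection. Composing $\sigma$ with the canonical map $E_0 \to B_0 \hookrightarrow (B^{\omega} \cap B^{\prime})^{\beta}$ yields a homomorphism $\Phi : \mathcal{W} \to (B^{\omega} \cap B^{\prime})^{\beta}$ satisfying $\varrho \circ \Phi = \Pi$.

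Tracial compatibility is then automatic: since $\tau_{B, \omega} = \tilde{\tau} \circ \varrho|_{(B^{\omega} \cap B^{\prime})^{\beta}}$ (where $\tilde{\tau}$ denotes the unique tracial state on $M_{\omega}^{\tilde{\beta}}$) and $\Pi$ is trace preserving by construction, we get $\tau_{B, \omega} \circ \Phi = \tilde{\tau} \circ \varrho \circ \Phi = \tilde{\tau} \circ \Pi = \tau_{\mathcal{W}}$. I expect the main technical obstacle to be the careful bookkeeping required to apply the Elliott-Kucerovsky theorem in the nonunital setting, together with the passage from triviality in $\mathrm{Ext}$ to an honest splitting rather than merely an approximate one; both issues are handled in \cite[Lemma 5.2]{Na4} and rely essentially on separability of $\eta_0$ (so that $KK$-theoretic absorption is applicable).
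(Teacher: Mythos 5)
Your proposal is correct and follows essentially the same route as the paper, which proves this theorem by invoking the preceding lemma (purely largeness of the separable extension $\eta_0$) and running the proof of \cite[Lemma 5.2]{Na4}: pull back $\eta_0$ along $\Pi$, deduce that the pullback is purely large, apply the Elliott--Kucerovsky/Gabe absorption theorem together with $KK(\mathcal{W},\,\cdot\,)=0$ and stability of $J_0$ to split the extension, and compose the splitting with $E_0\to B_0\subset (B^{\omega}\cap B^{\prime})^{\beta}$. The only nuance is that purely largeness of the pullback requires injectivity of $\Pi$ rather than holding for an arbitrary homomorphism, but this is automatic here since $\mathcal{W}$ is simple and $\Pi$ is nonzero (trace preserving).
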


The following corollary is the main theorem in this section. 

\begin{cor}
Let $A$ be a simple separable nuclear monotracial C$^*$-algebra and $B$ 
a simple separable non-type I nuclear monotracial C$^*$-algebra with strict comparison 
and $B\subset\overline{\mathrm{GL}(B^{\sim})}$, 
and let $\alpha$ be a strongly outer action of a countable discrete amenable group 
$\Gamma$ on $A$ and $\beta$ an outer action of $\Gamma$ on $B$. 
Assume that $A\rtimes_{\alpha}\Gamma$ is $\mathcal{W}$-embeddable. 
Then there exists a trace preserving sequential asymptotic cocycle morphism from $(A, \alpha)$ 
to $(B, \beta)$. 
\end{cor}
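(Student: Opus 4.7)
The plan is to combine the three pieces assembled in this section into a single composition. First I note that Theorem~7.6 (the unlabelled theorem just before this corollary) produces a trace preserving homomorphism $\Phi_0 \colon \mathcal{W} \to (B^\omega \cap B')^\beta$ with $\tau_{B,\omega}\circ \Phi_0 = \tau_{\mathcal{W}}$, using only that $B$ is a simple separable non-type I nuclear monotracial C$^*$-algebra with strict comparison and that $\beta$ is outer. Since $(B^\omega \cap B')^\beta \subseteq (B^\omega)^\beta$, this $\Phi_0$ in particular lands in the fixed point algebra $(B^\omega)^\beta$.

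Next, the hypothesis that $A\rtimes_\alpha \Gamma$ is $\mathcal{W}$-embeddable, combined with \cite[Lemma 6.3]{Na4} (cited at the start of Section~\ref{sec:exist}), gives a trace preserving homomorphism $\rho \colon A\rtimes_\alpha \Gamma \to \mathcal{W}$, that is, $\tau_{\mathcal{W}}\circ \rho = \tau_{A\rtimes_\alpha\Gamma} = \tau_A \circ E_\alpha$, where $E_\alpha$ is the canonical conditional expectation from $A\rtimes_\alpha\Gamma$ onto $A$ (recall that strong outerness of $\alpha$ together with simplicity and monotraciality of $A$ ensures $A\rtimes_\alpha\Gamma$ is simple and monotracial, so its unique tracial state is indeed $\tau_A\circ E_\alpha$).

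Now I form the composition $\Phi := \Phi_0 \circ \rho \colon A\rtimes_\alpha \Gamma \to (B^\omega)^\beta$. It is a homomorphism, and
\[
\tau_{B,\omega}\circ \Phi = \tau_{B,\omega}\circ \Phi_0 \circ \rho = \tau_{\mathcal{W}}\circ \rho = \tau_A \circ E_\alpha ,
\]
so $\Phi$ satisfies exactly the hypothesis of Lemma~\ref{lem:exist}. Invoking that lemma, whose proof uses crucially $B\subseteq \overline{\mathrm{GL}(B^\sim)}$ to replace the approximately invertible elements $V_g$ by genuine unitaries $U_g$ in $(B^\sim)^\omega$, produces a trace preserving sequential asymptotic cocycle morphism from $(A,\alpha)$ to $(B,\beta)$ given by $(\Phi|_A, U)$.

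All the real work has already been done: constructing $\Phi_0$ via Schafhauser's pullback/purely large extension argument adapted to the fixed point algebra is the hard step, and it was carried out earlier in this section using Proposition~\ref{pro:MS-lemma-4.1}, Proposition~\ref{pro:strict-comparison-7}, and the Blackadar-type reindexing culminating in $\eta_0$. Consequently this corollary is essentially a routine assembly: the only subtleties to check are that the two trace identifications match up under composition and that $\Phi$ does take values in $(B^\omega)^\beta$ (immediate since $\Phi_0$ lands in $(B^\omega \cap B')^\beta \subseteq (B^\omega)^\beta$), after which Lemma~\ref{lem:exist} finishes the argument.
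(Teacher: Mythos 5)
Your proposal is correct and follows exactly the paper's own argument: compose the trace preserving embedding $A\rtimes_{\alpha}\Gamma\to\mathcal{W}$ obtained from $\mathcal{W}$-embeddability via \cite[Lemma 6.3]{Na4} with the trace preserving homomorphism $\mathcal{W}\to(B^{\omega}\cap B^{\prime})^{\beta}$ from the preceding theorem, and then invoke Lemma \ref{lem:exist}. The trace bookkeeping and the observation that the image lies in $(B^{\omega})^{\beta}$ are handled the same way in the paper.
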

\begin{proof}
Note that $A\rtimes_{\alpha}\Gamma$ is monotracial because $\alpha$ is strongly outer. 
By \cite[Lemma 6.3]{Na4}, 
there exists a trace preserving homomorphism from  $A\rtimes_{\alpha}\Gamma$ to $\mathcal{W}$ 
since $A\rtimes_{\alpha}\Gamma$ is $\mathcal{W}$-embeddable. 
Hence the theorem above implies that there exists a homomorphisms $\Phi$ from 
$A\rtimes_{\alpha}\Gamma$ to $(B^{\omega}\cap B^{\prime})^{\beta}\subset (B^{\omega})^{\beta}$ 
such that $\tau_{A\rtimes_{\alpha}\Gamma}=\tau_{B, \omega}\circ \Phi$. 
Therefore we obtain the conclusion by Lemma \ref{lem:exist}. 
\end{proof}

The following corollary is an immediate consequence of the corollary above. 
Note that for any unitary element $U$ in $(B^{\sim})^{\omega}$, there exists a sequence 
$\{u_n\}_{n\in\mathbb{N}}$ of unitary elements in $B^{\sim}$ such that 
$U=(u_n)_n$ in $(B^{\sim})^{\omega}$ because we assume $B\subset 
\overline{\mathrm{GL}(B^{\sim})}$. 

\begin{cor}\label{cor:main-section7}
Let $A$ be a simple separable nuclear monotracial C$^*$-algebra and $B$ 
a simple separable non-type I nuclear monotracial C$^*$-algebra with strict comparison 
and $B\subset\overline{\mathrm{GL}(B^{\sim})}$, 
and let $\alpha$ be a strongly outer action of a countable discrete amenable group 
$\Gamma$ on $A$ and $\beta$ an outer action of $\Gamma$ on $B$. 
Assume that $A\rtimes_{\alpha}\Gamma$ is $\mathcal{W}$-embeddable. 
For any finite subsets $F\subset A$, $\Gamma_0\subset \Gamma$ and $\varepsilon>0$, 
there exists a proper $(\Gamma_0, F, \varepsilon)$-approximate cocycle morphism $(\varphi, u)$ 
from $(A, \alpha)$ to $(B, \beta)$ such that 
$$
| \tau_{B} (\varphi (a)) -\tau_{A} (a) | < \varepsilon 
$$
for any $a\in F$. 
\end{cor}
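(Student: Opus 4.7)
The plan is to unpack the preceding corollary into concrete finite-stage data. First I would apply that corollary to obtain a trace preserving sequential asymptotic cocycle morphism $(\Psi, U)$ from $(A, \alpha)$ to $(B, \beta)$: a homomorphism $\Psi : A \to B^{\omega}$ together with unitaries $U_g \in (B^{\sim})^{\omega}$ satisfying $\Psi \circ \alpha_g = \mathrm{Ad}(U_g) \circ \beta_g \circ \Psi$, $\Psi(a)U_{gh} = \Psi(a)U_g\beta_g(U_h)$, and $\tau_{B,\omega}\circ \Psi = \tau_A$.

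Next I would lift $\Psi$ via the Choi--Effros lifting theorem to a sequence $\{\psi_n\}_{n\in\mathbb{N}}$ of c.c.p.\ maps from $A$ to $B$, and, using the hypothesis $B\subset \overline{\mathrm{GL}(B^{\sim})}$ (exactly as in the remark preceding the statement), lift each $U_g$ to a sequence $(u_{g,n})_n$ of genuine unitaries in $B^{\sim}$; I would set $u_{\iota,n}:=1_{B^{\sim}}$ so that the normalization clause in the definition of an approximate $\beta$-cocycle is automatic.

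The defining identities of $(\Psi, U)$ translate into the statement that, along the ultrafilter $\omega$, the quantities
\[
\|\psi_n(ab)-\psi_n(a)\psi_n(b)\|, \quad \|\psi_n(\alpha_g(a))-u_{g,n}\beta_g(\psi_n(a))u_{g,n}^*\|,
\]
\[
\|\psi_n(a)(u_{gh,n}-u_{g,n}\beta_g(u_{h,n}))\|, \quad |\tau_B(\psi_n(a))-\tau_A(a)|
\]
tend to $0$ for every fixed $a,b\in A$ and $g,h\in \Gamma$. Since $F$ and $\Gamma_0$ are finite, the set of indices $n$ for which all these quantities are simultaneously less than $\varepsilon$ on the relevant finite data (including on $\psi_n(F)$, which governs the ``$(\Gamma_0,\varphi(F),\varepsilon)$-approximate cocycle'' clause of the definition) lies in $\omega$, hence is nonempty. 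Choosing any such $n$ and setting $\varphi:=\psi_n$, $u_g:=u_{g,n}$ produces the required proper $(\Gamma_0,F,\varepsilon)$-approximate cocycle morphism with the trace estimate.

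There is no real obstacle here: the corollary is essentially a finitary repackaging of the preceding one, in a form convenient for feeding into the approximate cocycle intertwining argument of Section~\ref{sec:sza}. The only subtle point is the role of the hypothesis $B\subset \overline{\mathrm{GL}(B^{\sim})}$, which is precisely what ensures that the ultrapower-level unitaries $U_g$ admit lifts to genuine unitaries of $B^{\sim}$, rather than only to almost-unitary contractions, so that $(\varphi,u)$ truly fits the formal definition of a proper approximate cocycle morphism.
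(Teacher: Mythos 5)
Your proposal is correct and follows exactly the route the paper intends: it treats the corollary as the finitary unpacking of the preceding existence corollary, lifting $\Psi$ to c.c.p.\ maps and the $U_g$ to genuine unitaries of $B^{\sim}$ via $B\subset\overline{\mathrm{GL}(B^{\sim})}$, then choosing a single index along $\omega$ where all finitely many estimates hold. The paper itself gives no more than this (it calls the corollary an immediate consequence and records precisely the unitary-lifting remark you highlight), so there is nothing to add.
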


\section{Characterization and cocycle conjugacy}\label{sec:main}

In this section we shall show the main results in this paper. 

Recall that 
for any countable discrete group $\Gamma$,  $\mu^{\Gamma}$ is the Bernoulli shift action  
of $\Gamma$ on $\bigotimes_{g\in \Gamma}M_{2^{\infty}}\cong M_{2^{\infty}}$. 
It is known that $\mu^{\Gamma}$ is strongly outer (see, for example \cite[Lemma 2.5]{VV}). 
Hence if $\Gamma$ is amenable, then 
$M_{2^{\infty}}\rtimes_{\mu^{\Gamma}}\Gamma$ is a simple separable nuclear 
monotracial C$^*$-algebra by \cite[Theorem 3.1]{K} and \cite[Proposition 2.1]{Na0}. 
Therefore $(M_{2^{\infty}}\otimes\mathcal{W})
\rtimes_{\mu^{\Gamma}\otimes\mathrm{id}_{\mathcal{W}}}\Gamma$ 
is isomorphic to $\mathcal{W}$ by \cite{CE} and \cite{EGLN} (or \cite[Corollary 6.2]{Na4}) since 
$(M_{2^{\infty}}\otimes\mathcal{W})\rtimes_{\mu^{\Gamma}\otimes\mathrm{id}_{\mathcal{W}}}
\Gamma \cong(M_{2^{\infty}}\rtimes_{\mu^{\Gamma}}\Gamma)\otimes\mathcal{W}$. 
The following theorem is one of the main results in this paper. 

\begin{thm}\label{thm:main2}
Let $\gamma$ be a strongly outer action of a countable discrete amenable group $\Gamma$ on 
$\mathcal{W}$. Then $\gamma$ is cocycle conjugate to 
$\mu^{\Gamma}\otimes\mathrm{id}_{\mathcal{W}}$ on $M_{2^{\infty}}\otimes \mathcal{W}$ 
if and only if $(\mathcal{W}, \gamma)$ satisfies the following properties: \ \\
(i) for any $\theta\in [0,1]$, there exists a projection $p$ in 
$F(\mathcal{W})^{\gamma}$ such that $\tau_{\mathcal{W}, \omega}(p)=\theta$, \ \\
(ii) if $p$ and $q$ are projections in $F(\mathcal{W})^{\gamma}$ 
such that $0<\tau_{\mathcal{W}, \omega}(p)=\tau_{\mathcal{W}, \omega}(q)$, 
then  $p$ is Murray-von Neumann equivalent to $q$, \ \\
(iii) $\mathcal{W}\rtimes_{\gamma}\Gamma$ is $\mathcal{W}$-embeddable.  
\end{thm}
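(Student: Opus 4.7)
The proof divides into the two implications, and the interesting work is in the sufficiency direction.

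For necessity, I would argue as follows. If $\gamma$ is cocycle conjugate to $\mu^{\Gamma}\otimes\mathrm{id}_{\mathcal{W}}$, then $\mathcal{W}\rtimes_{\gamma}\Gamma \cong (M_{2^{\infty}}\otimes\mathcal{W})\rtimes_{\mu^{\Gamma}\otimes\mathrm{id}_{\mathcal{W}}}\Gamma \cong (M_{2^{\infty}}\rtimes_{\mu^{\Gamma}}\Gamma)\otimes\mathcal{W}$, which equals $\mathcal{W}$ by \cite{CE}, \cite{EGLN}; this gives (iii). As noted in Section \ref{sec:pre}, cocycle conjugacy induces an isomorphism $F(\mathcal{W})^{\gamma}\cong F(M_{2^{\infty}}\otimes\mathcal{W})^{\mu^{\Gamma}\otimes\mathrm{id}_{\mathcal{W}}}$. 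Applying Proposition \ref{pro:equivariant-w}(ii) and Corollary \ref{cor:main-section4} with $A = M_{2^{\infty}}$ and $\alpha = \mu^{\Gamma}$ (which is strongly outer, hence in particular outer) yields (i) and (ii).

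For sufficiency I would apply Szab\'o's approximate cocycle intertwining argument (Proposition \ref{pro:intertwining}) to $(A,\alpha) = (\mathcal{W},\gamma)$ and $(B,\beta) = (M_{2^{\infty}}\otimes\mathcal{W},\mu^{\Gamma}\otimes\mathrm{id}_{\mathcal{W}})$. Both target algebras are simple separable non-type-I nuclear monotracial with strict comparison, and have stable rank one so are contained in the closures of the invertible groups of their unitizations. Both crossed products are $\mathcal{W}$-embeddable: the left-hand one by hypothesis (iii), the right-hand one because it is already isomorphic to $\mathcal{W}$. Finally $\mu^{\Gamma}\otimes\mathrm{id}_{\mathcal{W}}$ is strongly outer, and by the necessity direction already proved, its fixed-point subalgebra of $F(M_{2^{\infty}}\otimes\mathcal{W})$ satisfies (i) and (ii). Hence the existence theorem (Corollary \ref{cor:main-section7}) and the uniqueness theorem (Corollary \ref{cor:main-section6}, applied to $(\mathcal{W},\gamma)$ directly and to $(M_{2^{\infty}}\otimes\mathcal{W},\mu^{\Gamma}\otimes\mathrm{id}_{\mathcal{W}})$ via the isomorphism $M_{2^{\infty}}\otimes\mathcal{W}\cong\mathcal{W}$) are both available in each direction.

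I would then build the data $(\varphi_n,u_n)$ and $(\psi_n,v_n)$ by induction on $n$, interlacing increasing finite subsets $F_n\subset\mathcal{W}$, $G_n\subset M_{2^{\infty}}\otimes\mathcal{W}$, $\Gamma_n\subset\Gamma$ and tolerances $\varepsilon_n = 2^{-n}$. At each stage I would first invoke Corollary \ref{cor:main-section7} to produce a proper approximate cocycle morphism in the appropriate direction with approximately trace-preserving underlying map, then form the composition $\psi_n\circ\varphi_n$ (respectively $\varphi_{n+1}\circ\psi_n$), which is a proper approximate \emph{quasi} cocycle endomorphism with composed quasi-cocycle $\psi_n^{\sim}(u_g^{(n)})v_g^{(n)}$ (respectively $\varphi_{n+1}^{\sim}(v_g^{(n)})u_g^{(n+1)}$). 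Then I would apply the uniqueness theorem to obtain a unitary $w$ in the unitization satisfying both $\psi_n\circ\varphi_n(a)\approx waw^*$ and $\psi_n\circ\varphi_n(a)(\psi_n^{\sim}(u_g^{(n)})v_g^{(n)} - w\gamma_g(w^*))\approx 0$, and replace the newly constructed datum by its conjugate under $\mathrm{Ad}(w^*)$ (together with the corresponding adjustment of the cocycle). This arranges conditions (iii)--(vi) of Proposition \ref{pro:intertwining}; the remaining conditions (vii)--(xii) are handled by enlarging the finite sets at each stage to include the images, the group translates, and the products with the cocycle elements produced so far.

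The main technical obstacle lies in choosing the tolerances in the right order. The uniqueness theorem, Corollary \ref{cor:main-section6}, demands an \emph{a priori} smaller tolerance $\delta$ depending on the tolerance $\varepsilon$ one wants to achieve \emph{and} on the finite subset on which one wants control. Since the finite subsets $F_n$, $G_n$ and $\Gamma_n$ grow with the induction and must at stage $n$ contain the data (including cocycle unitaries) produced at stage $n-1$, one must feed the existence theorem at stage $n$ tolerance parameters that are already small enough for the uniqueness step at stage $n$, yet still summable across $n$. The clause of Corollary \ref{cor:main-section6} giving the cocycle adjustment $\varphi(a)(u_g - w\gamma_g(w^*))\approx 0$ is essential for conditions (v) and (vi); without it one would only obtain approximate unitary equivalence of the underlying maps, not of the quasi-cocycle morphisms, and Proposition \ref{pro:intertwining} would fail to yield a $\beta$-cocycle in the limit. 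Threading these tolerances carefully through the induction is the delicate but routine part of the argument.
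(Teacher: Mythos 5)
Your proposal follows essentially the same route as the paper: necessity via the induced isomorphism $F(\mathcal{W})^{\gamma}\cong F(M_{2^{\infty}}\otimes\mathcal{W})^{\mu^{\Gamma}\otimes\mathrm{id}_{\mathcal{W}}}$ together with Proposition \ref{pro:equivariant-w} and Corollary \ref{cor:main-section4}, and sufficiency by feeding Corollary \ref{cor:main-section7} (existence) and Corollary \ref{cor:main-section6} (uniqueness, including the cocycle-adjustment clause) into Proposition \ref{pro:intertwining}, correcting each composed quasi cocycle endomorphism by $\mathrm{Ad}(w^*)$. You also correctly identify the one delicate point the paper handles explicitly, namely that the uniqueness tolerance $\delta$ must be fixed before the existence step at each stage so that the $\varepsilon_n$ remain summable.
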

\begin{proof}
First, we shall show the only if part. 
Since $\gamma$ is cocycle conjugate to 
$\mu^{\Gamma}\otimes\mathrm{id}_{\mathcal{W}}$, 
$\mathcal{W}\rtimes_{\gamma}\Gamma$ is isomorphic to 
$(M_{2^{\infty}}\otimes\mathcal{W})\rtimes_{\mu^{\Gamma}\otimes\mathrm{id}_{\mathcal{W}}}\Gamma
\cong \mathcal{W}$. 
We see that $F(\mathcal{W})^{\gamma}$ is isomorphic to 
$F(M_{2^{\infty}}\otimes\mathcal{W})^{\mu^{\Gamma}\otimes\mathrm{id}_{\mathcal{W}}}$ because 
$\gamma$ on $F(\mathcal{W})$ is conjugate to $\mu^{\Gamma}\otimes\mathrm{id}_{\mathcal{W}}$ 
on $F(M_{2^{\infty}}\otimes\mathcal{W})$. 
Therefore $F(\mathcal{W})^{\gamma}$ satisfies the properties (i) and (ii) 
by Proposition \ref{pro:equivariant-w} and Corollary \ref{cor:main-section4}. 
Consequently, the proof of the only if part is complete. 

We shall show the if part. 
As above, $F(M_{2^{\infty}}\otimes\mathcal{W})^{\mu^{\Gamma}\otimes\mathrm{id}_{\mathcal{W}}}$ 
also satisfies the properties (i) and (ii). 
Therefore $(\mathcal{W}, \gamma)$ and $(M_{2^{\infty}}\otimes\mathcal{W}, 
\mu^{\Gamma}\otimes\mathrm{id}_{\mathcal{W}})$ satisfy the assumption in 
Corollary \ref{cor:main-section6} (uniqueness) and Corollary \ref{cor:main-section7} (existence). 
For notational simplicity, put $(A, \alpha):= (\mathcal{W}, \gamma)$ and 
$(B, \beta):=(M_{2^{\infty}}\otimes\mathcal{W}, \mu^{\Gamma}\otimes\mathrm{id}_{\mathcal{W}})$.  
We will use Proposition \ref{pro:intertwining} (the approximate intertwining) to obtain the conclusion. 

Let $\{x_n\; |\; n\in\mathbb{N}\}$ be a dense subset of $A$ and 
$\{y_n\; |\; n\in\mathbb{N}\}$ a dense subset of $B$. 
Since $\Gamma$ is countable, there exists a increasing sequence of inverse closed finite subsets 
$\{\Gamma_n\}_{n\in\mathbb{N}}$ of $\Gamma$ such that 
$\bigcup_{n=1}^{\infty}\Gamma_n=\Gamma$. 
For any $n\in\mathbb{N}$, 
let $\varepsilon_n:=1/2^n$. 
Note that $\iota \in\Gamma_1$ and $\sum_{n=1}^\infty \varepsilon_n < \infty$. 
Applying Corollary \ref{cor:main-section6} to $F_1:=\{x_1, x_1^*\}$, $\Gamma_1$ 
and $\varepsilon_1$, 
we obtain $F_1^{\prime}\subset A$, $\Gamma_1^{\prime}\subset \Gamma$ and $\delta_1>0$. 
We may assume $\delta_1<\varepsilon_1$. 
Put 
$$
K_1:= \{gh\; |\; g,h\in\Gamma_1 
\cup \Gamma_1^{\prime}\} \quad \text{and} \quad F_1^{\prime\prime}:= F_1\cup F_1^{\prime}. 
\eqno{(8.1.1)}
$$
By Corollary \ref{cor:main-section7}, 
there exists a proper approximate 
cocycle morphism $(\varphi_1, u^{(1)})$ from $(A, \alpha)$ to $(B, \beta)$ such that 
$$
(\varphi_1, u^{(1)}) \text{ is  a proper } (K_1, F_1^{\prime\prime}, \delta_1/2) 
\text{-approximate cocycle morphism} \eqno{(8.1.2)}
$$
and 
$$
| \tau_{B}(\varphi_1(a)) -\tau_{A}(a)| < \frac{\delta_1}{2} \eqno{\mathrm{(8.1.3)}}
$$
for any $a\in F_1^{\prime\prime}$. 
Applying Corollary \ref{cor:main-section6} to $G_1:=\{y_1, y_1^*\}$, $\Gamma_1$ 
and $\varepsilon_1$, 
we obtain $G_1^{\prime}\subset B$, $\Gamma_1^{\prime\prime}\subset \Gamma$ and 
$\delta_2>0$. 
We may assume  
$$
\delta_2 < \frac{\delta_1}{4+\displaystyle{\max_{a\in F_1^{\prime}}\|a\|}}. \eqno{(8.1.4)}
$$
Note that we have $\delta_2<\frac{\delta_1}{4}<\delta_1<\varepsilon_1$. 
Put 
$$
K_2:=\{gh \;|\; g, h\in K_1\cup\Gamma_1^{\prime\prime}\},
$$ 
and choose a finite subset $G_1^{\prime\prime}$ of $B$ such that 
$$
G_1\cup G_1^{\prime}\cup \beta_g(\varphi_1(F_1^{\prime\prime}))u_{g}^{(1)} \subset G_1^{\prime\prime}
\;
\text{and}
\;
\beta_g(u_h^{(1)})\in \{b+\lambda 1_{B^{\sim}}\; |\; b\in G_1^{\prime\prime}, \lambda\in\mathbb{C}\}
\eqno{(8.1.5)}
$$
for any $g,h\in K_2$. Note that we have $\iota\in K_2$, $u_{\iota}=1_{B^{\sim}}$ and 
$\beta_{\iota}=\mathrm{id}_{B}$. 
By Corollary \ref{cor:main-section7}, there exists a proper approximate 
cocycle morphism $(\psi_1, v^{(1)})$ from $(B, \beta)$ to $(A, \alpha)$ such that 
$$
(\psi_1, v^{(1)}) \text{ is  a proper } (K_2, G_1^{\prime\prime}, \delta_2/2)
\text{-approximate cocycle morphism} \eqno{(8.1.6)}
$$
and 
$$
| \tau_{A}(\psi_1(b))- \tau_{B}(b) | < \frac{\delta_2}{2} \eqno{(8.1.7)}
$$ 
for any $b\in G_1^{\prime\prime}$. 
By (8.1.1), (8.1.3),  (8.1.5), (8.1.6) and (8.1.7), 
$\psi_1\circ \varphi_1$ is an $(F_1^{\prime}, \delta_1)$-multiplicative map 
such that 
$$
| \tau_{A}(\psi_1\circ \varphi_1(a))-\tau_{A}(a) | < \delta_1
$$
for any $a\in F_1^{\prime}$. For any $g\in\Gamma$, set 
$$
z_g^{(1)}:= \psi_1^{\sim}(u_g^{(1)})v_{g}^{(1)} \in A^{\sim}.
$$  
Then for any $a\in F_1^{\prime}\subset F_1^{\prime\prime}$ and 
$g, h\in \Gamma_1^{\prime}\subset K_1\subset K_2$, we have 
\begin{align*}
 \| \psi_1\circ \varphi_1(a)(z^{(1)}_{gh} &-z^{(1)}_g\alpha_g(z_h^{(1)}))\| \\
&= \| \psi_1(\varphi_1(a))\psi_1^{\sim}(u_{gh}^{(1)})v_{gh}^{(1)} 
-\psi_1(\varphi_1(a))z^{(1)}_g\alpha_g(z_h^{(1)})\| \\
(8.1.5), (8.1.6)\; 
& <\| \psi_1(\varphi_1(a)u_{gh}^{(1)})v_{gh}^{(1)}-\psi_1(\varphi_1(a)) 
z^{(1)}_g\alpha_g(z_h^{(1)})\| + \frac{\delta_2}{2} \\
(8.1.5), (8.1.6)\;
& < \|\psi_1(\varphi_1(a)u_{gh}^{(1)})v_{g}^{(1)}\alpha_g(v_{h}^{(1)})- \psi_1(\varphi_1(a)) 
z^{(1)}_g\alpha_g(z_h^{(1)})\| +\delta_2 \\
(8.1.2)\; 
& <\| \psi_1(\varphi_1(a)u_{g}^{(1)}\beta_g(u_{h}^{(1)}))v_{g}^{(1)}\alpha_g(v_{h}^{(1)}) 
- \psi_1(\varphi_1(a)) 
z^{(1)}_g\alpha_g(z_h^{(1)})\| \\ 
&\quad +\delta_2+\frac{\delta_1}{2} \\
(8.1.5), (8.1.6)\;
& <\| \psi_1(\varphi_1(a)u_{g}^{(1)})\psi_1^{\sim}(\beta_g(u_{h}^{(1)}))v_{g}^{(1)}\alpha_g(v_{h}^{(1)})  
\\
&\quad -\psi_1(\varphi_1(a))z^{(1)}_g\alpha_g(z_h^{(1)}) \| 
 +\frac{3\delta_2}{2}+\frac{\delta_1}{2} \\
(8.1.5), (8.1.6)\;
& <\| \psi_1(\varphi_1(a)u_{g}^{(1)})v_{g}^{(1)}\alpha_g(\psi_1^{\sim}(u_{h}^{(1)}))v_{g}^{(1)*}
v_{g}^{(1)}\alpha_g(v_{h}^{(1)}) \\
&\quad - \psi_1(\varphi_1(a)) z^{(1)}_g\alpha_g(z_h^{(1)})\| 
 +\frac{(3+\| a\|)\delta_2}{2}+\frac{\delta_1}{2} \\ 
& = \| (\psi_1(\varphi_1(a)u_{g}^{(1)})v_{g}^{(1)}\alpha_g(z_h^{(1)})
 - \psi_1(\varphi_1(a)) z^{(1)}_g\alpha_g(z_h^{(1)})\| \\
& \quad +\frac{(3+\| a\|)\delta_2}{2}+\frac{\delta_1}{2} \\ 
(8.1.5), (8.1.6)\;
& < \frac{(4+\| a\|)\delta_2}{2}+\frac{\delta_1}{2}  \\
(8.1.4)\;
& < \delta_1,
\end{align*}
\begin{align*}
\|z_g^{(1)*}z_g^{(1)}-1 \| 
&=\| v_{g}^{(1)*}\psi_1^{\sim}(u_g^{(1)*})\psi_1^{\sim}(u_g^{(1)})v_{g}^{(1)}-1\|
=\|\psi_1^{\sim}(u_g^{(1)*})\psi_1^{\sim}(u_g^{(1)})-1\| \\
(8.1.5), (8.1.6)\;
&<\frac{\delta_2}{2}<\delta_1,
\end{align*}
\begin{align*}
\|z_g^{(1)}z_g^{(1)*}-1 \| 
&=\|\psi_1^{\sim}(u_g^{(1)})\psi_1^{\sim}(u_g^{(1)*})-1\| \\
(8.1.5), (8.1.6)\;
&<\frac{\delta_2}{2}<\delta_1
\end{align*}
and 
\begin{align*}
&\| (\psi_1\circ \varphi_1)\circ \alpha_g(a)- \mathrm{Ad}(z_g^{(1)})\circ 
\alpha_g\circ (\psi_1\circ \varphi_1)(a)\| \\
&= \| \psi_1(\varphi_1(\alpha_g(a)))-\psi_1^{\sim}(u_g^{(1)})v_{g}^{(1)}\alpha_g(\psi_1(\varphi_1(a)))
v_g^{(1)*}\psi_1^{\sim}(u_g^{(1)*})\| \\
(8.1.5), (8.1.6)\;
&< \| \psi_1(\varphi_1(\alpha_g(a)))-\psi_1^{\sim}(u_g^{(1)})\psi_1(\beta_g(\varphi_1(a)))
\psi_1^{\sim}(u_g^{(1)*})\| +\frac{\delta_2}{2} \\
(8.1.5), (8.1.6)\;
&< \| \psi_1(\varphi_1(\alpha_g(a)))-\psi_1(u_g^{(1)}\beta_g(\varphi_1(a))u_g^{(1)*})\| +\frac{3\delta_2}{2}
\\
(8.1.2)\; 
& <\frac{\delta_1}{2}+\frac{3\delta_2}{2}<\delta_1.
\end{align*}
Therefore $(\psi_1\circ \varphi_1, z^{(1)})$ is a proper $(\Gamma_1^{\prime}, F_1^{\prime}, \delta_1)$-
approximate quasi cocycle morphism from $(A, \alpha)$ to $(A, \alpha)$. 
Hence  Corollary \ref{cor:main-section6} implies that there exists a unitary element $w_1$ in 
$A^{\sim}$ such that 
$$
\| \psi_1\circ \varphi_1(a)- w_1aw_1^* \| < \varepsilon_1
\quad \text{and} \quad  
\| \psi_1\circ \varphi_1(a)(z_g^{(1)}- w_1\alpha_g(w_1^*))\| < \varepsilon_1
$$
for any $a\in F_1$ and $g\in \Gamma_1$. Put $(\varphi_1^{\prime}, \tilde{u}^{(1)})
:=(\varphi_1, u^{(1)})$ and 
$$ 
\psi_1^{\prime}:= \mathrm{Ad}(w_1^*)\circ \psi_1 \quad \text{and}
\quad \tilde{v}_g^{(1)}:= w_1^*v_{g}^{(1)}\alpha_g(w_1)
$$
for any $g\in\Gamma$. It is easy to see that $(\psi_1^{\prime}, \tilde{v}^{(1)})$ is 
a proper 
$(K_2, G_1^{\prime\prime}, \delta_2/2)$-approximate 
cocycle morphism from $(B, \beta)$ to $(A, \alpha)$ such that 
$$
| \tau_{A}(\psi^{\prime}_1(b))- \tau_{B}(b) | < \frac{\delta_2}{2}
$$ 
for any $b\in G_1^{\prime\prime}$.
Furthermore, we have 
$$
\| \psi_1^{\prime}\circ \varphi^{\prime}_1(a) - a\| <\varepsilon_1
$$
and 
\begin{align*}
\| \psi_1^{\prime}\circ\varphi_1^{\prime}(a)(\psi_1^{\prime\sim}(\tilde{u}_g^{(1)})\tilde{v}_g^{(1)}-1)\| 
&= \|\psi_1\circ \varphi_1(a)z_g^{(1)}\alpha_g(w_1)-\psi_1\circ \varphi_1(a)w_1\|< \varepsilon_1
\end{align*}
for any $a\in F_1$ and $g\in \Gamma_1$. 

In a similar way as above, 
applying Corollary \ref{cor:main-section6} to $F_2:=F_1\cup \{x_2, x_2^*\}$, $\Gamma_2$ 
and $\varepsilon_2$, 
we obtain a finite subset $F_2^{\prime}$ of $A$, 
a finite subset $\Gamma_2^{\prime}$ of $\Gamma$ and a strictly positive number 
$\delta_3$ satisfying  
$$
\delta_3 
< \min\left\{\frac{\delta_2}{4+\displaystyle{\max_{b\in G_{1}^{\prime}}\|b\|}},\; \varepsilon_2\right\}.
$$
Put 
$$
K_3:=\{gh \;|\; g, h\in K_2\cup\Gamma_2^{\prime}\},
$$ 
and choose a finite subset $F_2^{\prime\prime}$ of $A$ such that 
$$
F_2\cup F_2^{\prime}\cup \alpha_g(\psi^{\prime}_1(G_1^{\prime\prime}))\tilde{v}_{g}^{(1)} \subset 
F_2^{\prime\prime}
\quad
\text{and} 
\quad
\alpha_g(\tilde{v}_h^{(1)})\in 
\{a+\lambda 1_{A^{\sim}}\; |\; a\in F_2^{\prime\prime}, \lambda\in\mathbb{C}\}
$$
for any $g,h\in K_3$. 
By Corollary \ref{cor:main-section7}, there exists a proper 
$(K_3, F_2^{\prime\prime}, \delta_3/2)$-approximate 
cocycle morphism $(\varphi_2, u^{(2)})$ from $(A, \alpha)$ to $(B, \beta)$ such that 
$$
| \tau_{B}(\varphi_2(a))- \tau_{A}(a) | < \frac{\delta_3}{2}
$$ 
for any $b\in F_2^{\prime\prime}$. 
For any $g\in\Gamma$, set 
$$
z_g^{(2)}:= \varphi_2^{\sim}(\tilde{v}_g^{(1)})u_{g}^{(2)} \in B^{\sim}.
$$  
By a similar argument as above, we see that there exists a unitary element $w_2$ in 
$B^{\sim}$ such that 
$$
\| \varphi_2\circ \psi^{\prime}_1(b)- w_2bw_2^* \| < \varepsilon_1
\quad \text{and} \quad  
\| \varphi_2\circ \psi_1^{\prime}(b)(z_g^{(2)}- w_2\beta_g(w_2^*))\| < \varepsilon_1
$$
for any $b\in G_1$ and $g\in \Gamma_1$.
Put 
$$ 
\varphi_2^{\prime}:= \mathrm{Ad}(w_2^*)\circ \varphi_2 \quad \text{and}
\quad \tilde{u}_g^{(2)}:= w_2^*u_{g}^{(2)}\beta_g(w_2)
$$
for any $g\in\Gamma$. Then 
a similar argument as above shows that 
$(\varphi_2^{\prime}, \tilde{u}^{(2)})$ is 
a proper 
$(K_3, F_2^{\prime\prime}, \delta_3/2)$-approximate 
cocycle morphism from $(A, \alpha)$ to $(B, \beta)$ such that 
$$
| \tau_{B}(\varphi^{\prime}_2(a))- \tau_{A}(a) | < \frac{\delta_3}{2}
$$ 
for any $a\in F_2^{\prime\prime}$, and we have 
$$
\| \varphi_2^{\prime}\circ \psi^{\prime}_1(b) - b\| <\varepsilon_1 \quad 
\text{and} \quad  
\| \varphi_2^{\prime}\circ\psi_1^{\prime}(b)(\varphi_2^{\prime\sim}(\tilde{v}_g^{(1)})\tilde{u}_g^{(2)}-1)\| 
< \varepsilon_1
$$
for any $b\in G_1$ and $g\in \Gamma_1$. Put $G_2:=G_1\cup \{y_2, y_2^*\}$. 
Repeating this process, we obtain 
sequences of proper approximate cocycle morphisms 
$\{(\varphi^{\prime}_n, \tilde{u}_n)\}_{n=1}^{\infty}$ from $(A, \alpha)$ to $(B, \beta)$, 
$\{(\psi^{\prime}_n, \tilde{v}_n)\}_{n=1}^{\infty}$ from $(B,\beta)$ to $(A, \alpha)$ 
and increasing sequences of finite self-adjoint subsets $\{F_n\}_{n=1}^\infty$ of $A$, 
$\{G_n\}_{n=1}^{\infty}$ of $B$ satisfying the assumption of 
Proposition \ref{pro:intertwining}. 
Consequently, $\alpha$ is cocycle conjugate to $\beta$ by 
Proposition \ref{pro:intertwining}. 
\end{proof}

The following corollary is one of the main results in this paper. 

\begin{cor}\label{cor:main1}
Let $A$ and $B$ be simple separable nuclear monotracial C$^*$-algebras, and let $\alpha$ and $\beta$ 
be strongly outer actions of a countable discrete amenable group $\Gamma$ on 
$A$ and $B$, respectively. Then $\alpha\otimes\mathrm{id}_{\mathcal{W}}$ on $A\otimes\mathcal{W}$ 
is cocycle conjugate to $\beta\otimes\mathrm{id}_{\mathcal{W}}$ on $B\otimes\mathcal{W}$.
\end{cor}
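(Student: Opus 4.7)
The plan is to deduce Corollary~\ref{cor:main1} from Theorem~\ref{thm:main2} by showing that both $\alpha\otimes\mathrm{id}_{\mathcal{W}}$ and $\beta\otimes\mathrm{id}_{\mathcal{W}}$, after transport to actions on $\mathcal{W}$ itself, are cocycle conjugate to the model action $\mu^{\Gamma}\otimes\mathrm{id}_{\mathcal{W}}$; the conclusion then follows by transitivity of cocycle conjugacy.

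First, by the classification of simple separable nuclear monotracial $\mathcal{Z}$-stable C$^*$-algebras recalled in the introduction, there is an isomorphism $A\otimes\mathcal{W}\cong\mathcal{W}$, and likewise for $B$. Transporting $\alpha\otimes\mathrm{id}_{\mathcal{W}}$ through such an isomorphism produces an action $\gamma_A$ of $\Gamma$ on $\mathcal{W}$ that is conjugate, and thus cocycle conjugate, to $\alpha\otimes\mathrm{id}_{\mathcal{W}}$; define $\gamma_B$ in the same way. It therefore suffices to verify that $\gamma_A$ and $\gamma_B$ each satisfy the three hypotheses of Theorem~\ref{thm:main2}.

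Next I verify the hypotheses for $\gamma_A$ (the argument for $\gamma_B$ is identical), systematically translating statements about $\gamma_A$ on $\mathcal{W}$ into statements about $\alpha\otimes\mathrm{id}_{\mathcal{W}}$ on $A\otimes\mathcal{W}$. For strong outerness, set $M=\pi_{\tau_A}(A)''$ and $N=\pi_{\tau_{\mathcal{W}}}(\mathcal{W})''$, so that $\pi_{\tau_{A\otimes\mathcal{W}}}(A\otimes\mathcal{W})''=M\,\bar{\otimes}\,N$ and the induced automorphism is $\tilde{\alpha}_g\,\bar{\otimes}\,\mathrm{id}_N$. If a unitary $u\in M\,\bar{\otimes}\,N$ implemented this automorphism for some $g\neq\iota$, then $u$ would commute with $1\otimes N$, hence would lie in $M\,\bar{\otimes}\,Z(N)=M\otimes 1$ since $N$ is a factor, forcing $\tilde{\alpha}_g$ to be inner on $M$ and contradicting strong outerness of $\alpha$. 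Conditions (i) and (ii) are then exactly the content of Proposition~\ref{pro:equivariant-w}.(ii) and Corollary~\ref{cor:main-section4}, after the canonical identification $F(\mathcal{W})^{\gamma_A}\cong F(A\otimes\mathcal{W})^{\alpha\otimes\mathrm{id}_{\mathcal{W}}}$ that arises from conjugacy. For condition (iii), observe that
\[
\mathcal{W}\rtimes_{\gamma_A}\Gamma\;\cong\;(A\otimes\mathcal{W})\rtimes_{\alpha\otimes\mathrm{id}_{\mathcal{W}}}\Gamma\;\cong\;(A\rtimes_{\alpha}\Gamma)\otimes\mathcal{W}.
\]
Since $\alpha$ is strongly outer and $\Gamma$ is amenable, $A\rtimes_{\alpha}\Gamma$ is again simple, separable, nuclear and monotracial, so $(A\rtimes_{\alpha}\Gamma)\otimes\mathcal{W}\cong\mathcal{W}$ by classification; the identity map then furnishes an injective (indeed isometric) homomorphism into $\mathcal{W}$, so (iii) holds trivially.

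With all three conditions in place, Theorem~\ref{thm:main2} gives that $\gamma_A$ and $\gamma_B$ are each cocycle conjugate to $\mu^{\Gamma}\otimes\mathrm{id}_{\mathcal{W}}$, and hence to each other, completing the argument. The genuine work has already been done in Theorem~\ref{thm:main2}; the only new ingredients are the classification-based identification of $A\otimes\mathcal{W}$ and $(A\rtimes_{\alpha}\Gamma)\otimes\mathcal{W}$ with $\mathcal{W}$, and the verification that strong outerness is preserved under tensoring with $\mathrm{id}_{\mathcal{W}}$. The latter is the only step where one must be slightly careful, but the factor argument above makes it routine.
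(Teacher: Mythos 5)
Your proposal is correct and follows essentially the same route as the paper: identify $(A\otimes\mathcal{W})\rtimes_{\alpha\otimes\mathrm{id}_{\mathcal{W}}}\Gamma\cong(A\rtimes_{\alpha}\Gamma)\otimes\mathcal{W}\cong\mathcal{W}$, verify conditions (i) and (ii) via Proposition \ref{pro:equivariant-w} and Corollary \ref{cor:main-section4}, and apply Theorem \ref{thm:main2} to both actions. The only difference is that you spell out the (standard) verification that $\alpha\otimes\mathrm{id}_{\mathcal{W}}$ remains strongly outer, which the paper leaves implicit; your factor argument for that step is correct.
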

\begin{proof}
Note that $(A\otimes\mathcal{W})\rtimes_{\alpha\otimes\mathrm{id}_{\mathcal{W}}}\Gamma$ and 
$(B\otimes\mathcal{W})\rtimes_{\beta\otimes\mathrm{id}_{\mathcal{W}}}\Gamma$ are isomorphic to 
$\mathcal{W}$ by the same reason as 
$(M_{2^{\infty}}\otimes\mathcal{W})\rtimes_{\mu^{\Gamma}\otimes\mathrm{id}_{\mathcal{W}}}\Gamma$. 
Therefore we obtain the conclusion by Proposition \ref{pro:equivariant-w}, 
Corollary \ref{cor:main-section4} and the theorem above. 
\end{proof}

\begin{rem}
(1) There exist uncountably many non-cocycle conjugate strongly outer actions of 
$\mathbb{Z}_2$ on $\mathcal{W}$ by \cite[Example 5.6]{Na0} and \cite[Remark 5.7]{Na0}. 
\ \\
(2) We cannot expect an analog of Suzuki's generalization in \cite{Su} for $\mathcal{W}$. 
Note that no action of a countable discrete non-amenable exact group 
on $\mathcal{W}$ is amenable (or QAP) because $\mathcal{W}$ is monotracial. Indeed, 
if $\alpha$ is an amenable action of a countable discrete exact 
group $\Gamma$ on $\mathcal{W}$, then $\alpha$ induces an amenable action on 
$\{\tau_{\mathcal{W}}\}$. This implies that $\Gamma$ is amenable. 
We refer the reader to \cite{OS} and references given there for details of amenable actions and 
QAP actions. 
\end{rem}

\end{document}